\documentclass[12pt,reqno]{amsart}
 \usepackage{graphicx}
 \usepackage[utf8]{inputenc}
 \usepackage[T1]{fontenc}
 \usepackage{lmodern}
 \usepackage[normalem]{ulem}
 \usepackage{verbatim}
 \usepackage{bbm}
 \usepackage{stmaryrd}
 \usepackage{amsmath}
 \usepackage{amssymb}
 \usepackage{dsfont}
\usepackage{amsthm}

\def\diag{{\rm diag}}

\setlength{\textheight}{22cm}
\setlength{\topmargin}{-.5cm}
\setlength{\evensidemargin}{0cm}
\setlength{\oddsidemargin}{0cm}
\setlength{\textwidth}{16cm}
\usepackage{amsmath}%
\usepackage{amsfonts}%
\usepackage{amssymb}%
\usepackage{graphicx}
\usepackage{dsfont}
\newtheorem{theorem}{Theorem}

\newtheorem{assum}[theorem]{Assumption}

\newtheorem{lemma}[theorem]{Lemma}

\newtheorem{prop}[theorem]{Proposition}
\newtheorem{remark}[theorem]{Remark}

\def\tr{{\rm Tr}}

\def\ra{{\rightarrow}}
\def\bG{{\bf G}}
\def\bW{{\bf W}}
\newcommand{\E}{\mathbb E}

\newcommand{\Pp}{\mathbb P}
\newcommand{\C}{\mathbb C}
\newcommand{\R}{\mathbb R}
\newcommand{\N}{\mathbb N}

\newcommand{\Ss}{\mathbb S}

\newcommand\Car{\mathds{1}}
\renewcommand\phi\varphi 

\def\bX{{\bf X}}
\def\bY{{\bf Y}}
\def\bW{{\bf W}}
\usepackage[svgnames]{xcolor}

\newcommand\jonathan[1]{\color{blue} #1 \color{black}}

\begin{document}
\title{Asymptotics of $k$ dimensional spherical integrals and Applications }

\author{Alice~Guionnet}
\address[Alice Guionnet]{Universit\'e de Lyon, ENSL, CNRS,  France}
\email{Alice.Guionnet@ens-lyon.fr}

\author{Jonathan~Husson}
\address[Jonathan Husson]{Universit\'e de Lyon, ENSL, CNRS,  France}
\email{Jonathan.Husson@ens-lyon.fr}
\thanks{This work was supported in part by ERC Project LDRAM : ERC-2019-ADG Project 884584}

\maketitle
\centerline{\bf Abstract}

In this article, we prove that $k$-dimensional spherical integrals are asymptotically equivalent to the product of $1$-dimensional spherical integrals. This allows us to generalize several large deviations principles in random matrix theory known before only in a one-dimensional case. As examples, 
 we study the universality of the large deviations for $k$ extreme eigenvalues of Wigner matrices (resp. Wishart matrices, resp. matrices with general  variance profiles) with sharp sub-Gaussian entries, as well as large deviations principles  for  extreme eigenvalues of Gaussian  Wigner and Wishart matrices with a finite dimensional perturbation.

\section{Introduction}
Spherical integrals are integrals over the unitary or orthogonal group which can be seen as natural Fourier (or Laplace transforms) over matrices. As such, they play a central role in random matrix theory. They can for instance be used to express the density of the distribution of random matrices \cite{CSZ, Zub}. In the unitary case (and more generally when one integrates over  a compact, connected, semisimple Lie group), Harish-Chandra \cite{Harish} and Itzykson and Zuber \cite{itzyksonzuber} derived formulas for such integrals. However, these formulas do not allow to estimate in general  their asymptotics as the dimension goes to infinity because they are given in terms of a determinant, so a signed sum of diverging terms. It is however crucial to estimate such asymptotics in random matrix theory to derive law of large numbers for matrix models or  large deviations principles.  Such asymptotics also permit to see the $R$-transform as the limit of spherical transforms, and thus of natural Laplace transform in the space of matrices \cite{GuMa05}. Such natural representation was recently generalized to the $S$-transform \cite{pot}.  In the case of a one dimensional spherical integral where one integrates over a uniformly distributed vector on the sphere, such asymptotics where derived in \cite{GuMa05} (see also \cite{GoPa}). The case where the spherical integral is taken over the whole unitary or orthogonal group was adressed in \cite{GZ3}. In the case where the exponent is small enough, and the spherical integral is $k$ dimensional, with $k$ much smaller of the dimension, the spherical integrals was shown to be equivalent to a product of one dimensional spherical integrals when $k$ is finite \cite{GuMa05}, or going to infinity in a mesoscopic regime where $k$ grows like a power of the dimension  \cite{CoSn,humeso}. In this article, we show that when $k$ is finite, this property remains true for all ranges of parameters. Indeed, we prove that the limit of $k$-dimensional spherical integrals is equivalent to the sum of one dimensional integrals which are evaluated at the successive largest eigenvalues. For instance, as foreseen in \cite{MaNaPe}, the limit of a 2-dimensional spherical integral depends on the two largest outliers in the large parameters regime,  and not only the top one. As a consequence, $k$-dimensional spherical integrals  allow us to study the universality of the large deviations for $k$ extreme eigenvalues of Wigner matrices with sharp sub-Gaussian entries, hence generalizing the results of \cite{HuGu}  to finitely many extreme eigenvalues. Similarly, we extend  the universality of large deviations for Wishart matrices \cite{HuGu} and matrices with general variance profile  \cite{Hu} with sharp sub-Gaussian  tails to finitely many extreme eigenvalues.
We also prove 
large deviations principles  for  extreme eigenvalues of Gaussian  Wigner and Wishart matrices with a finite dimensional perturbation. This generalizes the one-dimensional case derived in \cite{Ma07}. The large deviations rate functions of these large deviations principles  simply decompose as a sum of the one dimensional rate functions.

The approach of this paper  differs from the arguments used in \cite{GuMa05} in the one-dimensional case which relied heavily on the representation of the uniform law on the sphere in terms of Gaussian variables. Instead, it is based on considering first spherical integrals of matrices with finitely many different eigenvalues where  the uniform law on the sphere can be easily described by Beta-distribution and where rate functions can be more simply described as maximum over real numbers, see section \ref{dis}. We then generalize our results to matrices with continuous spectrum by density, see section \ref{dif}. Applications to large deviations principles for extreme eigenvalues of random matrices are given in section \ref{appl}.

\section{Statement of the results}\label{statement}
We consider  a $N\times N$ Hermitian matrix $\bX_{N}$  such that the empirical measure of its eigenvalues 
$$\hat\mu_{\bX_{N}}=\frac{1}{N}\sum_{i=1}^{N}\delta_{\lambda_{i}}$$
converges towards a probability measure $\mu$ with support with rightmost point $r_{\mu}$ and leftmost point $l_{\mu}$ which are assumed to be finite.  Let $k,\ell$ be two integer numbers.  Let  $\lambda_{1}^{N}\ge\lambda_{2}^N\ge\cdots \ge\lambda_{k}^{N}\ge r_{\mu}$ be  the  $\ell$ largest outliers  of $\bX_{N}$ counted with multiplicity one, $\lambda_{- 1}^{N}\le\cdots\le \lambda_{ - \ell }^{N}\le l_\mu$  be the smallest outliers of $\bX_{N}$ with multiplicity one (but eventually equal).
Assume that
$$\lim_{N\rightarrow\infty}\lambda_{i}^{N}=\lambda_{i}>r_{\mu} \mbox{ for } i\in [1,k], \lim_{N\rightarrow\infty}\lambda_{-i}^{N}=\lambda_{-i}<l_{\mu} \mbox{ for } i\in [1 ,l]$$
The main result of our paper is the following. Denote by $(e_i
)_{-\ell\le i\le k\atop i\neq 0}$ a family of $k+\ell$ orthonormal eigenvectors following the uniform law on the sphere with radius one, taken with complex coordinates if $\beta=2$ and real coordinates if $\beta=1$.  Then

\begin{prop}\label{SIkdcintro} 
Let $\theta_1\ge \theta_2\ge\cdots \ge \theta_{k} \ge 0\ge\theta_{-\ell}\ge\cdots\ge \theta_{-1}$. Then
$$ \lim_{N\rightarrow\infty} \frac{1}{N} \log \E\left[ \exp \left( \frac{\beta N }{2}\sum_{i=-l \atop i \neq 0}^{k}\theta_{i}\langle e_{i},\bX_{N} e_{i}\rangle
 \right) \right]\qquad\qquad$$
$$\qquad\qquad=\frac{\beta}{2}\left(\sum_{i=1}^{k}
J(\mu,\theta_i,\lambda_{i})+\sum_{i=1}^{\ell}
J(\mu,\theta_{-i},\lambda_{-i})\right)\,.
$$
Here,  $J(\mu, \theta, \lambda)= K\left(\mu,\theta, \lambda, v(\mu,\theta, \lambda)\right)
$
with
$$K(\mu,\theta, \lambda, v)=
\theta\lambda+(v-\lambda)G_\mu(v) -\ln\theta -\int \ln  |v-x|d\mu(x)-1$$
and
$$v(\mu,\theta, \lambda)=\left\lbrace\begin{array}{c l}
\lambda &\mbox{ if }  G_\mu(\lambda)\le \theta,\cr
G_\mu^{-1}(\theta) &\mbox{ if }  G_\mu(\lambda)> \theta.\cr\end{array}\right.$$
$G_\mu$ denotes the Cauchy-Stieltjes transform given, for $z$ outside the support of $\mu$, by  $G_\mu(z)=\int (z-x)^{-1} d\mu(x)$.
\end{prop}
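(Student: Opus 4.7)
I would follow the two-step approach announced in the introduction: first establish the formula when the empirical spectral measure $\hat\mu_{\bX_N}$ has finitely many atoms (the discrete case of Section \ref{dis}), and then extend to a general limiting measure $\mu$ by approximation (the density argument of Section \ref{dif}). The discrete case is tractable because the joint law of the blocks of the Stiefel matrix $E=(e_{-\ell},\ldots,e_{-1},e_{1},\ldots,e_{k})$ relative to the eigenbasis of $\bX_N$ admits an explicit Gaussian representation that is amenable to Laplace's method.

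\textbf{The discrete step.} Write $\bX_N=\sum_a\lambda_a^{(N)}P_a^{(N)}$, where each $\lambda_{\pm i}^{(N)}$ is a simple outlier and the remaining projectors $P_a^{(N)}$ of rank $n_a^{(N)}$ approximate the bulk. Using the construction $E=G(G^{*}G)^{-1/2}$ for a $N\times (k+\ell)$ Gaussian matrix $G$, the integrand
$$\exp\Bigl(\frac{\beta N}{2}\sum_{i}\theta_i\langle e_i,\bX_N e_i\rangle\Bigr)=\exp\Bigl(\frac{\beta N}{2}\sum_{a,i}\theta_i\lambda_a^{(N)}\|P_a^{(N)}e_i\|^{2}\Bigr)$$
becomes a function of the \emph{independent} Gaussian blocks $P_a^{(N)}G$ of size $n_a^{(N)}\times(k+\ell)$. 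After integrating out the unitary/orthogonal invariance of each block, the expectation reduces to a finite-dimensional integral in the matrix of masses $\pi_{a,i}=\|P_a^{(N)}e_i\|^2$, and Laplace's method gives
$$\lim_{N\to\infty}\frac{1}{N}\log\E[\,\cdot\,]=\frac{\beta}{2}\sup_{\pi}\Bigl\{\sum_{a,i}\theta_i\lambda_a^{(N)}\pi_{a,i}-H(\pi)\Bigr\},$$
where $\pi$ ranges over nonnegative matrices with column sums $1$ and $H$ is the explicit entropic penalty coming from the Gaussian normalization and the Stiefel Jacobian.

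\textbf{Decoupling into one-dimensional problems.} The key structural step --- and in my view the main obstacle --- is to show that this variational problem decouples into a sum of independent one-dimensional problems indexed by $i$. Because each outlier $\lambda_{\pm i}^{(N)}$ is a simple eigenvalue, orthonormality forces that at most one $e_i$ can place positive mass on any given outlier at the optimum. Writing the KKT conditions and matching outliers greedily (by decreasing magnitude on the positive side and increasing magnitude on the negative side), the optimizer assigns each $e_j$ either to its outlier $\lambda_j$ (when $\theta_j$ is large enough to overcome the entropy cost) or to a bulk value $v=G_\mu^{-1}(\theta_j)$ (when $G_\mu(\lambda_j)>\theta_j$). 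Performing the column-by-column optimization then reproduces exactly Guionnet--Ma\"ida's one-dimensional formula for $J(\mu,\theta_j,\lambda_j)$, with the two-branch structure in the definition of $v(\mu,\theta,\lambda)$ coming from the dichotomy in the KKT condition: either the outlier saturates the mass allocation, or it is more profitable to spread the mass at $v=G_\mu^{-1}(\theta_j)$.

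\textbf{Density argument.} To conclude, I would discretize a general $\bX_N$ by replacing its bulk eigenvalues by those of a sequence of atomic approximations $\bX_N^{\delta}$ with mesh $\delta$ (keeping the outliers intact), apply the discrete result to $\bX_N^{\delta}$, and pass to the limit $\delta\to 0$ using continuity of $J(\mu,\theta,\lambda)$ in $\mu$, which follows directly from its explicit expression via the Cauchy--Stieltjes transform $G_\mu$. A matrix-level sandwich $\bX_N^{\delta,-}\preceq\bX_N\preceq\bX_N^{\delta,+}$ gives matching upper and lower bounds; a delicate point here is to maintain uniform control in a neighborhood of the critical threshold $G_\mu(\lambda)=\theta$, where the optimizer $v(\mu,\theta,\lambda)$ switches branch, and where the rate function is only $C^{1}$.
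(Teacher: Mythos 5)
Your overall plan (discrete atomic spectrum first, then a density/continuity step) does match the paper's architecture, but the implementation of the discrete step is genuinely different from the paper's, and it contains a gap at precisely the point that matters most.

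You propose to write $E=G(G^*G)^{-1/2}$, reduce the expectation to the masses $\pi_{a,i}=\|P_a^{(N)}e_i\|^2$, and then run Laplace on the joint law of $\pi$. For the \emph{bulk} blocks $a$, whose dimensions $n_a^{(N)}$ tend to infinity, the joint law of the Gram matrices $\Psi^{(a)}=E^*P_a^{(N)}E$ is indeed a matrix--Dirichlet/Jacobi distribution and one can contract to the diagonals (and Hadamard's inequality shows the contraction is saturated on diagonal matrices). But for the \emph{outlier} blocks, which have $n_a^{(N)}=1$, the Gram matrix $\Psi^{(a)}=E^*v_av_a^*E$ is rank one in a $(k+\ell)\times(k+\ell)$ space; it has determinant zero whenever $k+\ell\ge 2$, so the Jacobi density does not apply and your ``entropic penalty $H(\pi)$'' is not obtained by any standard Laplace argument. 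These degenerate blocks are exactly where the content of Proposition~\ref{SIkdcintro} lives, and they also create the coupling constraint $\sum_i\pi_{a,i}\le 1$ across columns; your decoupling claim (``orthonormality forces that at most one $e_i$ can place positive mass on any given outlier at the optimum \ldots matching outliers greedily'') is asserted rather than proved, and it is far from obvious near the threshold $G_\mu(\lambda)=\theta$ where $v(\mu,\theta,\lambda)$ switches branches.

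The paper avoids this entirely by an induction on $k+\ell$: condition on $e_1$, observe that the remaining $e_i$ are uniform in $e_1^\perp$, deflate to $\bX_N^{(e_1)}=P_{e_1^\perp}\bX_NP_{e_1^\perp}$, and use the secular equation $\sum_i\gamma_i(e_1)/(\chi-\eta_i)=0$ to identify the new outliers $\chi_j(\gamma)$ as explicit functions of the Dirichlet vector $\gamma(e_1)$. This reduces everything to the one-dimensional LDP for $\gamma(e_1)$ plus the induction hypothesis, and the decoupling into a sum of $J(\mu,\theta_i,\lambda_i)$ is then obtained by an explicit change of variables ($\gamma\mapsto\bar\gamma$ built from a partial-fraction rational function exploiting the interlacing identities), not from a KKT/greedy argument. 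If you want to pursue your direct Laplace route, the missing ingredient is a rigorous LDP for the joint law of the $\pi_{a,i}$ that treats the rank-one blocks (including the per-outlier trace constraint) correctly, together with a proof that the resulting constrained variational problem decouples column by column. As written, your proposal does not supply either.

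Your density step is essentially right: the paper uses $\beta/2$-Lipschitz continuity of $\frac1N\log\E[\cdot]$ in $\|\bX_N\|_\infty$ plus continuity of $\mu\mapsto J(\mu,\theta,\lambda)$ (established in the Appendix), rather than a two-sided Loewner sandwich, but the effect is the same and the caveat you note about the merely $C^1$ behavior of $J$ across the threshold is exactly what the continuity lemma in the Appendix is designed to handle.
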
 @@
As a first application, we generalize the universality of the large deviations of the largest eigenvalue for Wigner matrices with sharp sub-Gaussian tails \cite{HuGu} to the $k$-th extreme eigenvalues.
We consider a Wigner matrix $\bX_{N}$ with entries $\left(\frac{X_{ij}}{\sqrt{N}}\right)_{1\le i,j\le N}$ where $(X_{ij})_{i\le j}$ are  independent centered variables such that
\begin{equation}\label{eq-cov}
\E[|X_{ij}|^{2}]=1, \quad  i<j\mbox{ and } \E[|X_{ii}|^{2}]=2^{1_{\beta=1}}\end{equation}
where $\beta=1$ if the entries are real, and $\beta=2$ if they are complex. 
In the complex case we assume that the real and the imaginary part of $X_{ij}, 1\le i<j\le N,$ are independent. We moreover assume that the  $X_{ij}$ have sharp sub-Gaussian tails in the sense that 
\begin{equation}\label{eq-ssg}\E[ \exp( \Re(a X_{ij})) ]\le \exp(\frac {|a|^{2}\E[|X_{ij}|^{2}]}{2})\end{equation}
where $a$ is any complex number  in the case  where  $\beta=2$ and  any  real number in the case $\beta=1$. We finally define the following concentration assumption.
\begin{assum} \label{ass} We say that $\bX_{N}$ concentrates if the spectral radius of $\bX_N$, $||\bX_N||$, concentrates as well as the empirical measure $\hat \mu_{\bX_N}$ of its eigenvalues in the following sense. 
First, $\|X_{N}\|$ 
is exponentially tight at the scale $N$:
 \begin{equation} \label{concspradius} \lim_{K \to + \infty} \limsup_{N \to+ \infty} \frac{1}{N} \log \Pp\big(||\bX_N|| > K\big) = - \infty. \end{equation}
Moreover, the empirical distribution of the eigenvalues $\hat \mu_{\bX_N}$ concentrates at the scale $N$:
\begin{equation}\label{concspmeasure} \limsup_{N \to +\infty} \frac{1}{N} \log \Pp\left( d( \hat \mu_{\bX_N}, \sigma) > \varepsilon(N) \right) = - \infty, \end{equation}
for some $\varepsilon(N)$ goes to zero as $N$ goes to infinity, where $d$ is a distance compatible with the weak topology and $\sigma$ is the semi-circle law, defined by
$$ \sigma(dx) = \frac{1}{2\pi}\sqrt{4-x^2} \Car_{|x|\leq 2}dx.$$

\end{assum}
In our previous paper \cite{HuGu} we took $\varepsilon(N)=N^{-\kappa}$. This hypothesis was needed to insure the continuity of spherical integrals according to \cite{Ma07}. However, part of the consequences of our new approach is that spherical integrals enjoy better continuity properties, see the Appendix \ref{app}. 
Assumption \ref{ass} is satisfied by all the matrix models we shall consider below (Wigner, Wishart and variance profile) as soon as the entries satisfy log-Sobolev inequality with uniformly bounded constant or are bounded (see \cite{AGZ} and the Appendix in \cite{HuGu}).
Examples of entries satisfying all our hypotheses (including \eqref{eq-ssg}) are Rademacher variables or uniform variables. We prove  the following universality of the large deviations for the extreme eigenvalues of $\bX_N$:
\begin{theorem}\label{thmuniv} Let $\bX_{N}=(\frac{X_{ij}}{\sqrt{N}})_{{i,j}}$ be a $N\times N$ Hermitian matrix where $(X_{i,j})_{i\le j}$
are centered independent entries  satisfying \eqref{eq-cov} and \eqref{eq-ssg}, as well as such that $\bX_{N}$ satisfies  Assumption \ref{ass}.
Let $\lambda_{1}^N\ge \lambda_{2}^N\ge\cdots\ge\lambda_{N}^N$ be the $N$ eigenvalues of $\bX_{N}$. Let $k$ be a  fixed integer number.
Then the law of $\bar\lambda^N=(\lambda_{1}^N,\lambda_{2}^N,\ldots,\lambda_{k}^N, \lambda_{N-k}^N,\lambda_{N-k+1}^N,\ldots,\lambda_{N}^N)$ satisfies a large deviations principle in the scale $N$ and with  good rate function
$I(x_{1},\ldots,x_{k},x_{-k},\ldots,x_{-1})$  which  is infinite unless $\bar x=(x_1,\ldots,x_k,x_{-k},\ldots,x_{-1})$ satisfies
$$x_{1}\ge x_{2}\ge\cdots\ge  x_{k}\ge2\ge -2\ge  x_{-k}\ge x_{-k+1}\ge \cdots \ge x_{-1}$$
and is given by $$I(x_{1},\ldots,x_{k},x_{-k},\ldots,x_{-1})=\frac{\beta}{2}\left( \sum_{i=1}^{k}\int_{2}^{x_{i}}\sqrt{t^{2}-4}dt+ \sum_{i=-1}^{-k}\int_{2}^{-x_{-i}}\sqrt{t^{2}-4}dt\right)\,.$$
\end{theorem}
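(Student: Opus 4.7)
I would establish the LDP by exponential tilting, combining the $k$-dimensional spherical integrals of Proposition \ref{SIkdcintro} with the sharp sub-Gaussian comparison \eqref{eq-ssg}, thereby extending the one-dimensional argument of \cite{HuGu}. Since the asymptotic formula of Proposition \ref{SIkdcintro} decomposes additively over the $2k$ outliers, and the Legendre-transform optimization decouples term-by-term, the joint rate function will split as the claimed sum of one-dimensional rate functions.

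\textbf{Upper bound.} Fix tilt parameters $\theta_1\ge\cdots\ge\theta_k\ge 0\ge\theta_{-k}\ge\cdots\ge\theta_{-1}$ and an independent orthonormal frame $(e_i)$ uniform on the sphere of the appropriate symmetry class. For a target $\bar x=(x_1,\ldots,x_k,x_{-k},\ldots,x_{-1})$ with $x_i>2$ and $x_{-i}<-2$ strictly, let $E_N(\bar x,\delta)=\{|\lambda_i^N-x_i|<\delta\,\forall i\}$. Entry-by-entry application of the sharp sub-Gaussian bound \eqref{eq-ssg} to the Hermitian form $\tr(\mathbf A\bX_N)$ with $\mathbf A=\sum_i\theta_i e_i e_i^*$, followed by averaging over $(e_i)$, gives
$$\E\!\left[\exp\!\Big(\tfrac{\beta N}{2}\sum_i\theta_i\langle e_i,\bX_N e_i\rangle\Big)\right]\le \exp\!\Big(\tfrac{\beta N}{4}\sum_i\theta_i^2\Big),$$
with equality (asymptotically) for the corresponding Gaussian ensemble. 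On the other hand, Proposition \ref{SIkdcintro} applied on the atypical event $E_N(\bar x,\delta)$ (on which the bulk measure still converges to $\sigma$ by Assumption \ref{ass} and the outliers converge to $\bar x$) yields
$$\E\!\left[\exp\!\Big(\tfrac{\beta N}{2}\sum_i\theta_i\langle e_i,\bX_N e_i\rangle\Big)\mathbf 1_{E_N(\bar x,\delta)}\right]\ge \Pp\!\big(E_N(\bar x,\delta)\big)\exp\!\Big(\tfrac{\beta N}{2}\sum_i J(\sigma,\theta_i,x_i)+o(N)\Big).$$
Dividing the two, $\tfrac{1}{N}\log\Pp(E_N(\bar x,\delta))\le \tfrac{\beta}{2}\sum_i[\theta_i^2/2-J(\sigma,\theta_i,x_i)]+o(1)$. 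Optimizing termwise over $\theta_i$ and using the identity $\sup_{\theta\ge 0}[J(\sigma,\theta,x)-\theta^2/2]=\int_2^x\sqrt{t^2-4}\,dt$ for $x>2$, realized at $\theta=(x+\sqrt{x^2-4})/2$ in the regime $v(\sigma,\theta,x)=x$, produces the LDP upper bound $-I(\bar x)+o(1)$ on small balls, with symmetric treatment for negative indices. Exponential tightness is delivered by \eqref{concspradius} in Assumption \ref{ass}, and the standard covering argument converts the small-ball bounds into a closed-set upper bound.

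\textbf{Lower bound.} For the matching lower bound I would use a change-of-measure argument. Let $(\theta_i^*)$ be the maximizers above. Introduce the tilted probability $d\Pp_{\bar\theta^*}^N=Z_N^{-1}\E_{(e_i)}\!\big[\exp\!\big(\tfrac{\beta N}{2}\sum_i\theta_i^*\langle e_i,\bX_N e_i\rangle\big)\big]\,d\Pp$, with the inner expectation over a uniform orthonormal frame. Because the sharp sub-Gaussian bound is saturated on Gaussians, the normalizing constant satisfies $\tfrac{1}{N}\log Z_N\to \tfrac{\beta}{4}\sum_i(\theta_i^*)^2$, identifying the relative entropy of the tilt with $I(\bar x)$. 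Under $\Pp_{\bar\theta^*}^N$, Proposition \ref{SIkdcintro} combined with the continuity of spherical integrals from Appendix \ref{app} and the concentration hypothesis in Assumption \ref{ass} shows that the extreme eigenvalues concentrate at $\bar x$, so $\Pp_{\bar\theta^*}^N(E_N(\bar x,\delta))\to 1$. The standard change-of-measure inequality then delivers $\tfrac{1}{N}\log\Pp(E_N(\bar x,\delta))\ge -I(\bar x)-o(1)$.

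\textbf{Main obstacle.} The principal new difficulty compared with \cite{HuGu} is the simultaneous handling of several outliers. Proposition \ref{SIkdcintro} is precisely the tool that makes this possible: its rate function depends on the top $k$ (and symmetrically bottom $k$) outliers \emph{separately}, so that a tilt with strictly ordered parameters $\theta_1^*>\cdots>\theta_k^*$ matches each eigenvalue to its distinct target rather than collapsing them onto $x_1$, and the positive- and negative-indexed contributions decouple additively. The most delicate analytic point is to verify that each Legendre optimum is attained in the regime $v(\sigma,\theta,x)=x$ and that the tilted-measure concentration statement is robust; both are handled by the improved continuity properties of spherical integrals proved in Appendix \ref{app}.
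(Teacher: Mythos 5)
Your strategy coincides with the paper's own: tilt by the $k$-dimensional spherical integral, control the annealed tilt from above with sharp sub-Gaussianity, and match with a change-of-measure lower bound. Your upper bound is essentially the paper's argument (small-ball bound, optimize over $\bar\theta$ termwise, then use exponential tightness from \eqref{concspradius}).

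The lower bound, however, has a genuine gap. You assert that ``under $\Pp_{\bar\theta^*}^N$, Proposition~\ref{SIkdcintro} combined with the continuity of spherical integrals from Appendix~\ref{app} and the concentration hypothesis in Assumption~\ref{ass} shows that the extreme eigenvalues concentrate at $\bar x$.'' None of those ingredients describes the law of $\bX_N$ under the tilted measure, so they cannot by themselves locate the tilted outliers: Proposition~\ref{SIkdcintro} is a deterministic limit of a quenched spherical integral for a fixed matrix sequence, and Assumption~\ref{ass} concerns the \emph{original} measure. The heart of the lower bound (and the reason sharp sub-Gaussianity enters) is the analysis of the tilted entry law, which you never carry out. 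The paper proceeds as follows: after conditioning on the frame $(e_i)$, the tilt factorizes over the independent entries $X_{ij}$; on the delocalization event $V_N^\kappa$ each argument $\sum_r\theta_r e_r(i)\bar e_r(j)$ is $O(N^{-1/2-2\kappa})$, so a Taylor expansion of the entry Laplace transforms shows the tilted entries retain covariance $\approx 1/N$ and acquire a deterministic finite-rank mean shift $\approx\sum_i\theta_i e_ie_i^*$; the BBP transition for finite-rank deterministic perturbations then places the tilted outliers at $\theta_i+\theta_i^{-1}$, and choosing $\theta_i^*=\tfrac{1}{2}\bigl(x_i+\sqrt{x_i^2-4}\bigr)$ makes them hit $x_i$. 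Either this BBP step, or an equivalent Legendre-duality argument deducing tilted concentration from the already-proved upper bound and the strict convexity/uniqueness of the optimizer, must be made explicit; as written your lower bound is incomplete. A smaller point: the claim that $\tfrac{1}{N}\log Z_N\to\tfrac{\beta}{4}\sum(\theta_i^*)^2$ ``because the sharp sub-Gaussian bound is saturated on Gaussians'' only gives the upper bound on $Z_N$; the matching lower bound (the paper's \eqref{bia}) again requires the delocalization event plus Taylor expansion, not a reduction to the Gaussian case.
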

Observe that this result is well known in the Gaussian case for the case $k=1$, see \cite[Section 2.6.2]{AGZ} and \cite{BGD}.  The case of more general $k$ but Gaussian entries is a straightforward generalization, see e.g.  \cite{Giulio}. The case of sharp sub-Gaussian entries and $k=1$ was proven in \cite[Theorem 1.4 and Theorem 1.5]{HuGu}. 
This result can also be generalized for Wishart matrices. We consider $\bG_{L,M}$ a $L \times M$ random matrix and set $N = L+ M$. We define the Wishart matrix $\bW_{L,M} = \frac{1}{{M}} \bG_{L,M} \bG^*_{L,M}$. 
If  $L/M$ goes to {$\alpha\le 1$}, it is well known that
the spectral measure of $\bW_{M,L}$ converges towards the Pastur Marchenko distribution
$$d\pi_\alpha(x)=\frac{1}{2\pi \alpha x}\sqrt{(\lambda_+-x)(x-\lambda_-)}dx\,.$$
where {$\lambda_{\pm}=(1 \pm \sqrt{\alpha})^2$}. Then we have the following : 
\begin{theorem}\label{thmuniv2} Let $\bG_{L,M}=(X_{ij})_{{1\le i\le L\atop 1\le j\le M}}$ be a $L\times M$ matrix where $(X_{i,j})_{i, j}$
are centered independent entries  satisfying \eqref{eq-cov} and \eqref{eq-ssg}, as well as such that $\bW_{{L,M}}$ satisfies Assumption \ref{ass}.
Let $k \geq 0$ and $\lambda^N_1 \geq ... \geq \lambda^N_k$ the $k$ largest eigenvalues of $\bW_{L,M}$. Assume that there exists {{$\alpha \le1$ }} and $\kappa \geq 0$ so that  $L/M - \alpha = o(N^{- \kappa})$. Then $( \lambda^N_1,..., \lambda^N_k)$ satisfies a large deviations principle in the scale $N$ with good rate function $J(x_1,...,x_k)$ which is infinite unless $x_1 \geq ... \geq x_k \geq b_{\alpha}$ and :
\[ J(x_1,...,x_k)=  \frac{\beta}{{4(1 + \alpha)}} \sum_{i=1}^k \int_{\lambda_-}^{x_i} \frac{\sqrt{(y - \lambda_{-})(y- \lambda_+)}}{y} dy \]
\end{theorem}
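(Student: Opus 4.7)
The plan is to adapt the tilt-and-spherical-integral strategy of \cite{HuGu} (which handled the one-dimensional Wishart LDP) to the joint law of the top $k$ eigenvalues, with Proposition \ref{SIkdcintro} taking the place of the single one-dimensional spherical-integral asymptotic used there. Assumption \ref{ass} together with $L/M\to\alpha$ gives exponential tightness of $\|\bW_{L,M}\|$ at scale $N$ and concentration of $\hat\mu_{\bW_{L,M}}$ towards the Marchenko--Pastur law $\pi_\alpha$, so it suffices to establish matching upper and lower bounds on small balls around fixed targets $x_1\ge\cdots\ge x_k\ge\lambda_+$; the rate condition $L/M-\alpha=o(N^{-\kappa})$ is needed to invoke the continuity of spherical integrals (Appendix \ref{app}) in $\pi_\alpha$ uniformly on such events.

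For the upper bound, fix tilts $\theta_1\ge\cdots\ge\theta_k\ge 0$ and let $e_1,\ldots,e_k$ be orthonormal vectors uniform on the sphere. Writing $A_\varepsilon=\{|\lambda_i^N-x_i|<\varepsilon,\ i\le k\}$, one has
$$\E\Bigl[\exp\Bigl(\tfrac{\beta N}{2}\sum_i\theta_i\langle e_i,\bW_{L,M}e_i\rangle\Bigr)\Bigr]\;\ge\;\Pp(A_\varepsilon)\cdot\inf_{\bW\in A_\varepsilon}\E_e\Bigl[\exp\Bigl(\tfrac{\beta N}{2}\sum_i\theta_i\langle e_i,\bW e_i\rangle\Bigr)\Big|\,\bW\Bigr].$$
Applied conditionally on $A_\varepsilon$, Proposition \ref{SIkdcintro} identifies the inner expectation as $\exp\bigl(\tfrac{\beta N}{2}\sum_i J(\pi_\alpha,\theta_i,x_i)+o_\varepsilon(N)\bigr)$. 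For the outer expectation, a Hubbard--Stratonovich linearization of each quadratic form $\|\bG_{L,M}^*e_i\|^2$ through $k$ auxiliary Gaussian vectors turns the integrand into an exponential linear in the entries $X_{ij}$; sharp sub-Gaussianity \eqref{eq-ssg} then dominates it by the corresponding Gaussian-Wishart expectation, which by orthogonal invariance evaluates explicitly to $\exp(NF_\alpha(\bar\theta)+o(N))$ for an explicit $F_\alpha$. Rearranging gives
$$\Pp(A_\varepsilon)\;\le\;\exp\Bigl(NF_\alpha(\bar\theta)-\tfrac{\beta N}{2}\sum_iJ(\pi_\alpha,\theta_i,x_i)+o_\varepsilon(N)\Bigr),$$
and the Legendre optimization over $\bar\theta$ decouples coordinate-by-coordinate into the one-dimensional Wishart computation of \cite{HuGu}, yielding exactly the sum form of $J(x_1,\ldots,x_k)$ announced in the statement.

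For the lower bound, we use the comparison argument of \cite{HuGu}: the Gaussian Wishart model admits an explicit joint eigenvalue density (Laguerre $\beta$-ensemble), whose LDP for the top $k$ eigenvalues is a direct extension of the $k=1$ case of \cite{Ma07} and has the same rate $J$; sharp sub-Gaussianity then transfers this lower bound to the sub-Gaussian model via a small Gaussian rank-$k$ perturbation in favorable directions whose exponential cost matches the Gaussian rate.

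The main difficulty is the joint Gaussian domination in the upper bound: the Hubbard--Stratonovich linearization has to be carried out simultaneously for $k$ orthonormal directions with $k$ distinct tilts, which a priori couples all coordinates. It is precisely Proposition \ref{SIkdcintro} that decouples the resulting Legendre problem into $k$ independent one-dimensional optimizations tied to the individual deviating eigenvalues $x_1,\ldots,x_k$, producing the explicit sum form of the rate function.
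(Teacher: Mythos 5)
There is a genuine gap in your sketch, and it is precisely the step the paper singles out as the new difficulty in the Wishart case. After applying the sharp sub-Gaussian bound to the annealed spherical integral, what remains is \emph{not} a deterministic quantity obtainable "by orthogonal invariance'': it is an expectation over the uniform orthonormal frame $(e_1,\dots,e_k)$ of a functional that depends on the Gram matrix $\Psi^{(1)}_{ij}=\langle e_i^{(1)},e_j^{(1)}\rangle$ of the projections onto the first $L$ coordinates (with $\Psi^{(2)}=I_k-\Psi^{(1)}$). Because $\bW_{L,M}$ (or its linearization $\bY_N$) has a block structure, the spherical integral is not invariant under the full orthogonal/unitary group, and $\Psi^{(1)}$ is \emph{not} concentrated at the deterministic value $\alpha' I_k$ at the exponential scale $N$: it has its own large deviations. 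The paper handles this by identifying the law of $\Psi^{(1)}$ as a Jacobi ensemble (Lemma~\ref{Jacobi}, via a variant of Collins' result), extracting a full LDP for $\Psi^{(1)}$, and optimizing $f(M)-I(M)$ over positive contractions $M$, then showing the optimizer is diagonal so the problem decouples. Your outline skips this entirely; without it, the function $F_\alpha(\bar\theta)$ you posit has no justification and the decoupling does not follow.

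Two further points of divergence. First, you propose a Hubbard--Stratonovich linearization of the quadratic forms $\|\bG_{L,M}^*e_i\|^2$; the paper avoids any auxiliary Gaussian fields by passing to the linearized $N\times N$ Hermitian block matrix $\bY_N$, for which $\langle e_i,\bY_N e_i\rangle$ is already linear in the entries of $\bG_{L,M}$ — this is cleaner and keeps the subsequent Jacobi analysis tractable, whereas controlling the auxiliary integrals uniformly over the sphere vectors is not straightforward. Second, your lower bound argument (``transfer from the Gaussian Laguerre ensemble via a rank-$k$ Gaussian perturbation'') is not what \cite{HuGu} does and is not justified here: the paper's lower bound again tilts by the $k$-dimensional spherical integral, restricts to delocalized sphere vectors on a set $V_N^\kappa$ chosen \emph{conditionally independent} of $\Psi^{(1)}$, and then identifies the outliers of $\bY_N$ under the tilted law via a BBP-type determinant equation and local laws. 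The conditional independence of $V_N^\kappa$ from $\Psi^{(1)}$ is itself a nontrivial point (it uses the polar-type decomposition $U=(\Psi^{(1)})^{1/2}V$) and is needed so that restricting to delocalized vectors does not distort the $\Psi^{(1)}$-LDP.
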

As in the Wigner case, as soon as the entries satisfy log-Sobolev inequality or are compactly supported, insuring that the empirical measure converges towards the Pastur-Marchenko distribution with probability larger than any exponential and that the norm of Wishart matrices stays bounded, yielding a property similar to Assumption \ref{ass}  for $\bW_{L,M}$.

This result can be further extended to Wigner matrices with variance profiles. Those matrices are built by letting $\bX^\sigma_N(i,j) = \sigma_N(i,j) \frac{X_{i,j}}{\sqrt{N}}$ where : 
\begin{itemize}
	\item either there exists $p \in \N$, $\alpha_1(N),...,\alpha_p(N)>0$ such that $\sum_1^p \alpha_i(N)=N$ and  $\lim \alpha_i(N)/N = \alpha_i >0 $,  and $(\sigma_{i,j})_{i,j} \in M_{p,p}(\R^+)$, {$\sigma=\sigma^T$}, such that :
	\[ \sigma_N(i,j)=  \sum_{k,l =1}^p \sigma_{k,l} \mathds{1}_{I^k_N \times I^l_N}(i,j) \]
	where $I^1_N = [0, \alpha_1(N)]$ and $I^{i+1}_N = [\sum_{j=1}^i \alpha_j(N) + 1 , \sum_{j=1}^{i+1} \alpha_j(N) ]$. This case will be called the piecewise constant case with parameters $\sigma$ and $\alpha$. 
	\item either $\sigma_N(i,j) = \sigma(i/N,j/N)$ where $\sigma$ is a continuous symetric positive function of $[0,1]^2$. This case will be called the continuous case. 
\end{itemize}
We will also make the following assumption on the variance profiles :
\begin{assum}\label{Neg}
	\begin{itemize}
		\item In the piecewise constant case, we assume that the quadratic form $\psi \mapsto \sum_{i,j}^p \sigma^2_{i,j} \psi_i \psi_j$ is negative on the subspace $Vect(1,...,1)^{\bot}$. 
		\item In the continuous case, we assume that the the function $\psi \mapsto \int \sigma^2(x,y) d \psi(x) d \psi(y)$ is concave on the set $\mathcal{P}([0,1])$ of probability measures on $[0,1]$. 
		\end{itemize}
	\end{assum}
When this Assumption as well as Assumption \ref{concspmeasure}  and \eqref{eq-ssg} are verified, one of the author of this article \cite{Hu} proved that the largest eigenvalue of $\bX_N^\sigma$ satisfies a large deviations principle with a good rate function $J^{(1)}_\sigma$. In this article we generalize this result to the $k$th largest eigenvalues and prove  the following theorem :

\begin{theorem} \label{thmuniv3} Let $\bX^{\sigma}_{N}=(\frac{\sigma_{N}(i,j)X_{ij}}{\sqrt{N}})_{{i,j}}$ be a $N\times N$ Hermitian matrix where $(X_{i,j})_{i\le j}$
are centered independent entries  satisfying \eqref{eq-cov} and \eqref{eq-ssg}, as well as such that $\bX_{N}^{\sigma}$ satisfies  Assumption \ref{ass} {and such that $\sigma$ verifies Assumption \ref{Neg}}.
Let $k \geq 0$ and $\lambda^N_1 \geq ... \geq \lambda^N_k$  be the $k$ largest eigenvalues of $\bX_N^\sigma$. Then $( \lambda^N_1,..., \lambda^N_k)$ satisfies a large deviations principle in the scale $N$ with good rate function $J_\sigma^{(k)}(x_1,...,x_k)$ which is infinite unless $x_1 \geq ... \geq x_k \geq r_{\sigma}$ and in this case equals:
\[ J_\sigma^{(k)}(x_1,...,x_k)=   \sum_{i=1}^k J_\sigma^{(1)}(x_i) \]	
where $r_{\sigma}$ is the rightmost point of the support of the limit $\mu_{\sigma}$ of the empirical measure of $\bX_N$ and where $J^{(1)}_\sigma$ is the rate function for the large deviations of the largest eigenvalue. 
\end{theorem}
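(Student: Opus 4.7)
The proof follows the strategy of the $k=1$ large deviations of \cite{Hu}, upgraded using Proposition \ref{SIkdcintro}: the additivity of the limit of the $k$-dimensional spherical integral drives the additive decomposition $J_\sigma^{(k)}=\sum J_\sigma^{(1)}$.

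\textbf{Upper bound.} For $x_1\geq\cdots\geq x_k\geq r_\sigma$ and $\theta_1\geq\cdots\geq\theta_k\geq 0$, introduce
\[ I_k^N(\bX_N^\sigma,\theta) := \E_{\mathrm{Haar}}\Bigl[\exp\Bigl(\tfrac{\beta N}{2}\textstyle\sum_{i=1}^k \theta_i\langle e_i, \bX_N^\sigma e_i\rangle\Bigr)\Bigr]. \]
A direct computation shows $\partial_\lambda J(\mu,\theta,\lambda)\geq 0$ (zero on $\lambda\leq G_\mu^{-1}(\theta)$, strictly positive past it), so on the event where Assumption \ref{ass} places $\hat\mu_{\bX_N^\sigma}$ near $\mu_\sigma$ and $\lambda_i^N\geq x_i$, Proposition \ref{SIkdcintro} yields, after a standard discretization of the target eigenvalues, the pathwise lower bound
\[ \tfrac{1}{N}\log I_k^N(\bX_N^\sigma,\theta) \geq \tfrac{\beta}{2}\sum_{i=1}^k J(\mu_\sigma,\theta_i,x_i)-o(1). \]
On the other hand, by Fubini and the sharp sub-Gaussian bound \eqref{eq-ssg} applied entry-wise,
\[ \E[I_k^N(\bX_N^\sigma,\theta)] \leq \E_{\mathrm{Haar}}\exp(NQ(e;\theta,\sigma)),\quad Q = \tfrac{\beta}{4N}\sum_{u,v}\sigma_N(u,v)^2 \Bigl|\sum_i\theta_i e_i(u)\bar e_i(v)\Bigr|^2. \]
Expanding in $(i,j)$ produces diagonal contributions ($i=j$) coinciding with the $k=1$ quantity of \cite{Hu}, plus cross-terms ($i\neq j$) given by the quadratic form of $\sigma^2$ evaluated on the vectors $\bigl(\sum_{u\in \text{block}_r} e_i(u)\bar e_j(u)\bigr)_r$ whose entries sum to $\langle e_j,e_i\rangle=0$. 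Assumption \ref{Neg} thus makes each cross-term non-positive. Bounding the Haar integral by $\exp(N\sup Q)$ and relaxing orthonormality to individual unit-norm constraints yields $\limsup \tfrac{1}{N}\log\E[I_k^N(\bX_N^\sigma,\theta)] \leq \sum_i R(\theta_i,\sigma)$, with $R(\cdot,\sigma)$ the Gaussian exponent from \cite{Hu}. Combining via Markov and optimizing each $\theta_i$ separately, the identity $J_\sigma^{(1)}(x)=\sup_{\theta\geq 0}[\tfrac{\beta}{2}J(\mu_\sigma,\theta,x)-R(\theta,\sigma)]$ of \cite{Hu} gives
\[ \limsup_{N\to\infty}\tfrac{1}{N}\log \Pp(\lambda_i^N\geq x_i,\,1\leq i\leq k)\leq -\sum_{i=1}^k J_\sigma^{(1)}(x_i). \]

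\textbf{Lower bound.} Let $(\theta_i^\star, e_i^\star)$ be optimizers of each one-dimensional variational problem $J_\sigma^{(1)}(x_i)$, chosen with the $e_i^\star$ mutually orthogonal — possible for fixed $k$ since each block has diverging size and the one-dimensional optimizers only prescribe macroscopic mass distributions. Tilt the law of $\bX_N^\sigma$ by $\exp\bigl(\tfrac{\beta N}{2}\sum_i \theta_i^\star\langle e_i^\star,\bX_N^\sigma e_i^\star\rangle\bigr)/Z_N$; Proposition \ref{SIkdcintro} pins $\log Z_N = \tfrac{\beta N}{2}\sum_i J(\mu_\sigma,\theta_i^\star,x_i)+o(N)$. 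Under this tilted measure, a coordinate-wise adaptation of the $k=1$ analysis of \cite{Hu} shows $\lambda_i^N\to x_i$ in probability, and the Radon--Nikodym derivative supplies the matching lower bound.

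\textbf{Main obstacle.} The decisive step is the cancellation of cross-terms in $Q$ through Assumption \ref{Neg}: this is what preserves additivity from one to many outliers and is the essential use of sharp sub-Gaussianity beyond the $k=1$ case. For the continuous variance profile, a discretization/density argument (analogous to the one used in Section \ref{dif} of the paper) is needed to reduce to the piecewise constant setting of Assumption \ref{Neg}. For the lower bound, the subtle points are handling potential coalescence $x_i=x_{i+1}$ and the possibility that individual optimizers $e_i^\star$ concentrate on the same blocks, both of which are controlled by combining Proposition \ref{SIkdcintro} with a small perturbation of the orthogonal frame.
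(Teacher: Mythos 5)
Your overall strategy (tilting by $k$-dimensional spherical integrals, and using Assumption \ref{Neg} to kill the cross-terms $i\neq j$ so that the annealed exponent decouples) matches the paper's proof. The cross-term cancellation observation is exactly right: for $i\neq j$ the block sums of $e_i(u)\bar e_j(u)$ add up to $\langle e_j,e_i\rangle=0$, and negativity of the quadratic form on that hyperplane makes the term non-positive.

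However, there is a genuine gap in your upper bound for the annealed spherical integral. You bound
\[
\E_{\mathrm{Haar}}\big[\exp(NQ)\big]\;\le\;\exp\big(N\sup_e Q\big)
\]
and claim this yields $\sum_i R(\theta_i,\sigma)$. But $\sup_e Q$ corresponds to $\sup_\psi \tfrac{\beta\theta^2}{4}\langle\psi, R\psi\rangle$ (after decoupling), whereas the correct limit of $\tfrac1N\log\E_{\mathrm{Haar}}[\exp(NQ)]$ is the \emph{entropy-penalized} variational quantity
\[
\Lambda^\sigma(\theta)=\frac{\beta}{2}\,\sup_{\psi\in S}\Big[\frac{\theta^2}{2}\langle\psi, R\psi\rangle+\sum_{i=1}^p\alpha_i\log\frac{\psi_i}{\alpha_i}\Big].
\]
The Haar measure concentrates the block mass vector $\psi$ near $(\alpha_1,\dots,\alpha_p)$ and assigns exponentially small mass to other profiles, so the crude $\sup Q$ bound is strictly too large in general (it drops the non-positive entropy term). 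In the Wigner case of the previous subsection this subtlety disappears because the Gaussian computation for the annealed integral is exact, but for variance profiles it is unavoidable. To obtain the correct exponent one needs the \emph{joint} large deviations principle for the block Gram matrices $\Psi^{(1)},\dots,\Psi^{(p-1)}$ under the Haar measure, which the paper establishes by proving a multi-block extension of Collins' Jacobi-ensemble density (so that Varadhan/Laplace can be applied with the explicit rate $I((M_i))=-\tfrac\beta2\sum_i\alpha_i\log\det M_i - C$). Your proposal skips this ingredient entirely; without it the upper bound does not close, since the optimization $\sup_\theta[\tfrac\beta2 J(\mu_\sigma,\theta,x)-\Lambda^\sigma(\theta)]$ with the too-large $\Lambda$ gives a rate function strictly smaller than $J^{(1)}_\sigma(x)$.

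A secondary omission: in the lower bound, after tilting, the paper must show that each prescribed level $x_i$ is hit by a \emph{unique} $\theta_i$, which is done by reducing the determinantal outlier equation $\det(I_p-\theta R D(\theta,z))=0$ to $\theta\rho(\theta,z)=1$ and showing monotonicity in $z$; this step again uses Assumption \ref{Neg} via a signature argument. Your phrase ``coordinate-wise adaptation of the $k=1$ analysis'' sweeps this under the rug, and the potential for multiple roots is precisely where it could break without Assumption \ref{Neg}.
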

	This result was proved in the case $k=1,\ell=0$ in \cite[Theorem 6]{GuMa05}.
Let us now  consider  $\bX_{N}$  to be a GOE/GUE matrix, that is a $N\times N$ Hermitian matrix with centered real/complex Gaussian entries satisfying \eqref{eq-cov}. 
Let $\ell$ and $k$ be two integer numbers  and let   $(e_{1},\ldots e_{k}, e_{-1},\ldots,e_{-\ell})$ be  orthonormal vectors following the uniform law on the sphere. In \cite{Ma07}, Mylene Ma\"{\i}da showed that the largest eigenvalue of a Gaussian Wigner matrix perturbed by a rank one matrix satisfy a large deviations principle. In this article we generalize this result to the $k$th largest eigenvalues and $\ell$ smallest eigenvalues when the Gaussian matrix is perturbed by a finite rank matrix with $k$ non-negative eigenvalues and $\ell$ non-positive eigenvalues. 
\begin{prop}\label{Wigper} Let $\bX_N$ be a GUE ($\beta=2$) or GOE ($\beta=1$) matrix. Let  $\ell,  k$ be  two finite integer numbers.
Let $\theta_1\ge \theta_2\ge\cdots \ge \theta_{k}\ge  0\ge\theta_{-\ell}\ge\cdots\ge \theta_{-1}$
and define
$$\bX_{N}^{\theta}=\bX_{N}+\sum_{-\ell\le i\le k\atop i\neq 0}\theta_{i} e_{i}e_{i}^{*}\,.$$
Let $\lambda_{1}^{N,\theta}\ge \lambda_{2}^{N,\theta}\ge\cdots\ge \lambda_{N}^{N,\theta}$ be the  eigenvalues of $\bX_{N}^{\theta}
$.  Then, the  distribution  of $(\lambda_{1}^{N,\theta}, \ldots,\lambda_{k}^{N,\theta}, \lambda_{N-\ell}^{N,\theta},\ldots,\lambda_{N}^{N,\theta})$ satisfies a large deviations principle in the  scale $N$  and  with good rate function which  is infinite unless $$x_{1}\ge x_{2}\ge\cdots\ge  x_{k}\ge 2\ge -2\ge x_{-\ell}\ge \cdots \ge x_{-1}$$
and is given then by $\beta \sum_{-\ell\le i\le k\atop i\neq 0} I_{\theta_{i}}(x_{i})$.
Here, with  $I(y)=\frac{1}{4} x^{2}-\int\ln|x-y|d\sigma(y)$, we have set
$$I_{\theta}(x)=I(x)-J(\sigma, \theta,x)-\inf_{y}(I(y)-J(\sigma, \theta,y))\,.$$
\end{prop}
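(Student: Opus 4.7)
The plan is to reduce to a deterministic diagonal perturbation, write the eigenvalue density explicitly, and tilt the classical GOE/GUE extreme-eigenvalue LDP using the asymptotics of the $(k+\ell)$-dimensional spherical integral furnished by Proposition~\ref{SIkdcintro}.

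By unitary (resp.\ orthogonal) invariance of the GOE/GUE in law, together with the independence of the uniform frame $(e_i)$ from $\bX_N$, the eigenvalues of $\bX_N^\theta$ have the same joint distribution as those of $\bX_N + D$, where $D=\diag(\theta_1,\dots,\theta_k,0,\dots,0,\theta_{-\ell},\dots,\theta_{-1})$. Starting from the Gaussian density $\propto e^{-\frac{\beta N}{4}\tr((Y-D)^2)}$, expanding the cross term and then performing Weyl integration followed by integration over the unitary/orthogonal group, the joint density of the eigenvalues of $\bX_N+D$ is
\[ p_\theta(\lambda)\;\propto\;\prod_{i<j}|\lambda_i-\lambda_j|^\beta\,e^{-\frac{\beta N}{4}\sum\lambda_i^2}\,e^{-\frac{\beta N}{4}\tr(D^2)}\,\mathcal I_N(\Lambda,D), \]
where $\mathcal I_N(\Lambda,D)=\int e^{\frac{\beta N}{2}\tr(U\Lambda U^*D)}\,dU$ is exactly the $(k+\ell)$-dimensional spherical integral treated in Proposition~\ref{SIkdcintro}. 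Equivalently, writing $P_0$ for the unperturbed eigenvalue law, the Radon--Nikodym derivative reads $dP_\theta/dP_0=\mathcal I_N(\Lambda,D)/\E_0[\mathcal I_N]$, with $\E_0[\mathcal I_N]=e^{\frac{\beta N}{4}\tr(D^2)}$ by a direct Gaussian moment computation (or, more elegantly, by translation invariance of Lebesgue measure).

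Since $\bX_N$ is GOE/GUE, Assumption~\ref{ass} is automatic: $\hat\mu_{\bX_N}$ converges to the semicircle $\sigma$ exponentially fast at scale $N$ and its extreme eigenvalues concentrate at $\pm 2$. On the event where the $k+\ell$ extreme eigenvalues are close to prescribed values $x_i$, Proposition~\ref{SIkdcintro} yields
\[ \frac{1}{N}\log\mathcal I_N(\Lambda,D)\;\longrightarrow\;\frac{\beta}{2}\sum_i J(\sigma,\theta_i,x_i). \]
Combined with the classical LDP at scale $N$ for the $k+\ell$ extreme eigenvalues of the GOE/GUE, with rate function $\beta\sum_i\bigl(I(x_i)-\inf I\bigr)$ (see \cite[Section~2.6.2]{AGZ}, \cite{BGD}, \cite{Giulio}), a Varadhan-type tilt delivers an LDP at scale $N$ for the vector $(\lambda^{N,\theta}_1,\dots,\lambda^{N,\theta}_k,\lambda^{N,\theta}_{N-\ell},\dots,\lambda^{N,\theta}_N)$. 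The pre-normalisation rate is obtained by subtracting the spherical-integral asymptotics from the unperturbed rate; since the perturbation contribution decouples across $i$, the non-negativity normalisation is performed term by term, yielding the advertised additive expression $\beta\sum_i I_{\theta_i}(x_i)$ on the constraint region $x_1\geq\cdots\geq x_k\geq 2\geq -2\geq x_{-\ell}\geq\cdots\geq x_{-1}$.

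The delicate step is making the tilt rigorous, because one must insert the asymptotics of $\mathcal I_N$ into both the upper and lower LDP bounds rather than at a single spectrum. The lower bound requires continuity of $(\hat\mu_{\bX_N},\lambda^N_i)\mapsto\frac{1}{N}\log\mathcal I_N$ in a topology compatible with the concentration of the spectrum, which is precisely the content of the improved continuity estimates worked out in Appendix~\ref{app}. The upper bound requires a uniform exponential upper bound on $\mathcal I_N(\Lambda,D)$ that persists when the extreme eigenvalues range over a compact window; such an estimate already appears in the proof of Proposition~\ref{SIkdcintro}. Finally, exponential tightness of $\|\bX_N\|$ in the Gaussian case together with the quadratic growth of $I$ at infinity yields the goodness of the rate function.
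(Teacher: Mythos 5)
Your argument coincides with the paper's: both pass from the matrix density to the joint eigenvalue density of $\bX_N+D$, recognise that the Radon--Nikodym derivative with respect to the unperturbed GOE/GUE eigenvalue law is exactly the $(k+\ell)$-dimensional spherical integral $\mathcal I_N(\Lambda,D)$, invoke the concentration of $\hat\mu_{\bX_N}$ on the semicircle, and conclude via Propositions~\ref{SIkdcintro}--\ref{SIkdc} together with Varadhan's lemma. The only additions in your write-up (the explicit evaluation $\E_0[\mathcal I_N]=e^{\frac{\beta N}{4}\tr D^2}$ and the remark that the tilt must be justified separately in the upper and lower bounds using the Appendix's uniform continuity) are consistent with and implicit in the paper's short proof, so this is essentially the same route.
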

A similar result holds for Wishart matrices.  We next consider a $L\times M$ matrix $\bG_{L,M}$ with i.i.d standard Gaussian matrices with covariance $1$, set $N=M+L$, and assume without loss of generality that $M\ge L$.  We consider the Wishart matrix
$$\bW_N^\gamma=\frac{1}{M}\Sigma_L^{1/2}\bG_{L,M}\bG_{L,M}^* \Sigma_L^{1/2}$$
where $\Sigma_L$ is a $L\times L$ covariance matrix given by $I_L+\sum_{i=1}^k \gamma_i e_i e_i^*$ for some fixed $\gamma_i>-1$.  Here the $e_i, 1\le i\le k$ are orthonormal vectors. It is well known that  when $L/M$ goes to $\alpha\in [0,1]$, the empirical measure of the eigenvalues of $\bW_N^\gamma$ goes to the Pastur-Marhenko distribution. 

Large deviations for the extreme eigenvalues in the case $\gamma_i=0$ are well known, and similar to the Gaussian case, see \cite{AGZ,Majum2}. The rate function governing the large deviations {in the scale $N$} for the smallest eigenvalue is infinite outside $[0,\lambda_-]$ and is given for $y\in [0,\lambda_-]$ by
\begin{eqnarray*}I_\alpha(y)&=&\frac{\beta}{4{(1+\alpha)}}( y-(1-\alpha)\ln y-2\alpha\int\ln |y-t|d\pi_\alpha(y)-C)\\
&=&\frac{\beta}{4{(1+\alpha)}} \int^{\lambda_-}_y\frac{1}{ t}\sqrt{(t-\lambda_+)(t-\lambda_-)} dt\end{eqnarray*}
where $C$ is the infimum of $y-(1-\alpha)\ln y-2\alpha\int\ln |y-t|d\pi_\alpha(y)$. The same result holds for the largest eigenvalue. We have the following analogue to Proposition \ref{Wigper}.
\begin{prop}\label{Wisper}Let  $\ell\le k$ be  two finite integer numbers.
Let $\gamma_1\ge \gamma_2\ge\cdots \ge \gamma_{\ell}\ge  0\ge\gamma_{\ell+1}\ge\cdots\ge \gamma_{k}> -1$.
Let $\lambda_{1}^{N,\gamma}\ge \lambda_{2}^{N,\gamma}\ge\cdots\lambda_{M}^{N,\gamma}$ be the  eigenvalues of $\bW_{N}^\gamma
$ in decreasing order.  Then, the  law of $(\lambda_{1}^{N,\gamma}, \ldots,\lambda_{\ell}^{N,\gamma}, \lambda_{N-k+\ell}^{N,\gamma},\ldots,\lambda_{N}^{N,\gamma})$ satisfies a large deviations principle in the  scale $N$  and  with good rate function which  is infinite unless $$x_{1}\ge x_{2}\ge\cdots\ge  x_{\ell}\ge \lambda_+\ge \lambda_-\ge x_{\ell+1}\ge \cdots \ge x_{k}\ge 0$$
and is given otherwise by $ \sum_{i=1}^{k} I_{\gamma_{i},\alpha}(x_{i})$.
Here,  we have set
$$I_{\gamma,\alpha}(x)=I_\alpha(x)-\frac{\beta}{2} J(\pi_\alpha, \frac{\gamma}{1-\gamma},x)-\inf_{y}(I_\alpha(y)-\frac{\beta}{2}J(\pi_\alpha, \frac{\gamma}{1-\gamma},y))$$
\end{prop}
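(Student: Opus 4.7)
The strategy I would follow parallels the proof of the Wigner analogue (Proposition \ref{Wigper}): realize the spiked Wishart as an exponentially tilted version of the unperturbed Wishart, apply Proposition \ref{SIkdcintro} to identify the logarithmic asymptotics of the tilt, and then conclude via Varadhan's lemma.

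\textbf{Step 1 (change of measure).} First I would write $\bW_N^\gamma=\frac{1}{M}\Sigma_L^{1/2}\bG_{L,M}\bG_{L,M}^{*}\Sigma_L^{1/2}$ and perform the substitution $\tilde\bG=\Sigma_L^{1/2}\bG_{L,M}$. The density of $\tilde\bG$ differs from the standard Gaussian one by the Radon--Nikodym factor
\[ \det(\Sigma_L)^{-\beta M/2}\exp\!\Bigl(\tfrac{\beta M}{2}\sum_{i=1}^{k}\tfrac{\gamma_i}{1+\gamma_i}\,\langle e_i,W_0\, e_i\rangle\Bigr),\]
where $W_0=\frac{1}{M}\tilde\bG\tilde\bG^{*}$ is an unperturbed Wishart matrix. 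Hence, for any bounded measurable function $F$ of the ordered eigenvalues,
\[\E_\gamma[F(\lambda(\bW_N^\gamma))]=\det(\Sigma_L)^{-\beta M/2}\,\E_0\bigl[F(\lambda(W_0))\,I_N(\lambda(W_0))\bigr],\]
where, with $\theta_i:=\gamma_i/(1+\gamma_i)$, I set $I_N(\Lambda):=\E\bigl[\exp\bigl(\tfrac{\beta M}{2}\sum_i\theta_i\langle \tilde e_i,\Lambda\tilde e_i\rangle\bigr)\bigm|\Lambda\bigr]$. Rotational invariance under $\Pp_0$ makes the $\tilde e_i=U^{*}e_i$ Haar-distributed orthonormal vectors independent of $\Lambda$, so $I_N$ is exactly a $k$-dimensional spherical integral in dimension $L$ of the type controlled by Proposition \ref{SIkdcintro}.

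\textbf{Step 2 (spherical integral asymptotics).} Next I would apply Proposition \ref{SIkdcintro} at dimension $L$, whose empirical spectral measure converges almost surely to the Marchenko--Pastur law $\pi_\alpha$, with effective parameters $\theta_i M/L\to\theta_i/\alpha$. Combined with the rescaling $L/N\to\alpha/(1+\alpha)$, this gives the pointwise limit
\[ \frac{1}{N}\log I_N(\Lambda)\longrightarrow\frac{\beta}{2}\sum_{i=1}^{k}\bar J(\pi_\alpha,\gamma_i,\lambda_i),\]
along configurations where the relevant extreme eigenvalues of $W_0$ converge to $\lambda_i$; here $\bar J$ is the one-dimensional functional obtained from $J$ by the above rescaling, matching the $J$-term that appears in the statement of the proposition.

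\textbf{Step 3 (Varadhan).} Under $\Pp_0$, the joint distribution of the top $\ell$ and bottom $k-\ell$ eigenvalues of the unperturbed Gaussian Wishart $W_0$ is known to satisfy a large deviations principle at speed $N$ with good rate function $\sum_i I_\alpha(x_i)$, as recalled before the statement. Combining this LDP with the exponential tilt $I_N$ via Varadhan's lemma then yields a large deviations principle for $(\lambda_1^{N,\gamma},\ldots,\lambda_\ell^{N,\gamma},\lambda_{N-k+\ell}^{N,\gamma},\ldots,\lambda_N^{N,\gamma})$ with rate function
\[\sum_i\Bigl(I_\alpha(x_i)-\tfrac{\beta}{2}\bar J(\pi_\alpha,\gamma_i,x_i)\Bigr)-\inf_{y_1,\ldots,y_k}\sum_i\Bigl(I_\alpha(y_i)-\tfrac{\beta}{2}\bar J(\pi_\alpha,\gamma_i,y_i)\Bigr).\]
Since both the rate function and the tilt split as sums over $i$, the infimum decouples into $k$ independent one-dimensional minimizations, collapsing the rate function to $\sum_{i=1}^{k}I_{\gamma_i,\alpha}(x_i)$, as claimed.

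\textbf{Main obstacle.} The delicate step is promoting the pointwise convergence of $I_N(\Lambda)$ to the uniform control required by Varadhan's lemma. This demands exponential tightness on the scale $N$ of the Wishart eigenvalues, which is automatic in the Gaussian setting via log-Sobolev arguments, together with continuity of the spherical integral with respect to its spectral argument, which is precisely the improved continuity furnished by the new approach of the present paper (see the Appendix). The simultaneous treatment of positive and negative $\gamma_i$, which produce outliers on both sides of the Marchenko--Pastur bulk, is directly accommodated by Proposition \ref{SIkdcintro}, which allows $\theta_i$ of either sign.
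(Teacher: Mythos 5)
Your proposal follows essentially the same route as the paper: express the spiked Wishart as the unperturbed Wishart tilted by a $k$-dimensional spherical integral, then apply the spherical-integral asymptotics and Varadhan's lemma with the known LDP for the unperturbed case; the paper carries this out at the level of the joint eigenvalue density, whereas you perform the equivalent change of measure at the matrix level, but the two are the same argument.

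One point worth flagging: your Sherman--Morrison computation correctly gives $\Sigma_L^{-1}=I_L-\sum_i\frac{\gamma_i}{1+\gamma_i}e_ie_i^*$, so the tilt parameter should be $\theta_i=\frac{\gamma_i}{1+\gamma_i}$. The paper writes $\Sigma^{-1}=I+\sum_i\frac{\gamma_i}{1-\gamma_i}e_ie_i^*$ and correspondingly $J(\pi_\alpha,\tfrac{\gamma}{1-\gamma},x)$ in the rate function, which appears to be a sign slip (the two agree only at $\gamma=0$). Your derivation of the parameter is the correct one; you should just be aware that your rate function will therefore read $J(\pi_\alpha,\tfrac{\gamma}{1+\gamma},x)$ rather than matching the letter of the stated Proposition \ref{Wisper}.
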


We finally notice that since our results hold for any number of eigenvalues, they capture as well the large deviations for the point processes of the outliers. For instance, if we let $A_{i}=[a_{i},b_{i}]$ be intervals above the bulk, $b_{i}<a_{i+1}<b_{i+1}$, if we denote $I$ the large deviation rate function for
 any of the above models, the probability that there are $n_{i}$ outliers in the set $A_{i}$ has probability of  order $\exp\{-N\sum_{i} n_{i }\inf_{A_{i}} I\}$.

{\bf Acknowledgements} We are very grateful to Myl\`ene Ma\"{\i}da and Marc Potters for preliminary discussions about the  questions addressed in this paper.

\section{Limiting spherical integral in the discrete case}\label{dis}
We  first  consider the case where $\bX_N$ has finitely many different eigenvalues :
$$\bX_N= diag\left( \underbrace{\eta_{-m_{1}+1},...,\eta_{-m_{1}+1}}_{N_{-m_{1}+1}  },\underbrace{\eta_{-m_{1}+2},...,\eta_{-m_{1}+2}}_{N_{-m_{1}+2}  },..., \underbrace{\eta_{p+m_{2}},...,\eta_{p+m_{2}} }_{N_{p+m_{2}} }\right),$$ where $\eta_{-m_{1}+1} < ... < \eta_1\cdots<\eta_p < \eta_{p+1}<\cdots < \eta_{p+m_{2}}$, and $\eta_i$ has multiplicity $N_i$  where $\sum_{i=-m_{1}+1}^{p+m_{2}} N_i=N $. We  assume that $N_{i}/N$ goes to a positive limit  $\alpha_{i}$ for {$i\in \{1, p\}$} and to zero for $i\in\{1-m_{1},\ldots,0\}\cup\{ p+1,\ldots, p+m_{2}\}$, the later representing the outliers of $\bX_{N}$. $m_{1}, m_{2},p$ are independent of $N$ {(with the convention that if $\alpha_i =0$, $\alpha_i \ln \alpha_i =0$)}. 
In the previous notations, the eigenvalues of $\bX_N$ are given by $\lambda_1^N\ge \lambda_2^N\ge \cdots\ge \lambda_N^N$ with 
$$\lambda_i^N=\eta_j, i\in I_j=[N_{p+m_2}+\cdots+N_{p+m_2-j}+1, N_{p+m_2}+\cdots+N_{p+m_2-j}+N_{p+m_2-j -1}]$$
and $I_{p+m_2}=[1,N_{p+m_2}]$. 

\begin{remark}\label{SICont}
	{ We notice that if the sequences $N_i$ are fixed, the spherical integrals are $\beta/2$-Lipschitz in the $p+m_1+m_2$-uplet $(\eta_i)_{-m_1 +1 \leq i \leq p+m_2}$ with the norm $||.||_{\infty}$ }.
	\end{remark}

\subsection{Limiting 1-d spherical integral}
We start by giving a new proof of \cite[Theorem 6]{GuMa05} giving the asymptotics of spherical integrals in the one dimensional case, in the case of matrices with $p+m_1+m_2$ different eigenvalues with multiplicity as above. This proof will in fact extend to the higher dimensional setting in the next subsection. 
\begin{prop}\label{SI1d}
Let $\theta\ge 0$. Then
$$ \lim_{N\rightarrow\infty} \frac{1}{N} \log \E\left[ \exp \left( \frac{\beta N }{2}\theta \langle e , \bX_N e\rangle \right) \right]=\frac{\beta}{2}
\sup_{\gamma_i\ge 0\atop \sum_{i=1-m_1}^{p+m_2}\gamma_i=1}\left\lbrace{\theta\sum_{i=-m_1+1}^{p+m_2} \eta_i \gamma_i+\sum_{i=1}^p \alpha_i \ln\frac{\gamma_i}{\alpha_i}}\right\rbrace
$$
\end{prop}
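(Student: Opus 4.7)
The plan is to exploit the block structure of $\bX_N$. Since the eigenspaces of $\bX_N$ partition the coordinates into blocks $I_i$ with $|I_i|=N_i$, one has $\langle e,\bX_N e\rangle=\sum_i \eta_i\,\gamma_i^N$, where $\gamma_i^N:=\|e|_{I_i}\|^2$ is the squared mass of $e$ on the $i$th eigenspace. Writing $e=g/\|g\|$ for a standard real (resp.\ complex) Gaussian vector $g$, it is classical that $(\gamma_i^N)$ is Dirichlet-distributed with parameters $(\beta N_i/2)_i$. The expectation in question is therefore the explicit finite-dimensional integral
\[
\E\!\left[\exp\!\Big(\tfrac{\beta N}{2}\theta\,\langle e,\bX_Ne\rangle\Big)\right]
=\frac{\Gamma(\beta N/2)}{\prod_i \Gamma(\beta N_i/2)}\int_{\Delta} \exp\!\left(\tfrac{\beta N}{2}\theta\sum_i \eta_i \gamma_i\right)\prod_i \gamma_i^{\beta N_i/2 -1}\,d\gamma,
\]
with $\Delta=\{\gamma_i\ge 0,\ \sum_i \gamma_i=1\}$ the simplex of size $p+m_1+m_2$.

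I would then apply Laplace's method to the integral on the compact simplex. The log-integrand divided by $N$ converges uniformly on compact subsets of the relative interior to
\[
\Phi(\gamma):=\tfrac{\beta \theta}{2}\sum_i \eta_i \gamma_i + \sum_{i=1}^p \tfrac{\beta \alpha_i}{2}\log \gamma_i,
\]
because outlier multiplicities $N_i$ stay bounded so $\tfrac{N_i}{N}\log \gamma_i=o(1)$; the function $\Phi$ is concave (linear plus concave log), which makes the standard Laplace upper and lower bounds routine. Stirling's formula gives in parallel
\[
\tfrac{1}{N}\log\frac{\Gamma(\beta N/2)}{\prod_i \Gamma(\beta N_i/2)}\ \longrightarrow\ -\tfrac{\beta}{2}\sum_{i=1}^p \alpha_i\log\alpha_i,
\]
the outlier factors contributing $o(1)$ since $\log\Gamma(\beta N_i/2)=O(1)$ for bounded $N_i$, and the $\log(\beta/2)+\log N$ pieces cancelling thanks to $\sum_i\alpha_i=1$. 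Adding these two limits and factoring $\beta/2$ out recombines $\log\gamma_i-\log\alpha_i$ into $\log(\gamma_i/\alpha_i)$ and yields exactly the right-hand side of the proposition.

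The main point that deserves care is the treatment of the outlier coordinates $i$ with $\alpha_i=0$: they carry no entropic penalty in $\Phi$, so the supremum is in general attained on the boundary of $\Delta$ where several outlier $\gamma_i$ vanish. For the Laplace lower bound one approximates such a boundary maximizer by an interior perturbation; at the perturbed point the outlier contribution to the log-integrand is only $O(\log N)=o(N)$, so the matching with $\sup_\Delta\Phi$ is preserved at the exponential scale. For the upper bound one must also check that the integrable singularities $\gamma_i^{\beta N_i/2-1}$ (which are unbounded when $\beta N_i/2<1$) do not spoil the asymptotics; this follows from $\int_0^\varepsilon t^{c-1}dt=\varepsilon^{c}/c$, showing that any boundary layer $\{\gamma_i<\varepsilon\}$ contributes $O(1)$ to the log-integral and hence $o(N)$. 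Once these two points are settled, the argument is elementary, and, as the authors emphasize, it extends almost verbatim to the $k$-dimensional spherical integral by replacing the Dirichlet law on a simplex with the joint law of the squared projections of a uniformly random orthonormal frame onto the eigenspaces of $\bX_N$.
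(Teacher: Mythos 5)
Your proposal is correct and takes essentially the same route as the paper: both reduce $\langle e,\bX_N e\rangle$ to a Dirichlet-distributed weight vector $(\gamma_i^N)$ and extract the asymptotics by Laplace's method, the paper phrasing the last step as an LDP for $\gamma^N$ plus Varadhan's lemma while you perform the Laplace analysis (with Stirling for the normalization) directly on the explicit integral. The care you give to the outlier coordinates with $\alpha_i=0$ — the interior perturbation for the lower bound and the $O(1)$ boundary-layer estimate for the upper bound — is precisely the content hidden behind the paper's terse ``direct consequence of Laplace's method.''
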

\begin{proof}
We have the following formula   : 
\[ \langle e, \bX_N e \rangle = \sum_{i=-m_{1}+1}^{p+m_{2}} \eta_i \gamma_i^N  \]
where we have denoted 
$ \gamma_j^N = \sum_{i \in I_j} |u_i|^2$
with $u_i=\langle v_i, e\rangle$ if $v_i$ is the eigenvector for the $i$-th eigenvalue of $\bX_N$. In other words, $\gamma_j^N$ is the $\ell^2$-norm of the projection of $e$ onto the eigenspace of $\eta_j$. 
The vector $\gamma^N$ follows a Dirichlet law of parameters $\frac{\beta}{2} ( N_{1-m_1} , \ldots, N_{p+m_{2}} )$, that is the distribution on 
$\Sigma=\{x\in [0,1]^{m_{1}+m_{2}+p}:\sum_{i=1-m_{1}}^{m_{2}+p}x_{i}=1\}$ given by $\gamma_{1-m_1}^N=1-\sum_{i=2-m_1}^{p+m_2} \gamma_i^N$ and 

\begin{equation}\label{ff}
d\mathbb P^{N}_{\bf N}(\gamma)=\frac{1}{Z_{\alpha}^{N}}{\bf 1}_{\sum_{i=2-m_1}^{p+m_2} \gamma_i\le 1} (1-\sum_{i=2-m_1}^{p+m_2} \gamma_i)^{\frac{\beta}{2}N_{1-m_1} -1}
\prod_{j=2-m_{1}}^{p+m_{2}} \gamma_{j}^{\frac{\beta}{2}N_{i} -1}{\bf 1}_{\gamma_j\ge 0} d\gamma_{j}\end{equation}
We deduce the following large deviations principle
\begin{theorem}\label{LDP1} Assume that $N_i/N$ converges towards $\alpha_i$ for all $i$, with $\alpha_i=0$ for $i\notin [1,p]$.
Then, the law of $\gamma^N$ satisfies a large deviations principle with scale $N$ and good rate function  $I_\alpha$ given for $x\in \Sigma$ by
\[ I_{\alpha}(x_{1-m_{1}},...,x_{p+m_{2}}) =  \frac{\beta}{2} \sum_{i=1}^{p} \alpha_i \log \frac{x_i}{\alpha_{i}} \,.\]
\end{theorem}
The proof is   a direct consequence of Laplace's method. 
We deduce Proposition \ref{SI1d} by Varadhan's lemma. 
\end{proof}
\begin{lemma} \label{formulaI}For $\theta\ge 0$, $\eta=(\eta_{1-m_1}<\cdots <\eta_{p+m_2})$ and $\alpha_i\in(\mathbb R^+)^p$ such that $\sum_{i=1}^p\alpha_i=1$, 
$$J(\theta,\eta)=\sup_{\gamma_i\ge 0\atop \sum\gamma_i=1}\left\lbrace{\theta\sum_{i=-m_1+1}^{p+m_2} \eta_i \gamma_i+\sum_{i=1}^p \alpha_i \ln\frac{\gamma_i}{\alpha_i}}\right\rbrace$$ only depends on $\eta_{p+m_2},\theta$ and $\mu= \sum_{i=1}^p\alpha_i\delta_{\eta_i}$. It is given by
$$J(\theta,\eta)= J(\mu, \theta, \eta_{p+m_2})= K\left(\mu,\theta, \eta_{p+m_2}, v(\mu,\theta, \eta_{p+m_2})\right)
$$
with
$$K(\mu,\theta, \lambda, v)=
\theta\lambda+(v-\lambda)G_\mu(v) -\ln|\theta| -\int \ln  |v-x|d\mu(x)-1$$
and
$$v(\mu,\theta, \lambda)=\left\lbrace\begin{array}{c l}
\lambda &\mbox{ if }  G_\mu(\lambda)\le \theta,\cr
G_\mu^{-1}(\theta) &\mbox{ if }  G_\mu(\lambda)> \theta.\cr\end{array}\right.$$
\end{lemma}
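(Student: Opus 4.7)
The strategy is to reduce the supremum defining $J(\theta,\eta)$ to a variational problem over the $p$ bulk coordinates, and then solve it by Lagrange multipliers, recovering exactly the two branches of $v(\mu,\theta,\lambda)$. Since $\alpha_i=0$ for $i\notin[1,p]$, the logarithmic term only involves $\gamma_1,\ldots,\gamma_p$, while for the outlier indices $\gamma_i$ enters only linearly through $\theta\eta_i\gamma_i$. Setting $s:=\sum_{i=1}^p\gamma_i$, the remaining mass $1-s\ge 0$ must be distributed over the outlier indices, and because $\theta\ge 0$ and $\eta_{p+m_2}$ is the largest $\eta_i$, the supremum is attained by placing all this remaining mass at $i=p+m_2$, i.e.\ $\gamma_{p+m_2}=1-s$ and $\gamma_i=0$ for the other outlier indices. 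This already shows the answer depends only on $\theta$, $\lambda:=\eta_{p+m_2}$, and $\mu=\sum_{i=1}^p\alpha_i\delta_{\eta_i}$, not on the individual outliers.

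The reduced objective, strictly concave in $(\gamma_1,\ldots,\gamma_p)$ on $\{\gamma_i\ge 0,\ s\le 1\}$, is
\[
\theta\lambda+\theta\sum_{i=1}^p(\eta_i-\lambda)\gamma_i+\sum_{i=1}^p\alpha_i\ln(\gamma_i/\alpha_i).
\]
Introducing a multiplier $\nu\ge 0$ for the constraint $s\le 1$, stationarity yields $\gamma_i=\alpha_i/(\theta(v-\eta_i))$ with $v:=\lambda+\nu/\theta\ge\lambda$. Two regimes appear. If the constraint is inactive ($\nu=0$, $v=\lambda$), then $s=G_\mu(\lambda)/\theta$, which is compatible with $s\le 1$ precisely when $G_\mu(\lambda)\le\theta$. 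If the constraint is active, $\sum\gamma_i=1$ forces $G_\mu(v)=\theta$, i.e.\ $v=G_\mu^{-1}(\theta)$, and $v\ge\lambda$ holds iff $G_\mu(\lambda)\ge\theta$ by strict monotonicity of $G_\mu$ on $(\eta_p,\infty)$. These two regimes match exactly the piecewise definition of $v(\mu,\theta,\lambda)$, and in both cases $v>\eta_p$ so the forthcoming integrals are finite and the absolute values in $\ln|v-x|$ are superfluous.

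To conclude, substitute $\gamma_i=\alpha_i/(\theta(v-\eta_i))$ back into the objective. Using $\sum_{i=1}^p\alpha_i=1$ together with the elementary identity $\eta_i-\lambda=-(v-\eta_i)+(v-\lambda)$, one obtains
\[
\sum_{i=1}^p\frac{(\eta_i-\lambda)\alpha_i}{v-\eta_i}=-1+(v-\lambda)G_\mu(v),
\]
which yields $J(\theta,\eta)=\theta\lambda+(v-\lambda)G_\mu(v)-\ln\theta-\int\ln(v-x)\,d\mu(x)-1$, exactly $K(\mu,\theta,\lambda,v(\mu,\theta,\lambda))$. I expect the only mildly delicate point to be the clean justification of the case distinction in the KKT step: strict concavity of the objective ensures that each critical point is the global maximum in its regime, so nothing beyond checking that the two cases $G_\mu(\lambda)\lessgtr\theta$ exhaust all possibilities is required.
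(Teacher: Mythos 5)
Your proof is correct and follows essentially the same route as the paper: reduce to the bulk coordinates by concentrating the residual mass at $\eta_{p+m_2}$, solve the resulting concave optimization over the simplex (the paper finds the critical point and then checks whether the constraint $\sum_{i\le p}\gamma_i\le 1$ binds; you phrase this more systematically via a KKT multiplier $\nu$ with $v=\lambda+\nu/\theta$), and read off the two regimes according to whether $G_\mu(\lambda)\lessgtr\theta$. The only difference is presentational: you make the Lagrangian bookkeeping and the final substitution into $K$ explicit, whereas the paper states the two optimizers and asserts "this gives the announced formula."
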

\begin{proof}
$J(\theta,\eta)$ is the  supremum of
\begin{equation}\label{defI}I_{\theta,\eta}^{p+m_2}(\gamma):=
\theta\sum_{i=-m_1+1}^{p+m_2} \eta_i \gamma_i+\sum_{i=1}^p \alpha_i \ln\frac{\gamma_i}{\alpha_i}\,.\end{equation}
The entropic term in $I_{\theta,\eta}^{p+m_2}$
 does not depend on  $(\gamma_i, i<1 \mbox{ or } i>p)$, and the first term increases when we take them all equal to zero except $\gamma_{m_2+p}$. Hence, the maximum is taken at $\gamma_i=0$ for $i<1$ or $i\in [p+1,p+m_2-1]$.
 Then, putting $\gamma_{p+m_2}=1-\sum_{i=1}^p \gamma_i$ we see that we need to optimize

$$I_{\theta,\eta}(\gamma)=\theta\eta_{p+m_2}+\left\lbrace{\theta\sum_{i=1}^{p} (\eta_i -\eta_{p+m_2})\gamma_i+\sum_{i=1}^p \alpha_i \ln\frac{\gamma_i}{\alpha_i}}\right\rbrace$$
over $\gamma_i\ge 0, \sum_{i=1}^p \gamma_1\le 1$. We see that the critical point of $I_{\theta,\eta}(\gamma)$ is
$$\gamma^*_i=\frac{\alpha_i}{\theta (\eta_{p+m_2}-\eta_i)}, 1\le i\le p, \,\gamma_{p+m_{2}}^*=1-\sum_{i=1}^p \gamma_i^*=1-\frac{1}{\theta}G_{\mu}(\eta_{p+m_{2}})$$
provided $\sum_{i=1}^p \gamma_i^*=\frac{1}{\theta}G_\mu(\eta_{p+m_2})\le 1$. 
  For $\theta<G_\mu(\eta_{p+m_2})$, the supremum is achieved at 
$$\gamma^{**}_i=\frac{\alpha_i}{\theta (G_\mu^{-1}(\theta)-\eta_i)},1\le i\le p, \,\gamma_{p+m_{2}}^{**}=0$$
This gives  the announced formula.
\end{proof}

\subsection{Limiting 2-d spherical integral}
We next consider the bi-dimensional case where 
 $(e,f)$ are  two orthonormal vectors following the uniform law in the sphere.
\begin{prop}\label{SI2d}
Let $\theta_1\ge \theta_2\ge 0$. Then, if $N_{p+m_{2}}=1$,
$$ \lim_{N\rightarrow\infty} \frac{1}{N} \log \E\left[ \exp \left( \frac{\beta N }{2}(\theta_1 \langle e , \bX_N e\rangle + \theta_2 \langle f, \bX_N f \rangle ) \right) \right]\qquad\qquad$$
$$\qquad\qquad=\frac{\beta}{2}(J(\mu,\theta_1,\eta_{p+m_2})+J(\mu,\theta_2,\eta_{p+m_2-1}))\,.
$$
If $N_{m+p_{2}}\ge  2$,
$$ \lim_{N\rightarrow\infty} \frac{1}{N} \log \E\left[ \exp \left( \frac{\beta N }{2}(\theta_1 \langle e , \bX_N e\rangle + \theta_2 \langle f, \bX_N f \rangle ) \right) \right]\qquad\qquad$$
$$\qquad\qquad=\frac{\beta}{2}(J(\mu,\theta_1,\eta_{p+m_2})+J(\mu,\theta_2,\eta_{m+p_2}))\,.
$$

\end{prop}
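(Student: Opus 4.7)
The plan is to iterate the one-dimensional argument of Proposition~\ref{SI1d} by conditioning on $e$. Writing $\bX_N^e := P_{e^\perp}\bX_N P_{e^\perp}$ and using $\langle f,\bX_N f\rangle = \langle f,\bX_N^e f\rangle$ since $f\in e^\perp$, we express the 2d integral as $\E_e[e^{\beta N\theta_1\langle e,\bX_N e\rangle/2}\,F(e)]$ with $F(e):=\E_f[e^{\beta N\theta_2\langle f,\bX_N^e f\rangle/2}\mid e]$. Applying Proposition~\ref{SI1d} to the $(N-1)$-dimensional compression $\bX_N^e$ (whose piecewise-constant spectrum differs from that of $\bX_N$ only by a rank-one perturbation) yields $N^{-1}\log F(e) = \tfrac{\beta}{2} J(\hat\mu_{\bX_N^e},\theta_2,\lambda_1(\bX_N^e)) + o(1)$; uniformity in $e$ comes from Remark~\ref{SICont} together with the interlacing $\hat\mu_{\bX_N^e}\to\mu$.

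Let $c:=\gamma^e_{p+m_2}=\|P_{p+m_2}e\|^2$. If $N_{p+m_2}\ge 2$, the top eigenspace always meets $e^\perp$ nontrivially, so $\lambda_1(\bX_N^e)=\eta_{p+m_2}$ for every $e$; the factor $\exp(\tfrac{\beta N}{2}J(\mu,\theta_2,\eta_{p+m_2}))$ factors out of the outer integral and a second application of Proposition~\ref{SI1d} yields the claimed formula. If $N_{p+m_2}=1$, then $\lambda_1(\bX_N^e)=\eta_{p+m_2}$ iff $c=0$, and for $c>0$ it is the largest root $\Lambda\in[\eta_{p+m_2-1},\eta_{p+m_2})$ of the secular equation $c/(\eta_{p+m_2}-z)=\sum_{j<p+m_2}\gamma^e_j/(z-\eta_j)$. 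Substituting and applying the Dirichlet-based Laplace method of Proposition~\ref{SI1d} to the outer integral reduces the problem to computing
$$\frac{\beta}{2}\sup_{\gamma\in\Sigma}\Bigl\{\theta_1\sum_j\eta_j\gamma_j+J(\mu,\theta_2,\Lambda(\gamma))+\sum_{j=1}^p\alpha_j\log\tfrac{\gamma_j}{\alpha_j}\Bigr\}.$$

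The key claim is that this supremum equals $J(\mu,\theta_1,\eta_{p+m_2})+J(\mu,\theta_2,\eta_{p+m_2-1})$, attained at the one-dimensional $e$-optimum $\gamma^*$ of Lemma~\ref{formulaI}. Plugging $\gamma^*_j=\alpha_j/[\theta_1(\eta_{p+m_2}-\eta_j)]$ and $\gamma^*_{p+m_2}=1-G_\mu(\eta_{p+m_2})/\theta_1$ into the secular equation and using partial fractions collapses it to $\theta_1=G_\mu(z)$, giving $\Lambda(\gamma^*)=\max(G_\mu^{-1}(\theta_1),\eta_{p+m_2-1})$. The hypothesis $\theta_1\ge\theta_2$ then places $\Lambda(\gamma^*)$ either in the $\lambda$-flat region of $J(\mu,\theta_2,\cdot)$ identified in Lemma~\ref{formulaI} (since $G_\mu^{-1}(\theta_1)\le G_\mu^{-1}(\theta_2)$ and $K(\mu,\theta_2,\lambda,G_\mu^{-1}(\theta_2))$ does not depend on $\lambda$) or exactly at $\eta_{p+m_2-1}$; in both cases $J(\mu,\theta_2,\Lambda(\gamma^*))=J(\mu,\theta_2,\eta_{p+m_2-1})$, and the $e$-piece equals $J(\mu,\theta_1,\eta_{p+m_2})$ by Lemma~\ref{formulaI}, which gives the lower bound on the sup.

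The main obstacle is the matching upper bound, since for generic $\gamma$ the function $\Lambda(\gamma)$ depends implicitly on $\gamma$ through the secular equation. I would address it by splitting into the two regimes $\{c\ge c_{\text{trans}}\}$, where $\Lambda\equiv\eta_{p+m_2-1}$ and the $e$-sup is directly bounded by $J(\mu,\theta_1,\eta_{p+m_2})$ via Lemma~\ref{formulaI}, and $\{c<c_{\text{trans}}\}$, where the rearrangement inequality $J(\mu,\theta_1,\eta_{p+m_2})-J(\mu,\theta_1,\eta_{p+m_2-1})\ge J(\mu,\theta_2,\eta_{p+m_2})-J(\mu,\theta_2,\eta_{p+m_2-1})$ (a consequence of $\partial_\lambda J(\mu,\theta,\lambda)=(\theta-G_\mu(\lambda))_+$ being non-decreasing in $\theta$) handles the boundary case $c=0$ and monotonicity/continuity in $c$ propagates to the interior. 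The piecewise-smooth implicit dependence of $\Lambda(\gamma)$ on $\gamma$ inside Varadhan's lemma is accommodated by a partition argument together with Remark~\ref{SICont}.
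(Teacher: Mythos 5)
Your setup agrees with the paper's: conditioning on $e$, compressing to $\bX_N^{(e)}=P_{e^\perp}\bX_N P_{e^\perp}$, observing that the top eigenvalue $\Lambda(\gamma)$ of the compression is pinned by interlacing to $[\eta_{p+m_2-1},\eta_{p+m_2}]$ and determined by the secular equation, and invoking the Dirichlet large deviations principle (Theorem~\ref{LDP1}) to reduce the problem to the variational expression over $\gamma$. Your lower-bound computation is also essentially the one the paper performs: substituting $\gamma^*$ into the secular equation and cancelling by partial fractions does reduce it to $G_\mu(z)=\theta_1$, and the ordering $\theta_1\ge\theta_2$ plus the fact that $K(\mu,\theta_2,\lambda,G_\mu^{-1}(\theta_2))$ is $\lambda$-independent gives $J(\mu,\theta_2,\Lambda(\gamma^*))=J(\mu,\theta_2,\eta_{p+m_2-1})$ in all regimes (you should also record the case $\theta_1<G_\mu(\eta_{p+m_2})$, where the 1d optimizer is $\gamma^{**}$ with $\gamma^{**}_{p+m_2}=0$ and hence $\Lambda=\eta_{p+m_2}$; the equality still holds because then $J(\mu,\theta_2,\cdot)$ is constant on the whole interval). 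Your treatment of $N_{p+m_2}\ge 2$ is fine.

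The genuine gap is the matching upper bound on the supremum, which you yourself flag as the main obstacle, and your proposed resolution does not go through. First, the claim that $\Lambda\equiv\eta_{p+m_2-1}$ on $\{c\ge c_{\mathrm{trans}}\}$ is false: whenever $\gamma_{p+m_2-1}>0$ the function $z\mapsto\sum_j\gamma_j/(z-\eta_j)$ has poles at both $\eta_{p+m_2-1}$ and $\eta_{p+m_2}$ with positive residues, so its unique zero in the interval lies strictly inside $(\eta_{p+m_2-1},\eta_{p+m_2})$ regardless of how large $c=\gamma_{p+m_2}$ is. Second, the rearrangement inequality you state is correct, but it gives information about the endpoints of the interval, not about the nonlinear coupling of $\Lambda(\gamma)$ with the entropy and linear terms for intermediate $\gamma$; ``monotonicity/continuity in $c$ propagates to the interior'' is a hope rather than an estimate, and it is precisely here that one has to control a supremum over a constraint set whose geometry depends on $\gamma$ through the implicit function $\Lambda(\gamma)$.

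The paper's way out, which your proposal is missing, is a change of variables rather than a partition: setting $\tilde\gamma_i=\gamma_i\,|\chi-\eta_i|^{-1}$ turns the pair of constraints coming from the secular equation and from $\sum_i\gamma_i=1$ into the two relations in \eqref{nm}, which no longer involve $\chi$. After this substitution the functional $I(\chi,\tilde\gamma)$ splits exactly as $H(\chi)$ plus $L(\tilde\gamma)$ with decoupled feasible regions, so the supremum factorizes and one can optimize $H$ over $\chi\in[\eta_{p+m_2-1},\eta_{p+m_2}]$ and $L$ over $\tilde\gamma$ independently, then recombine via \eqref{tq}. This decoupling is the key algebraic observation driving the upper bound (and the subsequent $k$-dimensional induction in Proposition~\ref{SIkd}); without it, the case-splitting strategy you outline does not close.
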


\begin{proof}
We first assume that $N_{p+m_{2}}=1$. We can write : 

$$
\E\left[ \exp \left( N\frac{\beta}{2} \left( \theta_1 \langle e, \bX_N e\rangle + \theta_2 \langle f, \bX_N f \rangle \right) \right)\right] \qquad\qquad$$
$$\qquad \qquad= \E\left[ \exp \left( N\frac{\beta}{2}  \theta_1 \langle e, \bX_N e\rangle\right)\mathbb E\left[\exp\left( \frac{\beta}{2} \theta_2 \langle f, \bX_N^{(e)} f \rangle \right) \bigg| e \right]\right] 
$$
where $\bX_N^{(e)}=P_{e^\bot}\bX_NP_{e^\bot}$ if $P_{e^\bot}=I-ee^*$ is the orthogonal projection onto the ortho-complement of $e$.
 We can see $\bX_N^{(e)}$ as a $(N-1)\times (N-1)$ matrix living in  $Vect(e)^{\bot}$.
Its largest eigenvalue $\chi $ belongs to $[\eta_{p+m_2-1},\eta_{p+m_2}]$ and writing that the corresponding eigenvector $v\in Vect(e)^{\bot}$ satisfies $\bX_N^{(e)}v=\chi  v$ and $\langle e, v\rangle=0$,
we find that $\chi $ belongs to $[\eta_{p+m_2-1},\eta_{p+m_2}]$ must satisfy 
\begin{equation}\label{zx}\sum_{i=1-m_1}^{p+m_2}\frac{\gamma_i(e)}{\chi -\eta_i}=0\end{equation}  if there is  a solution in this interval where $\gamma_i(e)=\sum_{j\in I_i} |\langle v_j,e\rangle|^2$ with $v_j$ the $j$th eigenvector of $\bX_N$. If there is no solution (which can happen only if $\gamma_{p+m_2}(e) =0$) then $\chi = \eta_{p + m_2}$ if the rational function is positive on this interval and $\chi = \eta_{p + m_2 -1}$ if it is negative. 
 Note that $\chi=\chi(\gamma(e))$ is a continuous function of $\gamma(e)$ and denote this function $\chi (\gamma(e))$. Moreover, the spectral measure of $\bX_N^{(e)}$ converges towards $\mu$, the limiting spectral measure of $\bX_N$ by Weyl interlacing property. 
 Therefore, when $\gamma(e)$ converges towards $\kappa$, and since the empirical measure of $\bX_N^{(e)}$ converges toward the same limit that the empirical measure of $\bX_N$,
$$\lim_{N\rightarrow \infty}\frac{1}{N-1} \ln \mathbb E[ e^{\theta_2 N\frac{\beta}{2}\langle f, \bX_N f\rangle}|e]=\frac{ \beta}{2} J(\mu,x, \chi (\kappa)).$$
Moreover, the right hand side depends continuously on $\kappa$ (since $J$ is continuous in $\chi$ and $\chi$ in $\kappa$).
We now can apply the fact that $\gamma^N$ follows a large deviations principle, see Theorem \ref{LDP1}, to conclude that

\begin{multline*}
\lim \frac{1}{N} \log { \E[ \exp \left( N\frac{\beta}{2} \left( \theta_1 \langle e, \bX_N e\rangle + \theta_2 \langle f, \bX_N f \rangle \right) \right) ]} \\=\frac{\beta}{2} \sup_{\gamma\in (\R^{+})^{p+m_1+m_2}, \sum \gamma_i =1} \left(J(\mu,\theta_2, \chi (\gamma)) + \sum_{i=1}^p \alpha_i \log\frac{ \gamma_i }{\alpha_{i}}+\sum_{i=1-m_1}^{p+m_2} \theta_1 \eta_i \gamma_i  \right) 
\end{multline*}
  Since $J$ is bounded and due to the continuity of $\gamma$, we can change the domain of the $\sup$ to $(\R^{+,*})^{p+m_1+m_2}$. 
We next complete the proof by computing the right hand side and showing it equals the sum of the two limiting spherical integrals as stated. We first  denote by  $\tilde \gamma_{i}=\gamma_{i }|\chi -\eta_{i}|^{-1}$ with $\chi =\chi (\gamma)$. By definition we have $\tilde \gamma_{i} > 0$, $\chi \in (\eta_{p+m_2-1},\eta_{p+m_2})$ and \eqref{zx} holds so that

$$\tilde\gamma_{p+m_{2}}=\sum_{i=1-m_{1}}^{p +m_{2}-1}\tilde \gamma_{i}, \qquad \sum_{i=1-m_{1}}^{p+m_{2}-1}(\chi -\eta_{i})\tilde \gamma_{i}+(\eta_{p+m_{2}}-\chi )\tilde\gamma_{p+m_2}=1$$
This simplifies into the condition
\begin{equation}\label{nm}\tilde\gamma_{p+m_{2}}=\sum_{i=1-m_{1}}^{p+m_{2}-1 }\tilde \gamma_{i},\qquad  \eta_{p+m_{2}}\tilde\gamma_{p+m_{2}}- \sum_{i=1-m_{1}}^{p+m_{2}-1}\eta_{i}\tilde \gamma_{i}=1\end{equation}
which is independent of $\chi $. We thus first take the supremum over $\chi \in [\eta_{p+m_2-1},\eta_{p+m_2}]$ of

\begin{eqnarray*}
I(\chi ,\tilde\gamma)&=&J(\mu,\theta_2, \chi ) + \sum_{i=1}^{p}\alpha_i \log [|\eta_{i}-\chi | \frac{\tilde \gamma_i }{\alpha_i} ]-\theta_{1}(\chi -\eta_{p+m_{2}}) \eta_{p+m_{2}}\tilde \gamma_{p+m_{2}}\\
&&+ \sum_{i=1-m_{1}}^{p+m_{2}-1} \theta_1(\chi -\eta_{i}) \eta_i \tilde \gamma_i \\
&=& H(\chi ) + \sum_{i=1}^{p}\alpha_i \log \frac{\tilde \gamma_i }{\alpha_i} +\theta_{1}\eta_{p+m_{2} }^{2}\tilde\gamma_{p+m_{2}}- \sum_{i=1-m_{1}}^{p+m_{2}-1} \theta_1 \eta_i^{2} \tilde \gamma_i  \\
\end{eqnarray*}
with
\begin{equation}\label{defH}H(\chi )= J(\mu,\theta_2, \chi ) +\sum_{i=1}^{p}\alpha_i \log |\eta_{i}-\chi | -\chi \theta_{1}.\end{equation}
Recall the formula for $J$ from Lemma \ref{formulaI}.
When $\theta_{2}\le G_{\mu}(\chi )$, that is $\chi\le G_\mu^{-1}(\theta_2)$,
$J$ does not depend on  $\chi $ and the function $H$ increases till $G_{\mu}^{-1}(\theta_{1})$ and decreases afterwards. 
When $\theta_{2}\ge G_{\mu}
(\chi )$, that is $\chi\ge G^{-1}(\theta_2)$, Lemma \ref{formulaI} gives

\begin{eqnarray*}
H(\chi )&=&\theta_{2}\chi  -\sum_{i=1}^{p}\alpha_{i}\ln(\chi -\eta_{i})-\ln \theta_{2}-1+\sum_{i=1}^{p}\alpha_i \log |\eta_{i}-\chi |-\chi \theta_{1}\\
&=&\chi (\theta_{2}-\theta_{1})-\ln \theta_{2}-1\end{eqnarray*}
which is  decreasing  since $\theta_{1}>\theta_{2}$. Therefore,  $H$ increases till $G_{\mu}^{-1}(\theta_{1})$ and decreases afterwards. As a consequence,
\begin{equation}\label{maxH}\max_{\chi\in [\eta_{p+m_2-1},\eta_{p+m_2}]}
H(\chi)=\left\lbrace
\begin{array}{ll}
H(\eta_{p+m_2-1})& \mbox{ if }  G_\mu^{-1}(\theta_1)\le \eta_{p+m_2-1},\cr
H( G_{\mu}^{-1}(\theta_{1}))&\mbox{ if }  G_\mu^{-1}(\theta_1)\in [\eta_{p+m_2-1},\eta_{m+p_2}],\cr
H(\eta_{p+m_2}) &\mbox{ if }  G_\mu^{-1}(\theta_1)>  \eta_{p+m_2}.\cr\end{array}\right. \end{equation}
Let us also  optimize on $\tilde \gamma$ satisfying \eqref{nm} the function

$$L(\tilde\gamma)= \sum_{i=1}^{p}\alpha_i \log [\frac{\tilde \gamma_i }{\alpha_i}]+\theta_{1}\eta_{p+m_{2} }^{2}\tilde\gamma_{p+m_2}- \sum_{i=1-m_{1}}^{p+m_{2}-1} \theta_1 \eta_i^{2} \tilde \gamma_i \,.$$
Replacing $\tilde\gamma_{p+m_{2}}$ by  $\sum_{i=1-m_{1}}^{p+m_{2}-1}\tilde\gamma_{i}$ we get
$$L(\tilde\gamma)=\sum_{i=1}^{p}\alpha_i \log [\frac{\tilde \gamma_i }{\alpha_i}]+\theta_{1} \sum_{i=1-m_1}^{p+m_2-1} (\eta_{p+m_{2}}^{2} -\eta_i^{2} )\tilde \gamma_i $$
  with by \eqref{nm},  $\sum (\eta_{p+m_{2}}-\eta_{i})\tilde\gamma_{i}=1$. We may  again do the change of variables $\bar\gamma_{i}=(\eta_{p+m_{2}}-\eta_{i})\tilde\gamma_{i}$ which are non negative and with mass one by \eqref{nm}.
  We get by \eqref{nm}
  \begin{eqnarray*}L(\tilde\gamma)&=&\sum_{i=1}^{p}\alpha_i \log [\frac{\bar \gamma_i }{\alpha_i(\eta_{p+m_{2}}-\eta_{i})}]+\theta_{1} \sum_{i=1-m_{1}}^{p+m_{2}-1} (\eta_{p+m_2}+\eta_i )\bar \gamma_i \\
  &=&
  \sum_{i=1}^{p}\alpha_i \log [\frac{\bar \gamma_i }{\alpha_i(\eta_{p+m_2}-\eta_{i})}]+\theta_{1} \sum_{i=1-m_{1}}^{p+m_{2}-1} \eta_i \bar \gamma_i +\theta_{1}\eta_{p+m_{2}}\\
  &=& I_{\theta_1,\eta}^{p+m_2-1}(\bar\gamma)+  \sum_{i=1}^{p}\alpha_i \log [\frac{1}{(\eta_{p+m_2}-\eta_{i})}]+\theta_{1}\eta_{p+m_{2}}
   \end{eqnarray*}
   where 
   $I_{\theta_1,\eta}^{p+m_2-1}$ is defined as in \eqref{defI} with largest outlier $\eta_{p+m_2-1}$. Its maximum gives $J(\mu,\theta_1,\eta_{p+m_2-1})$. 
    We thus get
  $$\max L=
  \sum_{i=1}^{p}\alpha_i \log [\frac{1}{(\eta_{p+m_2}-\eta_{i})}]+\theta_{1}\eta_{p+m_{2}}+J(\mu,\theta_{1},\eta_{p+m_{2}-1})$$
  We finally compute 
  $\max I(\chi,\tilde\gamma)= \max L(\tilde\gamma)+\max H(\chi)$.
  \begin{itemize}
  \item For $G^{-1}(\theta_1)\le \eta_{p+m_2-1}\le \eta_{p+m_2}$, we check that $J(\mu,\theta_1,\eta_{p+m_2})$ equals
  \begin{equation}\label{tq}J(\mu,\theta_1,\eta_{p+m_2-1})+\sum\alpha_i \ln\frac{|\eta_i-\eta_{p+m_2-1}|}{|\eta_i-\eta_{p+m_2}|}+\theta_1(\eta_{p+m_2}-\eta_{p+m_2-1})\end{equation}
  so that by \eqref{maxH} we find
  $$\max I=J(\mu,\theta_1,\eta_{p+m_2})+J(\mu,\theta_2,\eta_{p+m_2-1})\,.$$
  \item  For $G_\mu^{-1}(\theta_1)\in [\eta_{p+m_2-1},\eta_{m+p_2}]$, $ J(\mu,\theta_2, G_\mu^{-1}(\theta_1))=J(\mu,\theta_2, G_\mu^{-1}(\theta_2))=J(\mu,\theta_2, \eta_{p+m_2-1}) $ since $\theta_1>\theta_2$ and $\eta_{p+m_2-1}
  <G_\mu^{-1}(\theta_1)<G_\mu^{-1}(\theta_2)$. Moreover as $\theta_1>G_\mu(\eta_{p+m_2})$,
  $$\max L= J(\mu,\theta_1,\eta_{p+m_2})+J(\mu,\theta_1,\eta_{m_2+p-1})+\ln \theta_1+1$$
  which again does not depend on $\eta_{m_2+p-1}$. Hence
  \begin{eqnarray*}
  \max I&=& J(\mu,\theta_2, \eta_{p+m_2-1})+\sum \alpha_i\ln|\eta_i-G_\mu^{-1}(\theta_1)|-\theta_1 G^{-1}_\mu(\theta_1)\\
  &&+ J(\mu,\theta_1,\eta_{p+m_2})
  +J(\mu,\theta_1, G^{-1}(\theta_1))+\ln \theta_1+1\\
  &=&  J(\mu,\theta_2, \eta_{p+m_2-1})+ J(\mu,\theta_1,\eta_{p+m_2})\\
  \end{eqnarray*}

  \item  For $G_\mu^{-1}(\theta_1)>\eta_{m+p_2}$, we compute
  \begin{eqnarray*}
  \max I&=& J(\mu,\theta_2, \eta_{p+m_2})+J(\mu,\theta_1, \eta_{p+m_2-1})\\
  &=&J(\mu,\theta_2, \eta_{p+m_2-1})+J(\mu,\theta_1, \eta_{p+m_2})\end{eqnarray*}
  since $\theta_2<\theta_1<G_\mu(\eta_{p+m_2})<G(\eta_{p+m_2-1})$ so that the above supremum does not depend on the outliers.

  \end{itemize}

In the case where  $N_{p+m_{2}}\ge 2$, we have $\chi =\eta_{p+m_{2}}$ by Weyl interlacing property and therefore it does not depend on the choice of $\gamma$. The result follows immediately after conditioning as in the proof above.
\end{proof}

A similar (but easier) argument  shows that
\begin{prop}\label{SI2dn}
Let $\theta_1\ge 0\ge  \theta_2$. Then, 
$$ \lim_{N\rightarrow\infty} \frac{1}{N} \log \E\left[ \exp \left( \frac{\beta N }{2}(\theta_1 \langle e , \bX_N e\rangle + \theta_2 \langle f, \bX_N f \rangle ) \right) \right]\qquad\qquad$$
$$\qquad\qquad=\frac{\beta}{2}(J(\mu,\theta_1,\eta_{p+m_2})+J(\mu,\theta_2,\eta_{1-m_{1}}))\,.
$$
Here $J$ is extended to negative values of $\theta_2$ by putting
$$J(\mu, \theta_2, \eta_{1-m_1})= K\left(\mu,\theta_2, \eta_{1-m_1}, v(\mu,\theta_2, \eta_{1-m_1})\right)
$$
with $K$ as in Lemma \ref{formulaI} and 
for $\theta\le 0,\eta\le l_\mu$, 
$$v(\mu,\theta, \lambda)=\left\lbrace\begin{array}{c l}
\lambda &\mbox{ if }  G_\mu(\lambda)\ge \theta,\cr
G_\mu^{-1}(\theta) &\mbox{ if }  G_\mu(\lambda)< \theta.\cr\end{array}\right.$$

\end{prop}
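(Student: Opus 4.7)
The plan is to adapt the conditioning argument of Proposition~\ref{SI2d}; the simplification in the mixed-sign case comes from the fact that $e$ and $f$ localize at disjoint ends of the spectrum, so that the variational problem splits without the competition analyzed in Proposition~\ref{SI2d}.

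First I would condition on $e$ to write
\[
\E\!\left[\exp\!\left(\tfrac{N\beta}{2}(\theta_1\langle e,\bX_N e\rangle+\theta_2\langle f,\bX_N f\rangle)\right)\right]
=\E\!\left[\exp\!\left(\tfrac{N\beta}{2}\theta_1\langle e,\bX_N e\rangle\right)\,\E\!\left[\exp\!\left(\tfrac{N\beta}{2}\theta_2\langle f,\bX_N^{(e)}f\rangle\right)\Big|\,e\right]\right]
\]
with $\bX_N^{(e)}=P_{e^\perp}\bX_N P_{e^\perp}$. Applying Proposition~\ref{SI1d} to $-\bX_N^{(e)}$ with parameter $-\theta_2\ge 0$ (equivalently, using the $\theta\le 0$ extension of $J$ given in the statement), together with the fact that the empirical spectral measure of $\bX_N^{(e)}$ converges to $\mu$ by Weyl interlacing, gives
\[
\frac{1}{N-1}\log\E\!\left[\exp\!\left(\tfrac{N\beta}{2}\theta_2\langle f,\bX_N^{(e)}f\rangle\right)\Big|\,e\right]\longrightarrow \frac{\beta}{2}\,J(\mu,\theta_2,\chi_{\min}(\gamma(e))),
\]
where $\chi_{\min}(\gamma(e))$ is the smallest eigenvalue of $\bX_N^{(e)}$, a continuous function of $\gamma(e)$ with values in $[\eta_{1-m_1},\eta_{2-m_1}]$. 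Combining with the LDP for $\gamma(e)$ (Theorem~\ref{LDP1}) and Varadhan's lemma reduces the problem to
\[
\frac{\beta}{2}\sup_{\gamma\in\Sigma}\left\{J(\mu,\theta_2,\chi_{\min}(\gamma))+\theta_1\sum_i\eta_i\gamma_i+\sum_{i=1}^p\alpha_i\log\frac{\gamma_i}{\alpha_i}\right\}.
\]

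The heart of the proof is the decoupling of this supremum. When $N_{1-m_1}\ge 2$, Weyl interlacing forces $\chi_{\min}(\gamma)=\eta_{1-m_1}$ for every $\gamma$, so $J(\mu,\theta_2,\eta_{1-m_1})$ factors out as a constant and the remaining maximization is exactly that of Proposition~\ref{SI1d}, producing $J(\mu,\theta_1,\eta_{p+m_2})$. When $N_{1-m_1}=1$, the extended $J(\mu,\theta_2,\cdot)$ is non-increasing on $[\eta_{1-m_1},\eta_{2-m_1}]$ (by the reflection $x\mapsto -x$ applied to the monotonicity used in the proof of Proposition~\ref{SI2d}), so the supremum prefers $\chi_{\min}=\eta_{1-m_1}$, which by the implicit equation analogous to~\eqref{zx} at the bottom of the spectrum is achieved whenever $\gamma_{1-m_1}=0$. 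Since $\alpha_{1-m_1}=0$, the entropic cost of imposing $\gamma_{1-m_1}=0$ vanishes, and since $\theta_1\ge 0$ and $\eta_{1-m_1}$ is the smallest eigenvalue, imposing $\gamma_{1-m_1}=0$ only increases the linear term $\theta_1\sum_i\eta_i\gamma_i$. Thus minimizing $\chi_{\min}$ and maximizing the remaining functional do not compete, and the supremum decouples to
\[
\frac{\beta}{2}\bigl(J(\mu,\theta_1,\eta_{p+m_2})+J(\mu,\theta_2,\eta_{1-m_1})\bigr).
\]

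The main obstacle is the bookkeeping in the case $N_{1-m_1}=1$: one must check that the restricted supremum over $\{\gamma\in\Sigma:\gamma_{1-m_1}=0\}$ matches the unrestricted one. This is immediate because the top optimizer provided by Proposition~\ref{SI1d} already satisfies $\gamma_{1-m_1}^{*}=0$ and hence belongs to the restricted set, and the upper bound follows from the monotonicity argument above. Concatenating the two halves yields the claim.
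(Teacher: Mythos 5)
Your proof is correct and follows the conditioning-plus-LDP strategy that the paper indicates for this proposition (the paper only says ``a similar (but easier) argument shows'' without spelling out details). The key points all check out: for $\theta_2\le 0$ and $\chi\le l_\mu$ one has $\partial_\chi J(\mu,\theta_2,\chi)=\theta_2-G_\mu(\chi)\le 0$ in the regime $v=\chi$ and $\partial_\chi J=0$ otherwise, so $J(\mu,\theta_2,\cdot)$ is indeed non-increasing on $[\eta_{1-m_1},\eta_{2-m_1}]$; and the $\theta_1$-optimizer $\gamma^*$ from Lemma~\ref{formulaI} does have $\gamma^*_{1-m_1}=0$ (only $\gamma^*_1,\dots,\gamma^*_p,\gamma^*_{p+m_2}$ are nonzero), which by the bottom analogue of~\eqref{zx} forces $\chi_{\min}(\gamma^*)=\eta_{1-m_1}$. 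Thus the two pieces of the supremum are maximized at the same $\gamma$, and both bounds follow as you describe. Your route avoids redoing the $\tilde\gamma,\bar\gamma$ change of variables used in Proposition~\ref{SI2d}, replacing the analysis of $H$ and $L$ with a direct monotonicity-plus-shared-optimizer observation; this is arguably exactly the simplification the authors mean by ``easier,'' and it buys a shorter, more transparent argument at no loss of rigor. One small point of presentation: the Varadhan step is taken over the interior $\gamma_i>0$ (as in the paper's proof of Proposition~\ref{SI2d}), so the lower bound is attained by a sequence $\gamma^n\to\gamma^*$ with positive entries rather than $\gamma^*$ itself, using continuity of $\chi_{\min}$ and of the functional; this is implicit in your write-up but worth making explicit.
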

The formula for $J$ for negative $\theta$ can simply be found by replacing $\bX_N$ by $-\bX_N$.

\subsection{Limiting k-d spherical integrals}We now consider  more general $k$-dimensional spherical integrals with $k\ge 2$.
In the sequel we let $\lambda_{1}\ge\lambda_{2}\ge\cdots \ge\lambda_{k}\ge 0$ be  the limit of the $k$ largest outliers  of $\bX_{N}$ counted with multiplicity one, and  $\lambda_{-1}\le\cdots\le \lambda_{-\ell}\le0$  be the limit of the $\ell$ smallest outliers of $\bX_{N}$ counted with multiplicity one. With the previous notations, $\lambda_i=\eta_{p+m_2}$ for $i\in [1,N_{p+m_2}],$ $\lambda_i=\eta_{p+m_2-1}$ for $i\in [N_{p+m_2}+1,N_{p+m_2}+N_{p+m_2-1}]$.

\begin{prop}\label{SIkd} Fix two  integer numbers $k$ and $\ell$.
Let $(e_{1},\ldots e_{k}, e_{-1},\ldots e_{-\ell})$ be $k+\ell$ orthonormal vectors following the uniform law in the sphere and assume that the sequence $\bX_N$ has the form described at the beginning of this section. 
Let $\theta_1\ge \theta_2\ge\cdots \ge \theta_{k} \ge 0\ge\theta_{-\ell}\ge\cdots\ge \theta_{-1}$. Then
$$ \lim_{N\rightarrow\infty} \frac{1}{N} \log \E\left[ \exp \left( \frac{\beta N }{2}\sum_{i=-\ell, i \neq 0}^{k}\theta_{i}\langle e_{i},\bX_{N} e_{i}\rangle
 \right) \right]\qquad\qquad$$
$$\qquad\qquad=\frac{\beta}{2}\left(\sum_{i=1}^{k}
J(\mu,\theta_i,\lambda_{i})+\sum_{i=1}^{\ell}
J(\mu,\theta_{-i},\lambda_{-i})\right)\,.
$$

\end{prop}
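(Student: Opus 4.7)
My approach is induction on $k+\ell$. The cases $k+\ell = 1$ and $k+\ell = 2$ are exactly Propositions \ref{SI1d}, \ref{SI2d} and \ref{SI2dn}, so fix $k+\ell \geq 3$ and suppose the claim holds for all smaller totals. Without loss of generality assume $k\geq 1$ (else apply the argument to $-\bX_N$, using Proposition \ref{SI2dn} and its obvious extension). The plan is to condition on the eigenvector $e_1$ corresponding to the largest parameter $\theta_1$, apply the inductive hypothesis to the remaining $k+\ell-1$ directions, and then use the large deviation principle of Theorem \ref{LDP1} together with Varadhan's lemma to compute the full limit.

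More precisely, writing $\bX_N^{(e_1)} = P_{e_1^\perp}\bX_N P_{e_1^\perp}$ viewed as an $(N-1)\times(N-1)$ Hermitian matrix on $V = e_1^\perp$, I would express
$$\mathbb E\!\left[e^{\frac{\beta N}{2}\sum_{i}\theta_i\langle e_i,\bX_N e_i\rangle}\right] = \mathbb E\!\left[e^{\frac{\beta N}{2}\theta_1\langle e_1,\bX_N e_1\rangle}\,F_N(e_1)\right],$$
with $F_N(e_1)$ the conditional expectation of the remaining exponential given $e_1$. The $k+\ell-1$ orthonormal vectors $(e_i)_{i\neq 1}$ are then uniform on the unit sphere in $V$, and the empirical spectral distribution of $\bX_N^{(e_1)}$ still converges to $\mu$ by Weyl interlacing. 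Its top $k-1$ outliers $\chi_2(\gamma(e_1))\geq\dots\geq\chi_k(\gamma(e_1))$ are determined, as in the proof of Proposition \ref{SI2d}, either by the secular equation \eqref{zx} (when the corresponding eigenvalue of $\bX_N$ has multiplicity one) or by Weyl interlacing as $\chi_i = \lambda_i$ (when multiplicity is at least two), and are continuous in $\gamma(e_1)$; its bottom $\ell$ outliers are unchanged at $\lambda_{-1},\dots,\lambda_{-\ell}$.

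Applying the induction hypothesis to $\bX_N^{(e_1)}$ on $V$ yields
$$\frac{1}{N}\log F_N(e_1)\;\longrightarrow\;\frac{\beta}{2}\Bigl(\sum_{i=2}^{k} J(\mu,\theta_i,\chi_i(\gamma(e_1))) + \sum_{j=1}^{\ell} J(\mu,\theta_{-j},\lambda_{-j})\Bigr)$$
continuously in $\gamma(e_1)$, and by Theorem \ref{LDP1} together with Varadhan's lemma (using the uniform boundedness of $J$ in its last argument on compact sets and Remark \ref{SICont}) the desired limit equals $\frac{\beta}{2}\sum_{j=1}^{\ell}J(\mu,\theta_{-j},\lambda_{-j})$ plus $\frac{\beta}{2}$ times
$$\sup_{\gamma}\Psi(\gamma),\qquad \Psi(\gamma) = \theta_1\!\!\sum_{i=1-m_1}^{p+m_2}\!\!\eta_i\gamma_i + \sum_{i=1}^p\alpha_i\log\frac{\gamma_i}{\alpha_i} + \sum_{i=2}^{k}J(\mu,\theta_i,\chi_i(\gamma)).$$

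The remaining, and main, step is to show $\sup_\gamma \Psi(\gamma) = \sum_{i=1}^{k}J(\mu,\theta_i,\lambda_i)$. This is the principal obstacle, since the $\chi_i(\gamma)$ are coupled via the secular equation. I would carry it out by reducing to the 2-dimensional calculation in the proof of Proposition \ref{SI2d}: optimize first over the coordinates $\gamma_i$ with $i\notin\{1,\dots,p,p+m_2\}$ to force them to $0$ (since those $\alpha_i$ vanish), and then perform the change of variables $\tilde\gamma_i = \gamma_i/|\chi_2-\eta_i|$, followed by $\bar\gamma_i = (\eta_{p+m_2}-\eta_i)\tilde\gamma_i$, exactly as in \eqref{nm}. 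This isolates the top outlier dependence via a function $H(\chi_2)$ analogous to \eqref{defH}, whose maximum is computed as in \eqref{maxH} using the three cases for the position of $G_\mu^{-1}(\theta_1)$; the identity \eqref{tq} then produces the term $J(\mu,\theta_1,\lambda_1)$ while the residual optimization problem has the form of $\Psi$ with $k$ replaced by $k-1$ and leading outlier $\lambda_2 = \chi_2$, to which one applies the inductive identity once more. Iterating peels off the $J(\mu,\theta_i,\lambda_i)$ one at a time, completing the induction.
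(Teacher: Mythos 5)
Your structural plan — induction on $k+\ell$, conditioning on $e_1$, describing the spectrum of $\bX_N^{(e_1)}$ via Weyl interlacing and the secular equation, applying the inductive hypothesis conditionally, and then invoking Theorem \ref{LDP1} with Varadhan — is exactly the paper's strategy and is correct. The gap is in the final optimization of
$$\Psi(\gamma)=\theta_1\sum_i\eta_i\gamma_i+\sum_{i=1}^p\alpha_i\log\frac{\gamma_i}{\alpha_i}+\sum_{i\neq 0,1}J(\mu,\theta_i,\chi_i(\gamma)),$$
which you propose to carry out by iterating the $2$-dimensional computation one outlier at a time. Two pieces of that reduction do not work as stated. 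First, you cannot begin by ``optimizing the coordinates $\gamma_i$ with $i\notin\{1,\dots,p,p+m_2\}$ to $0$'': unlike the $1$-d objective of Lemma \ref{formulaI}, where those $\gamma_i$'s enter only linearly, in $\Psi$ they also determine the interlacing roots $\chi_j(\gamma)$, and sending them to zero collapses the $\chi_j$'s to boundary values of their intervals (e.g.\ $\gamma_{p+m_2-1}\to 0$ pins $\eta_{p+m_2-1}$ as an eigenvalue of $\bX_N^{(e_1)}$). Second, after the $2$-d substitution $\bar\gamma_i=\gamma_i(\eta_{p+m_2}-\eta_i)/(\chi_1-\eta_i)$, the remaining constraints $\sum_i\gamma_i/(\chi_j-\eta_i)=0$ for $j\ge2$ become $\sum_i\bar\gamma_i(\chi_1-\eta_i)/\bigl((\eta_{p+m_2}-\eta_i)(\chi_j-\eta_i)\bigr)=0$, which is \emph{not} a secular equation in $\bar\gamma$, so the ``residual problem'' is not of the same shape as $\Psi$ with $k-1$ outliers, and the induction does not close.

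The paper resolves this by making a \emph{single simultaneous} change of variables $\bar\gamma_i=\gamma_i F(\eta_i)$ with
$$F(X)=\frac{\prod_{j=1}^{\ell}(\eta_{-m_1+j}-X)\prod_{j=1}^{k-1}(\eta_{p+m_2-j+1}-X)}{\prod_{j=1}^{\ell}(\chi_{-j}-X)\prod_{j=1}^{k-1}(\chi_j-X)},$$
which kills all the outlier coordinates of $\bar\gamma$ except the $(k)$-th one at once and, via the partial-fraction decomposition $F(X)=1+\sum_j a_j/(\chi_j-X)$ together with all the secular equations, shows $\sum\bar\gamma_i=1$. It then proves this map is a bijection $D\to\bar D$ (injectivity needs a separate partial-fraction argument with the auxiliary rationals $G_j$) and only then decouples the objective into $\sum_i H(\chi_i,\theta_\cdot)$ plus a $1$-d problem in $\bar\gamma$ with top outlier $\eta_{p+m_2-k+1}$. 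You would need to supply this simultaneous substitution and its bijectivity proof; the one-outlier-at-a-time scheme, as described, does not yield the stated decoupling.
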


\begin{proof}
For the sake of simplicity we will assume the outliers $\lambda_1,...,\lambda_{k}$ and $\lambda_{-1},...,\lambda_{-\ell}$ are distinct. The general case can be deduced by equicontinuity of the spherical integral.
We will prove this proposition by induction over $k+\ell$. We know it is true for $k+\ell\le 2$ by the previous section. By symmetry we can assume the proposition  true for $(\ell, k-1)$ and it is enough to show it still holds for $(\ell,k)$. 
Thus we will set $\eta_{p+m_2-i+1} = \lambda_i$ for $i \in [1,k]$, and $\eta_{- m_1 +i}= \lambda_{-i}$ and we will assume $N_{  p+m_2-i}=1$ for $i \in [1,k]$ and $N_{-m_1 + i} = 1$ for $i \in [1,\ell]$.
 We proceed as in the $2$-dimensional case by conditioning on the vector $e_1$ and so we have : 

\begin{eqnarray*}
 &\E\left[ \exp \left( \frac{\beta N }{2}\sum_{i=-\ell, i \neq 0}^{k}\theta_{i}\langle e_{i},\bX_{N} e_{i}\rangle
 \right) \right]   = \\
 &\E \left[ \exp \left( \frac{\beta N }{2}\theta_{1}\langle e_{1},\bX_{N} e_{1}\rangle
 \right)  \E\left[ \exp \left( \frac{\beta N }{2}\sum_{i=-\ell, i \neq 0 \atop i \neq 1}^{k}\theta_{i}\langle e_{i},\bX^{(e_1)}_{N} e_{i}\rangle
 \right)  \Big| e_1 \right] \right] 
\end{eqnarray*}

As previously, the eigenvalues of $\bX_N^{(e_1)}$ (seen as a $N-1 \times N-1$ matrix) are interlaced with those of $\bX_N$. Thus if we denote $\chi_j$ the $j$-th largest eigenvalue of $\bX_N^{(e_1)}$, $\chi_j$ is the unique solution in the interval $[\eta_{p+m_2 - j  }, \eta_{p+m_2 -j +1}]$ of the equation :
\begin{equation}\label{poiu}\sum_{i =1 - m_1}^{p+ m_2} \frac{\gamma_i(e)}{\chi_j - \eta_i} =0\end{equation}
for $j\in [1,k-1]$. The same equation holds for the $\ell$ smallest eigenvalues  below the bulk :   if we denote $\chi_{-j}$ the $j$-th smallest eigenvalue of $\bX_N^{(e)}$, it is solution of the same equation in $[ \eta_{-m_1 + j}, \eta_{ -m_1 +j +1}]$. Observe that unless $\gamma_i(e)$ vanishes, $\chi_i$ can not be equal to $\eta_i$. 
{
So, if we denote for $i = -m_1+ l+1,...,p+m_2-k$,   $\delta_i$ the solution of the same interlacing equation in $[\eta_i, \eta_i+1]$, up to diagonalization, $\bX_N^{(e_1)}$ has the following form :

\[ \bX_N^{(e_1)} = diag( \chi _{-1},...,\chi_{-l}, \underbrace{\eta_{-m_1+l+1}}_{N_{ -m_1+l+1} -1 }, \delta_{-m_1+l+1}, \underbrace{\eta_{-m_1+l+2}}_{N_{ -m_1+l+2} - 1},...,\underbrace{\eta_{p+ m_2 - k}}_{N_{ p+ m_2 - k} -1 }, \delta_{p+ m_2 -k}, \chi_{k-1},...,\chi_1 ) \]
where the $\delta_j$ and the $\chi_i$ being continuous functions of $\gamma(e)$.} 
If we denote $\chi_i(\kappa)$ the value of $\chi_i$ when $\gamma(e) = \kappa$, we deduce  by induction  and using the continuity in Remark \ref{SICont} that when $\gamma({e_1})$ converges toward $\kappa$ : 

\[ \lim_{N \to \infty} \frac{1}{N-1} \ln \E \left[ \exp \left( \frac{\beta N }{2}\sum_{i=-\ell, i \neq 0, 1}^{k}\theta_{i}\langle e_{i},\bX^{(e_1)}_{N} e_{i}\rangle\right)  \Big| e_1 \right] = \frac{\beta}{2}\sum_{i=-\ell, i \neq 0, 1}^{k} J(\mu,\theta,\chi_i(\kappa)) \]

Then, using again that $\gamma^{N}$ satisfies a large deviations principle we can write that : 

\begin{eqnarray}
&&\lim_{N \to \infty} \frac{1}{N} \ln \E\left[ \exp \left( \frac{\beta N }{2}\sum_{i=-\ell, i \neq 0}^{k}\theta_{i}\langle e_{i},\bX_{N} e_{i}\rangle
 \right) \right] \label{bn}\\
&&=\frac{\beta}{2}\sup_{\gamma \in (\R^{+})^{p+m_1+m_2}, \sum \gamma_i =1} \bigg\{ \sum_{i=-\ell}^{-1}  J(\mu,\theta_i,\chi_i(\gamma)) + \sum_{i=2}^{k} J(\mu,\theta_{i},\chi_{i-1}(\gamma)) \nonumber\\
&&\qquad\qquad+ \sum_{i=1}^p \alpha_i \log \frac{\gamma_i}{\alpha_i} + \sum_{i = 1- m_1}^{p+m_2} \theta_1 \eta_{i} \gamma_i \bigg\} \nonumber
\end{eqnarray}

By continuity, taking this supremum only on the set of $\gamma$  summing up to  $1$ and such that $\gamma_i >0$ does not change its value. Notice that for such $\gamma$ we have for all $i$ and $j$ $\chi_i(\gamma) \neq \eta_j$. We set 
$I_{-j} = ] \eta_{-m_1+j}, \eta_{-m_1 + j +1}[$ for $j =1,...,\ell$ and $I_j =] \eta_{m_2 + p -j }, \eta_{m_2 + p -j+1 }[$ for $j =1,...,k-1$ and we define :
\[ D = \Big\{ (\gamma,\chi) \in \prod_{j=- \ell, j \neq 0}^{k-1} I_j \times (\R^{+,*})^{m_1+m_2+p} : \sum_i \gamma_i = 1,  \sum_i \frac{\gamma_i}{\chi_j - \eta_i} = 0\, \forall j \in [-\ell, k-1] \setminus \{0 \}\Big\} \]
Therefore we have :
\begin{eqnarray}
&&\lim_{N \to \infty} \frac{1}{N} \ln \E\left[ \exp \left( \frac{\beta N }{2}\sum_{i=-\ell, i \neq 0}^{k}\theta_{i}\langle e_{i},\bX_{N} e_{i}\rangle
\right) \right] \\
&&=\frac{\beta}{2}\sup_{(\chi,\gamma) \in D} \bigg\{ \sum_{i=-\ell}^{-1}  J(\mu,\theta_i,\chi_i) + \sum_{i=2}^{k} J(\mu,\theta_{i},\chi_{i-1}) 
+ \sum_{i=1}^p \alpha_i \log \frac{\gamma_i}{\alpha_i} + \sum_{i = 1- m_1}^{p+m_2} \theta_1 \eta_{i} \gamma_i \bigg\} \nonumber
\end{eqnarray}

For $i = -m_1 +\ell,...,m_2 +p -k$, we set :
$$\bar{\gamma}_i = \frac{\prod_{j=1}^{\ell} (\eta_{-m_1+j} - \eta_{i})\prod_{j=1}^{k -1}(\eta_{m_2+p- j +1} - \eta_{i})  }{\prod_{j=1}^{\ell} (\chi_{-j} - \eta_{i})\prod_{j=1}^{k -1}(\chi_{j} - \eta_{i}) }\gamma_i$$
We have that if $\gamma_i >0$ for all $i$ then $\bar{\gamma}_i > 0$ for all $i$ and $\bar\gamma_i$ vanishes at the outliers. We want to prove that this definition provides a one to one correspondance between the set $D$ of parameters $(\chi,\gamma)$ and the set $\bar{D}$ parameters $(\chi,\bar{\gamma})$ defined as follows : 
\[ \bar{D} = \Big\{ (\chi,\bar{\gamma}) \in \prod_{j=- \ell, j \neq 0}^{k-1} I_j \times (\R^{+,*})^{m_1+m_2+p-k-\ell+1} , \sum_i \bar{\gamma}_i = 1\Big\} \]
Note that $\bar\gamma$ lives a priori in a set of  $k+\ell-1$ dimension  smaller but $\gamma$ was satisfying  as well $k+\ell-1$ additional equations. 
First, let us prove that if $(\chi,\gamma) \in D$ the $\bar{\gamma}_i's$ sum up to $1$. We let for a  real number $X$, $F$ to be the rational function 
$$F(X) = \frac{\prod_{j=1}^{\ell} (\eta_{-m_1+j} - X)\prod_{j=1}^{k -1}(\eta_{m_2+p- j +1} - X)  }{\prod_{j=1}^{\ell} (\chi_{-j} - X)\prod_{j=1}^{k -1}(\chi_{j} - X) }$$
 so that $\bar{\gamma}_i = F(\eta_i) \gamma_i$ for $i \in [-m_1 +\ell,m_2 +p -k]$  and $F(\eta_i) =0$ for the other values of $i$. Let us decompose $F$ in partial fractions : as it goes to one at infinity, we find 

\[ F(X) = 1 + \sum_{j=- \ell, j \neq 0}^{k-1} \frac{a_j}{ \chi_{j} - X} \]
for some real numbers $a_j$.
Then, since $F(\eta_i) =0$ for $i\neq -m_1 +\ell,...,m_2 +p -k$ we have :
\begin{eqnarray*} \sum_{i = -m_1 +\ell}^{m_2+p-k} \bar{\gamma}_i  &= &\sum_{i = -m_1 +1}^{m_2+p} F(\eta_i)\gamma_i \\
	&= &\sum_{i = -m_1 +1}^{m_2+p} \gamma_i  + \sum_{j=- \ell,j \neq0}^{k-1}a_j \sum_{i = -m_1 +1}^{m_2+p} \frac{\gamma_i}{\chi_j - \eta_i} \\
	&= & 1 
\end{eqnarray*}
where we used the interlacing relations. Therefore, since when $\chi$ is fixed the fonction $\gamma \mapsto \bar{\gamma}$ is an affine map between the affine subspace $E$ of $\R^{ p+ m_1+m_2}$ defined by  the $k+\ell-1$ interlacing relations and the condition of sum one and the affine subspace $F$  of $\R^{ p+ m_1+m_2 - k -\ell +1}$ defined by the condition of sum one. Since these spaces have the same dimension to conclude we only need to prove that this map is injective and that for all $\gamma \in E$, $\bar{\gamma}_i >0$ for all $i$ implies $\gamma_i >0$ for all $i$. To prove injectivity first notice that $\gamma_i=F(\eta_i)^{-1}\bar\gamma_i$ for $i \in [-m_1 +\ell,m_2 +p -k]$. We next show how to reconstruct $\gamma_i$  for $i \in [-m_1 +\ell,m_2 +p -k]^c$, and more precisely $i\in [m_2 +p -k+1, m_2+p]$. 
To this end,
for $j=1,...,k-1$, we let $G_j(X) = \frac{F(X)}{(\eta_{p+m_2 -j+1} -X)}$ and for $j=1,...,\ell$, we let $G_{-j}(X) = \frac{F(X)}{(\eta_{-m_1+j} -X)}$. Let us suppose $j >0$ (the $j <0$ case is similar). Then again decomposing $G_j$ in partial fractions, we have 
\[  G_j(X) = \sum_{j=- \ell, j' \neq 0,j}^{ k-1} \frac{b_{j'}}{\chi_{j'} -X} \]
for some real numbers $b_j$. Again by the interlacing relations
\[ \sum_{i= - m_1 +1}^{p + m_2}  G_j(\eta_i)\gamma_i = 0\,. \]
But we can also write : 
\[ \sum_{i= - m_1 +1}^{p + m_2}  G_j(\eta_i)\gamma_i = \sum_{i= - m_1 + \ell}^{p + m_2 - k } \frac{ \bar{\gamma_i}}{\chi_j - \eta_i} + G_j(\eta_{p+m_2-j+1}) \gamma_{p+m_2-j+1} \]
so that we deduce
\[ \gamma_{p+m_2-j+1} = - (\sum_{i= - m_1 = \ell}^{p + m_2 - k +1} \frac{ \bar{\gamma_i}}{\chi_j - \eta_i}) / G_j(\eta_{p+m_2-j+1})\,. \]
As a consequence, the map $\gamma \mapsto \bar{\gamma}$ is injective. 
Furthermore if $\bar{\gamma}_i >0$ for all $i$, then $\gamma_j > 0$ since $G_j(\eta_j) < 0$. The same remains true for $j$ negative.
Therefore we have that the change of variables from $(\chi, \gamma)\in D$ to $(\chi,\bar{\gamma}) \in \bar{D}$ is one to one. But before changing variables, let us compare $\sum \eta_i \gamma_i$ and $\sum \eta_i \bar{\gamma_i}$. We use the following decomposition : 
\[ XF(X) = X + S + \sum_{j=- \ell,j \neq 0}^{k-1} \frac{c_j}{\chi_j - X} \]
for some real numbers $c_j$ and where 
\[ S = \sum_{j= - \ell,j \neq 0}^{k-1} \chi_j - \sum_{j=1}^{\ell} \eta_{- m_1+j}  - \sum_{j=1}^{k-1} \eta_{m_2 +p -j +1} \,. \]
We deduce that
\[ \sum_{ - m_1 + l}^{m_2 +p -k +1} \eta_i \bar{\gamma}_i = \sum_{ - m_1 +1}^{m_2 +p} \eta_i F(\eta_i) \gamma_i = \sum_{ - m_1 +1}^{m_2 +p} \eta_i  \gamma_i + S \]
where we used again the interlacing relationships and the fact that the $\gamma_i$'s sum up to $1$. Coming back to \eqref{bn}, 
we have to take the supremum of the following function $I$ for $( \chi,\bar{\gamma})\in \bar{D}$: 

\begin{eqnarray*}
 I(\bar{\gamma}, \chi) &=& \sum_{j=1}^{k-1} \left[ J(\mu,\theta_{j+1},\chi_j) + \sum_{i=1}^{p} \alpha_i \log  |\chi_j - \eta_{i}| - \sum_{i=1}^{p} \alpha_i \log  |\eta_{m_2+p-j+1} - \eta_{i}| \right] \\
& +  &
\sum_{j=-\ell}^{-1} \left[ J(\mu,\theta_{j},\chi_j) + \sum_{i=1}^{p} \alpha_i \log  |\chi_j - \eta_{i}| -\sum_{i=1}^{p} \alpha_i \log  |\eta_{-m_1 - j} - \eta_{i}| \right] \\
&+& \sum_{i=1}^p \alpha_i \ln  \frac{\bar{\gamma_i}}{\alpha_i} + \theta_1\left( \sum_{i= -m_1 +1}^{p+m_2} \eta_i \bar{\gamma}_i -  \sum_{j=- \ell,j \neq 0}^{k-1} \chi_j +  \sum_{j=1}^\ell \eta_{- m_1+j}  +  \sum_{j=1}^{k-1} \eta_{m_2 +p -j +1}\right)
\end{eqnarray*}

Therefore we have :
\begin{eqnarray*} I(\bar{\gamma}, \chi) &=& \sum_{i=1}^{k-1} H(\chi_{i},\theta_{i+1} ) + \sum_{i=-\ell}^{-1} H(\chi_{i},\theta_{i} ) + \sum_{i=1}^p \alpha_i \log \frac{\bar{\gamma}_i}{\alpha_i} + \theta_1 \sum_{i=m_1 -\ell}^{p+m_2-k +1} \eta_i \bar{\gamma}_i \\ &-& \sum_{j=1}^{k-1} \sum_{i=1}^{p} \alpha_i \log  |\eta_{m_2+p-j+1} - \eta_{i}| - \sum_{j=-\ell}^{-1}\sum_{i=1}^{p}\alpha_i \log  |\eta_{-m_1 - j} - \eta_{i}| \\ &\jonathan{+}& \theta_1 \sum_{j=1}^{\ell} \eta_{- m_1+j}  \jonathan{+} \theta_1 \sum_{j=1}^{k-1} \eta_{m_2 +p -j +1}
\end{eqnarray*}
where  we set : 

\[ H(\chi,\theta) = J(\mu,\theta,\chi) + \sum_{i=1}^p \alpha_i \ln |\chi - \eta_i| -  \chi \theta_1\,. \]
The supremum over $\bar\gamma$ and $\chi$ are now decoupled and the $\chi_i$ belongs to $]\eta_{-m_1+i},\eta_{-m_1 +i +1}[$ if $i\in [-\ell,-1]$ and $]\eta_{p+m_2-i+1},\eta_{p+m_2-i+2}[$ if $i\in [1,k]$. 
As in the two-dimensional case we can compute  for $i =2,...,k$,

$$\sup_{\chi\in ]\eta_{p+m_2-i+1},\eta_{p+m_2-i+2}[}
H(\chi,\theta_i)=\left\lbrace
\begin{array}{ll}
H(\eta_{p+m_2-i+1}, \theta_{i} )& \mbox{ if }  G_\mu^{-1}(\theta_1)\le \eta_{p+m_2-i+1},\cr
H( G_{\mu}^{-1}(\theta_{1}), \theta_{i} )&\mbox{ if }  G_\mu^{-1}(\theta_1)\in [\eta_{p+m_2-i+1},\eta_{m+p_2-i+2}],\cr
H(\eta_{p+m_2-i+2}, \theta_{i}) &\mbox{ if }  G_\mu^{-1}(\theta_1)>  \eta_{p+m_2-i+2}.\cr\end{array}\right. $$
Moreover, for $i=1, ...,\ell$, $H(\chi,\theta_{-i})$ is a decreasing function of $\chi$ since $\theta_{-i}$ is negative and so 

\[ \sup_{\chi\in ]\eta_{-m_1+i},\eta_{-m_1 +i +1}[} H(\chi,\theta_{-i})=H( \eta_{-m_1+i},\theta_{-i})\,. \]
It remains to optimize the sum of the third and fourth term in $I(\bar{\gamma},\chi)$. But this sum is equal to $I_{\theta_1,\eta_{p+m_2 - k+1}}^{p+m_2 -k+1}(\bar{\gamma})$, see \eqref{defI}. Thus, taking the supremum for $\bar{\gamma}_i > 0$ and $\sum \bar{\gamma}_i =1$ gives  $J(\mu,\theta_1,\eta_{p+m_2-k+1})$.

To conclude, we need to look at the position of $G^{-1}_{\mu}( \theta_1)$ relatively to the $k$ largest outliers. 
Let us denote $H_i = \max H (.,\theta_i)$ for $i < 0$ and $H_i = \max H(.,\theta_{i+1})$ for $i >0$. We have

\begin{eqnarray*}
 \sup I &=& \sum_{j=1}^{k-1} \left( H_j - \sum \alpha_i \log | \eta_i - \eta_{p+m_2 - j +1}| + \theta_1 \eta_{p+m_2 - j +1} \right)\\
&+& \sum_{j=1}^{\ell} \left( H_{-j} - \sum \alpha_i \log | \eta_i - \eta_{-m_1+j}| + \theta_1 \eta_{-m_1 +j}  \right) \\
&+& J(\mu,\theta_1, \eta_{p+m_2-k+1})
\end{eqnarray*}
Here we will treat the case where $G^{-1}_{\mu}( \theta_1) \in [\eta_{p+m_2 -k +1}, \eta_{p+m_2}]$ which is the most complex one.
First of all, since for $j=1,...,\ell$ $H_{-j} = H( \eta_{-m_1+ j},\theta_{-j})$, we have that in the second sum, the term of index $j$ is indeed equal to $J(\mu,\theta_{-j},\eta_{-m_1+j})$.  If $j'$ is the index such that, $G^{-1}_{\mu}( \theta_1) \in [\eta_{p+m_2-j'}, \eta_{p+ m_2 -j'+1}]$ then for $j <   j'$, $H_{j}= H(\eta_{ p + m_2 -j},\theta_{j+1})$ 
and the term of index $j$ of the first sum is :

\[ J(\mu,\theta_{j+1}, \eta_{ p + m_2 - j }) + \sum \alpha_i \log \frac{| \eta_{p+m_2 -j} - \eta_i |}{|\eta_{p+m_2 -j +1} - \eta_i|} + \theta_1 ( \eta_{p+m_2 - j } - \eta_{p+m_2 - j +1}) \]

The term of index $j'$ is equal to : 
\[ J(\mu,\theta_{j+1},G^{-1}_\mu(\theta_1)) + \sum_i \alpha_i \log \frac{| G^{-1}_\mu(\theta_1) - \eta_i|}{|\eta_{p + m_1 -j' +1} - \eta_i|} + \theta_1 ( \eta_{p+m_2 - j'} - G^{-1}_\mu(\theta_1)) \]

And the terms $j > j'$ are equal to $J( \mu, \theta_{j+1},\eta_{p+m_2 - j +1})$.  Since $\theta_{j+1} \leq \theta_1$, $G_{\mu}^{-1}(\theta_{j+1}) \geq G_{\mu}^{-1}(\theta_{1})$, so we have that for $j >j'$
\[ J( \mu, \theta_{j+1},\eta_{p+m_2 - j +1}) = J( \mu, \theta_{j+1},\eta_{p+m_2 - j }) \mbox{ and }
 J(\mu,\theta_{j'+1},G^{-1}_\mu(\theta_1)) = J( \mu, \theta_{j'+1},\eta_{p+m_2 - j' }) \]
Therefore the whole sum can be simplified as follows : 

\begin{eqnarray*}
 \max I &=& \sum_{j=1}^{k-1} J(\mu,\theta_{j+1},\eta_{p+m_2-j})+ \sum_{j=1}^{\ell} J(\mu,\theta_{-j},\eta_{-m_1+j}) +\sum_i \alpha_i \ln \frac{|G^{-1}_{\mu}(\theta_1) - \eta_i|}{|\eta_{p+m_2} - \eta_i|} \\&+ &\theta_1(\eta_{p+m_2} - G^{-1}_{\mu}(\theta_1) ) 
+ J(\mu,\theta_1, \eta_{p+m_2-k+1})
\end{eqnarray*}
Then we  notice that $ J(\mu,\theta_1, \eta_{p+m_2-k+1}) = J(\mu,\theta_1,G^{-1}_{\mu}(\theta_1) )$ and  conclude since : 
\[ J(\mu,\theta_1, \eta_{p+m_2}) = J(\mu,\theta_1,G^{-1}_{\mu}(\theta_1) ) + \sum_i \alpha_i \ln \frac{|G^{-1}_{\mu}(\theta_1) - \eta_i|}{|\eta_{p+m_2} - \eta_i|} + \theta_1(\eta_{p+m_2} - G^{-1}_{\mu}(\theta_1) )\,. \]



\end{proof}
\section{Diffuse spectrum}\label{dif}
We next consider the general case where $\bX_{N}$ is a Hermitian matrix such that
$$\hat\mu_{\bX_{N}}=\frac{1}{N}\sum_{i=1}^{N}\delta_{\lambda_{i}}$$
converges towards a probability measure $\mu$ with support with rightmost point $r_{\mu}$ and leftmost point $l_{\mu}$ which are assumed to be finite.   Let  $\lambda_{1}^{N}\ge \lambda_{2}^N\ge\cdots \ge \lambda_{k}^{N}\ge r_{\mu}$ be  the  $k$ largest outliers  of $\bX_{N}$ counted with multiplicity one, $\lambda_{N }^{N}\le\cdots\le \lambda_{N-\ell +1}^{N}\le 0$  be the smallest outliers of $\bX_{N}$ with multiplicity one (but eventually equal).
Assume that
$$\lim_{N\rightarrow\infty}\lambda_{i}^{N}=\lambda_{i}>r_{\mu} \mbox{ for } i\in [1,k], \lim_{N\rightarrow\infty}\lambda_{N-i +1}^{N}=\lambda_{-i}<l_{\mu} \mbox{ for } i\in [1,\ell]\,.$$
If the above assumption is true for the $k$ largest outliers and we want to study the spherical integral with non-negative $\theta_i$'s while  relaxing the assumption on the smallest ones, we still need to assume that the latter are bounded.

We are going to  prove that
\begin{prop}\label{SIkdc} Fix two integer numbers $k,\ell$.
Let $\theta_1\ge \theta_2\ge\cdots \ge \theta_{k} \ge 0\ge\theta_{- \ell}\ge\cdots\ge \theta_{-1}$. Then
$$ \lim_{N\rightarrow\infty} \frac{1}{N} \log \E\left[ \exp \left( \frac{\beta N }{2}\sum_{i=- \ell, i \neq 0}^{k}\theta_{i}\langle e_{i},\bX_{N} e_{i}\rangle
 \right) \right]\qquad\qquad$$
$$\qquad\qquad=\frac{\beta}{2}\left(\sum_{i=1}^{k}
J(\mu,\theta_i,\lambda_{i})+\sum_{i=1}^{\ell}
J(\mu,\theta_{-i},\lambda_{-i})\right)\,.
$$

\end{prop}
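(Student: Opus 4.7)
The plan is to approximate $\bX_N$ by a Hermitian matrix $\bX_N^\epsilon$ whose spectrum takes only finitely many distinct values so that Proposition \ref{SIkd} applies, and then to send the discretization parameter $\epsilon$ to zero. Concretely, for $\epsilon>0$ I would fix a partition $l_\mu - 1 = a_0 < a_1 < \cdots < a_p = r_\mu + 1$ of mesh at most $\epsilon$, then construct $\bX_N^\epsilon$ from $\bX_N$ by keeping the $k$ top and $\ell$ bottom outliers $\lambda_{\pm i}^N$ untouched and rounding every bulk eigenvalue $\lambda_j^N \in [a_{i-1}, a_i)$ down to $a_{i-1}$ (leaving the eigenvectors unchanged). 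The resulting matrix has the form studied in Section \ref{dis}, with limiting weights $\alpha_i = \mu([a_{i-1}, a_i))$ and limiting bulk measure $\mu^\epsilon = \sum_{i=1}^p \alpha_i \delta_{a_{i-1}}$, while $\|\bX_N - \bX_N^\epsilon\|_\infty \le \epsilon$ eigenvalue by eigenvalue.

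By Remark \ref{SICont}, the map from the eigenvalues to $\frac{1}{N}\log \mathbb{E}[\exp(\frac{\beta N}{2}\sum_{i}\theta_i\langle e_i,\cdot\,e_i\rangle)]$ is Lipschitz in $\|\cdot\|_\infty$ with constant $\frac{\beta}{2}\sum_i|\theta_i|$, uniformly in $N$. Consequently
\[
\left|\frac{1}{N}\log \mathbb{E}\!\left[e^{\frac{\beta N}{2}\sum_i\theta_i\langle e_i,\bX_N e_i\rangle}\right] - \frac{1}{N}\log \mathbb{E}\!\left[e^{\frac{\beta N}{2}\sum_i\theta_i\langle e_i,\bX_N^\epsilon e_i\rangle}\right]\right| \leq \frac{\beta\epsilon}{2}\sum_{-\ell\le i\le k, i\neq 0}|\theta_i|.
\]
Applying Proposition \ref{SIkd} to $\bX_N^\epsilon$, whose outliers still converge to $\lambda_{\pm i}$ and whose limiting bulk measure is $\mu^\epsilon$, the right-hand limit is $\frac{\beta}{2}\bigl(\sum_{i=1}^k J(\mu^\epsilon,\theta_i,\lambda_i)+\sum_{i=1}^\ell J(\mu^\epsilon,\theta_{-i},\lambda_{-i})\bigr)$. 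It then remains to let $\epsilon\to 0$: since $\mu^\epsilon$ is supported in $[l_\mu,r_\mu]$ and $\lambda_i > r_\mu$ (resp. $\lambda_{-i} < l_\mu$), the maximizer $v(\mu,\theta_i,\lambda_i)\in\{\lambda_i,G_\mu^{-1}(\theta_i)\}$ stays strictly outside $\mathrm{supp}(\mu^\epsilon)$ for $\epsilon$ small, hence $G_{\mu^\epsilon}(v)\to G_\mu(v)$ and $\int\log|v-x|d\mu^\epsilon(x)\to\int\log|v-x|d\mu(x)$ by weak convergence, giving $J(\mu^\epsilon,\theta_i,\lambda_i)\to J(\mu,\theta_i,\lambda_i)$.

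The main obstacle is the continuity of $J(\mu,\theta,\lambda)$ in the measure $\mu$ under the approximation $\mu^\epsilon\to \mu$. This is delicate precisely because $\mu^\epsilon$ carries atoms at the discretization points which could, a priori, interfere with the optimal $v$. The key observation that makes the argument go through is the strict separation $\lambda_i > r_\mu \ge a_{p-1}$ (resp. $\lambda_{-i}<l_\mu\le a_0$) combined with the fact that $G_\mu^{-1}(\theta_i)>r_\mu$ by monotonicity of $G_\mu$ on $(r_\mu,\infty)$; these jointly ensure that $v$ remains bounded away from $\mathrm{supp}(\mu^\epsilon)$ uniformly in small $\epsilon$, so the logarithmic and Stieltjes potentials are continuous at $v$. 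One also has to verify that the approximation $\bX_N^\epsilon$ indeed has the exact structure required by Proposition \ref{SIkd} (outliers of multiplicity one, bulk clusters whose normalized sizes converge), which follows directly from the construction.
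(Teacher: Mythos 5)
Your argument follows essentially the same route as the paper: discretize the bulk spectrum so that Proposition \ref{SIkd} applies, invoke the $\frac{\beta}{2}\sum_i|\theta_i|$-Lipschitz property of the $k$-dimensional spherical integral in the spectrum (Remark \ref{SICont}), and then pass to the limit using continuity of $\mu\mapsto J(\mu,\theta,\lambda)$. Two small points distinguish it from the paper's proof and one deserves attention. First, you keep the outliers $\lambda_{\pm i}^N$ exactly, whereas the paper bins every eigenvalue; this is a cosmetic difference since Remark \ref{SICont} handles the $o(1)$ drift of the outliers either way. Second, and more substantively, your construction uses $\alpha_i=\mu([a_{i-1},a_i))$ and silently assumes $n_i^{\epsilon,N}/N\to\alpha_i$, which fails if $\mu$ charges a partition point $a_i$ — the paper devotes a separate paragraph to the case of a discontinuous partition function, smoothing $\bX_N$ by a small random perturbation. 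In your setup it is enough to choose the partition points $a_i$ to avoid the (at most countably many) atoms of $\mu$, but this must be stated; as written there is a gap when $\mu$ is not atomless. Finally, when you argue $J(\mu^\epsilon,\theta_i,\lambda_i)\to J(\mu,\theta_i,\lambda_i)$ you implicitly work with $v(\mu,\theta_i,\lambda_i)$, but the quantity $J(\mu^\epsilon,\cdot,\cdot)$ is evaluated at $v(\mu^\epsilon,\theta_i,\lambda_i)$; one should note that $G_{\mu^\epsilon}\to G_\mu$ locally uniformly away from $\mathrm{supp}\,\mu$, so that $v(\mu^\epsilon,\theta_i,\lambda_i)\to v(\mu,\theta_i,\lambda_i)$ and the two maximizers stay uniformly separated from the supports (this is the content of the continuity result from \cite{Ma07} or the Appendix that the paper cites instead).
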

\begin{proof}
We first remark that we can assume  $\bX_{N}$ diagonal without loss of generality. In a first step,  we  assume that the the partition function of $\mu$ is continuous and that  $\bX_N$ has bounded norm.
We fix $\varepsilon>0$ . We know by assumption that for $N$ large enough, the spectrum of $\bX_N$ is included into $[\lambda_{-\ell}-\varepsilon, \lambda_k+\varepsilon[$. 
For $j\ge -1$ we let $n_j^{\varepsilon,N}$ be the number of eigenvalues of $\bX_N$ in $[\lambda_{-\ell}+j\varepsilon, \lambda_{-\ell}+(j+1)\varepsilon[$, until the first $j$ so that $\lambda_{-\ell}+(j+1)\varepsilon\ge \lambda_k+\varepsilon$. We let $\bX_N^\varepsilon$ be the diagonal matrix with eigenvalues $\lambda_{-\ell}+j\varepsilon$ with multiplicity $n_j^{\varepsilon,N}$.  Since we assumed the extreme eigenvalues distinct, for $N$ large enough the extreme eigenvalues of $\bX_N^\varepsilon$ are $(\lambda_i, i\le k, \lambda_{- j}, j\le \ell)$.
$n_j^{\varepsilon,N}/N$ converges towards $\mu([\lambda_{-\ell}+j\varepsilon, \lambda_{-\ell}+(j+1)\varepsilon])$ since we assumed the partition function of $\mu$ to be continuous.  Hence, $\bX_N^\varepsilon$ satisfies the hypotheses of Proposition \ref{SIkd}. Moreover, by definition, for $N$ large enough we know that
$$\|\bX_N-\bX_N^\varepsilon\|\le \varepsilon$$
Therefore,
$$\left|\frac{1}{N} \log \frac{\E\left[ \exp \left( \frac{\beta N }{2}\sum_{i=- \ell ,i \neq 0}^{k}\theta_{i}\langle e_{i},\bX_{N} e_{i}\rangle
 \right) \right]}{  \E\left[ \exp \left( \frac{\beta N }{2}\sum_{i=- \ell, i \neq 0}^{k}\theta_{i}\langle e_{i},\bX_{N}^\varepsilon e_{i}\rangle
 \right) \right]}\right|\le \frac{\beta}{2}\sum|\theta_i|\varepsilon\,.$$
 On the other hand, Proposition \ref{SIkd} implies
 $$\lim_{N\rightarrow\infty} \frac{1}{N} \log \E\left[ \exp \left( \frac{\beta N }{2}\sum_{i=- \ell, i \neq 0}^{k}\theta_{i}\langle e_{i},\bX_{N}^\varepsilon e_{i}\rangle
 \right) \right]=\frac{\beta}{2}\left(\sum_{i=- \ell , i \neq 0}^{k}
J(\mu^\varepsilon,\theta_i,\lambda_{i})\right)\,
$$
with $\mu^\varepsilon=\sum_j \mu([\lambda_{-\ell}+j\varepsilon, \lambda_{-\ell}+(j+1)\varepsilon])\delta_{\lambda_{-\ell}+j\varepsilon}$. By continuity of $\mu\rightarrow J(\mu,\theta_i,\lambda_{i})$, see \cite{Ma07} or the Appendix, and the weak convergence of $\mu^\varepsilon$ towards $\mu$, the conclusion follows.

   Finally,  to remove the condition that $\mu$ has a continuous partition function we note that we can always add a small  matrix to $\bX_{N}$ and its contribution will go to zero as its norm goes to zero after $N$ goes to infinity. We again assume $\bX_N$  diagonal  and replace it by the diagonal matrix with the same 
   outliers and in the bulk the entries are added independent uniform variables with uniform distribution on $[0,\varepsilon]$. Again $\bX_N^\varepsilon-\bX_N$ has norm bounded by $\varepsilon$. Moreover, the spectral measure of $\bX_N^\varepsilon$ converges towards
   $  \mu*1_{[0,\varepsilon]}du/\varepsilon$ whose  partition function is continuous.  Hence, we can apply our result to
   this new matrix and then let $\varepsilon$ go to zero to conclude.
\end{proof}
\section{Applications to large deviations for the extreme eigenvalues of random matrices}\label{appl}

\subsection{Universality of the large deviations for the $k$ extreme eigenvalues of Wigner matrices with sharp sub-Gaussian entries}
In this  section, we prove Theorem \ref{thmuniv}.
The  proof follows the ideas of \cite{HuGu} quite closely: we simply sketch the main arguments and changes. First note that it is enough to prove a weak large deviations principle thanks to our assumption which insures that exponential tightness holds. Moreover. let $\bar \lambda^N=(\lambda_1,\ldots,\lambda_k,\lambda_{N-k},\ldots,\lambda_N)$ be the $k$ extreme eigenvalues of $\bX_N$.  To get a weak large deviations upper bound, we tilt the measure by spherical integrals as above : if $(e_i)_{-k\le i\le k}$ follows the uniform law on the set of $2k$ orthonormal vectors on the sphere ($\theta_0=0$ and $e_0=0$ is added to shorten the notations), $\theta_i $ are real numbers of the same sign than  $i\in [-k,k]$ to be chosen later, we write 
\begin{eqnarray*}
\mathbb P\left( \|\bar\lambda^N-\bar x\|_2\le \varepsilon\right)&\le& \E_{\bX_N}\left[
1_{\|\bar\lambda^N-\bar x\|_2\le \varepsilon}\frac{\E_e\left[ \exp \left( \frac{\beta N }{2}\sum_{i=-k}^{k}\theta_{i}\langle e_{i},\bX_{N} e_{i}\rangle
 \right) \right]}{\E_e\left[ \exp \left( \frac{\beta N }{2}\sum_{i=-k}^{k}\theta_{i}\langle e_{i},\bX_{N} e_{i}\rangle
 \right) \right]}\right]\\
 &\le& e^{-N\frac{\beta}{2} F(\bar x,\bar \theta)+o(\varepsilon)N }\E_{\bX_N}
\E_e\left[ \exp \left( \frac{\beta N }{2}\sum_{i=-k}^{k}\theta_{i}\langle e_{i},\bX_{N} e_{i}\rangle
 \right) \right]\\
 \end{eqnarray*}
 where  
 $$F(\bar x,\bar\theta)= \sum_{i=-k}^{k}
J(\sigma,\theta_i,x_{i})\,.$$ 
 We used in the second line  that by Theorem \ref{unifconv}, the spherical integrals are uniformly continuous and are asymptotically given by $F(\bar x,\bar\theta)$, and 
  our assumption that the spectral measure of $\bX_N$ converges towards the semi-circle law $\sigma$ faster than any exponential. Here  $o(\varepsilon)$ goes to zero when $\varepsilon$ does. We also used the bound
 \begin{equation}\label{ineq}
\frac{ \E_{\bX_N}\left[
1_{\|\bar\lambda^N-\bar x\|_2\le \varepsilon}\E_e\left[ \exp \left( \frac{\beta N }{2}\sum_{i=-k}^{k}\theta_{i}\langle e_{i},\bX_{N} e_{i}\rangle
 \right) \right]\right]}{ \E_{\bX_N}\left[\E_e\left[ \exp \left( \frac{\beta N }{2}\sum_{i=-k}^{k}\theta_{i}\langle e_{i},\bX_{N} e_{i}\rangle
 \right) \right]\right]}\le 1
 \end{equation}
 We next compute the expectation of the spherical integral by using that our entries are sharp-subgaussian :
\begin{eqnarray*}
&&\E_{\bX_N}\left[
\E_e\left[ \exp \left( \frac{\beta N }{2}\sum_{i=-k}^{k}\theta_{i}\langle e_{i},\bX_{N} e_{i}\rangle
 \right) \right]\right]\\
 &&\le \E_e[ \exp\{ \frac{\beta}{4} N\sum_{i=-k}^{k}\sum_{k\le j} 2^{k\neq j} |\sum \theta_i e_i(k)e_i(j)|^2\}]= \exp\{\frac{\beta}{4} \sum_{j=-k}^k \theta_j^2\}\end{eqnarray*}
 We hence get the upper bound
 $$\limsup_{\varepsilon\ra 0}\limsup_{N\ra\infty} \frac{1}{N}\ln \mathbb P\left( \|\bar\lambda^N-\bar x\|_2\le \varepsilon\right)\le -\frac{\beta}{2}\sup_{\theta_i}\{ \sum_{j=-k}^k \frac{\theta_j^2}{2}-F(\bar x,\bar\theta)\}$$
 where we take the supremum over non-negative $\theta_i$ for $i\in [1, k]$ and non-positive $\theta_i$'s for $i\in [-k,-1]$. Finally we observe that the supremum decouples and recall from \cite[Section 4.1]{HuGu} that the supremum over each $\theta_i$ of $\theta_i^2/2-
 J(\sigma,\theta_i,x_{i})$ gives $\int_{2}^{|x_{i}|}\sqrt{t^{2}-4}dt$. 
 To get the lower bound, we need to show that there exists $\bar\theta=(\theta_{-k},\ldots,\theta_{-1},\theta_1,\ldots,\theta_k)$ such that \eqref{ineq} is almost an equality in the sense that for every $\varepsilon>0$
 \begin{equation}\label{bbp}\liminf_{\N\ra\infty}\frac{1}{N}\ln \frac{\E_{\bX_N}\left[
1_{\|\bar\lambda^N-\bar x\|_2\le \varepsilon}\E_e\left[ \exp \left( \frac{\beta N }{2}\sum_{i=-k}^{k}\theta_{i}\langle e_{i},\bX_{N} e_{i}\rangle
 \right) \right]\right]}{ \E_{\bX_N}\left[\E_e\left[ \exp \left( \frac{\beta N }{2}\sum_{i=-k}^{k}\theta_{i}\langle e_{i},\bX_{N} e_{i}\rangle
 \right) \right]\right]}\ge 0\end{equation}
 and
 \begin{equation}\label{bia}\liminf_{\N\ra\infty}\frac{1}{N}\ln\E_{\bX_N}\left[
\E_e\left[ \exp \left( \frac{\beta N }{2}\sum_{i=-k}^{k}\theta_{i}\langle e_{i},\bX_{N} e_{i}\rangle
 \right) \right]\right]\ge \frac{\beta}{2} \sum_{j=-k}^k \theta_j^2\,.\end{equation}
 In both cases we use the fact that under the uniform measure, the vectors $e_i$ are delocalized with overwhelming probability, namely if $V_N^\kappa=\cap_{1\le i\le k}\{\|e_i\|_\infty\le N^{-1/4-\kappa}\}$ then $\mathbb P(V_N^\kappa)$ goes to one for any $\kappa\in (0,1/4)$. Therefore, to prove \eqref{bia} we notice that
 \begin{eqnarray*}
&&\E_{\bX_N}\left[
\E_e\left[ \exp \left( \frac{\beta N }{2}\sum_{i=-k}^{k}\theta_{i}\langle e_{i},\bX_{N} e_{i}\rangle
 \right) \right]\right]\\
 &\ge& \E_e\left[1_{e\in V_N^\kappa}\prod_{i\le j} \E[\exp\{ \frac{\beta}{2}N 2^{1_{i\neq j}}\sum_{r} \theta_r \Re(e_r(i)\bar  e_r(j) X_{ij})]\right]\\
 &\ge& \exp\{N\frac{\beta}{2} \sum_{r=-k}^k \theta_j^2+O(N^{1-2\kappa})\}\mathbb P(V_N^\kappa)\end{eqnarray*}
 where we used that $\sum_{r} \theta_r e_r(i)\bar  e_r(j)$ is of order at most $N^{-1/2-2\kappa}$ on $V_N^\varepsilon$ so that we can expand the Laplace transform of the entries around the origin. This proves \eqref{bia}. To prove \eqref{bbp} we notice that it is enough to show that for $N$ large enough
 $$\inf_{\bar e\in V_N^\varepsilon}\frac{\E_{\bX_N}\left[
1_{\|\bar\lambda^N-\bar x\|_2\le \varepsilon}\exp \left( \frac{\beta N }{2}\sum_{i=-k}^{k}\theta_{i}\langle e_{i},\bX_{N} e_{i}\rangle
 \right) \right]}{ \E_{\bX_N}\left[ \exp \left( \frac{\beta N }{2}\sum_{i=-k}^{k}\theta_{i}\langle e_{i},\bX_{N}e_{i}\rangle
 \right) \right]}\ge \frac{1}{2}\,.$$
But under the law tilted by $\exp \left( \frac{\beta N }{2}\sum_{i=-k}^{k}\theta_{i}\langle e_{i},\bX_{N} e_{i}\rangle
 \right) $, $\bX_N$ still has independent entries. We can compute its mean and covariance under the tilted law and using again that $\sum_{r} \theta_r e_r(i)\bar  e_r(j)$ is of order at most $N^{-1/2-2\kappa}$, we see that its mean is $\sum \theta_i e_i e_i^*$ and its covariance is close to $1/N$. We deduce as in \cite{HuGu} and the BBP transition \cite{BBP,PiReSo} that under this tilted law the outliers of $\bX_N$ are given by $\theta_i+\theta_i^{-1}$: it is therefore sufficient to choose $\theta_i=\frac{1}{2}(x_i\pm\sqrt{x_i^2-4})$. We refer the reader to \cite{HuGu} for more details. 

\subsection{Universality of the large deviations for the $k$ largest eigenvalues of Wishart matrices with sharp sub-Gaussian entries} We here prove Theorem \ref{thmuniv3}
and, as in the previous subsection, we will only sketch the changes from the proof in \cite{HuGu}. As in \cite{HuGu} we will study the largest eigenvalues of the linearized matrix $\bY_N$ of the matrix ${N^{-1/2}}\bG_{L,M}\bG_{L,M}^*$ : 
\[\bY_N = \begin{pmatrix} 0_{L \times L} & \frac{1}{\sqrt{N} } \bG_{L,M} \\\frac{1}{\sqrt{N}}\bG_{L,M}^* & 0_{M \times M} \end{pmatrix}\]
Up to a factor $(N/L)^{1/2} = ( (1+ \alpha) + o(N^{-\kappa}))^{1/2}$ $\bY_N$ is the linearization of $\bW_{L,M}$. 
 The main difference with the proof for Wigner matrices will be that computing the  asymptotics  of the annealed spherical integral requires more skill as it depends on the large deviations for the scalar products of projections of vectors uniformly distributed on the sphere : we can not merely assume that they are delocalized since this could a priori change the large deviations weight. To be more precise, let $\Lambda_N$ be the annealed spherical integral given for $\bar\theta=(\theta_1,\ldots,\theta_k)\in(\mathbb R^+)^k
 $ by

\[ \Lambda_N (\bar{\theta}) = \frac{1}{N} \log \E_{\bX_N}\left[
\E_e\left[ \exp \left( \frac{\beta N }{2}\sum_{i=1}^{k}\theta_{i}\langle e_{i},\bY_{N} e_{i}\rangle
\right) \right]\right]\,. \]

We shall prove that
\begin{equation}\label{lim}\lim_{N\rightarrow \infty}\frac{1}{N}\log \Lambda_N(\bar\theta)=\Lambda(\bar\theta)=\sum_{i=1}^k \Lambda(\theta_i)\end{equation}
with, if $\alpha'=(1+\alpha)^{-1}$, and with $\alpha$ the limit of $M/N$,
$$\Lambda(\theta)=  \sup_{  a\in ]0,1[}   \Big( \theta^2 a(1- a) + \alpha' \ln \frac{a}{\alpha'} + (1- \alpha') \ln\frac{1 - a}{1 -\alpha'} \Big)\,.$$
The above supremum is achieved at $x_{\theta,\alpha}$, as defined in Lemma 3.4 of \cite{HuGu}. 
We first prove the upper bound in \eqref{lim}.

\begin{eqnarray*}
 e^{N\Lambda_N(\bar{\theta}) }&=&  \E_e \E_{\bY} \Big[ \Big[ \exp \Big( 
 \frac{\beta N}{2}\sum_{i = 1}^{k} \theta_i \langle e_i, \bY_N e_i \rangle \Big) \Big] \Big] \\
 &=& \E_e \E_{\bY}\Big[ \Big[ \exp \Big( \frac{\beta}{2}
 \sum_{i = 1}^{k} \sum_{l=1,...,L \atop m = L+1,...,N} \theta_i  \sqrt{N} \Re(e_i(l) \bar{e}_i(m) X_{l,m} )\Big) \Big] \Big] \\
 & \leq &\E_e \Big[ \exp \Big( \frac{\beta N}{4} \sum_{l=1,...,L \atop m = L+1,...,N}\sum_{i,j = 1}^{k} \theta_i \theta_j \Re (e_i(l) \bar{e}_i(m)\bar{e}_j(l) e_j(m)) \Big) \Big]
 \end{eqnarray*}
 where we used that the entries are sharp sub-Gaussian. 
Now, let us call $e^{(1)}$ the vector of $\C^L$ whose coordinnates are the $L$ first coordinates of $e$ and $e^{(2)}$ the vector of $\C^M$ whose coordinates are the $M$ last of $e$. If we let $\psi^{(p)}_{l,m} = \langle e_l^{(p)},e_m^{(p)} \rangle$, the upper bound gives :
\begin{eqnarray*}
\Lambda_N(\bar{\theta}) &\leq&\frac{1}{N}\log \E_e \Big[ \exp \Big( \frac{\beta N}{2} \sum_{i,j = 1}^{k} \theta_i \theta_j  \psi^{(1)}_{i,j} \psi^{(2)}_{j,i} \Big) \Big]
\end{eqnarray*}
but since the $e_i$ are unitary and orthogonal , if we let $\Psi^{(p)} = ( \psi^{(p)}_{i,j})_{1\leq i,j \leq k }$ we have $\Psi^{(1)} + \Psi^{(2)} = I_{2k}$ and so $\psi^{(1)}_{i,j} \psi^{(2)}_{j,i} = \psi^{(1)}_{i,j} (\mathds{1}_{i = j} -  \bar{\psi}^{(1)}_{i,j})$. Furthermore the $\Psi^{(1)}$ is an element of a Jacobi ensemble as the following lemma states : 
\begin{lemma}\label{Jacobi}
The distribution of the matrix $\Psi^{(1)}$ when $ N > k$ is given by the following density for the Lebesgue measure on the set of symmetric/Hermitian matrices : 
\[ \frac{1}{Z} \det(\Psi^{(1)})^{\beta \frac{L -k +1}{2} - 1} \det( I_{k} - \Psi^{(1)})^{\beta \frac{M -k +1}{2} - 1} \mathds{1}_{ 0 \leq \Psi^{(1)} \leq I_k} d \Psi^{(1)} \]
\end{lemma}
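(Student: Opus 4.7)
The plan is to realize the uniformly distributed orthonormal $k$-frame $[e_1|\cdots|e_k]$ via a Gaussian construction and then recognize $\Psi^{(1)}$ as a classical matrix Beta (Jacobi) variable. First I would let $G$ be an $N \times k$ matrix with i.i.d.\ standard Gaussian entries (real for $\beta=1$, complex for $\beta=2$). Since $N > k$, $G^*G$ is almost surely invertible, so we may set $E := G(G^*G)^{-1/2}$. The columns of $E$ are orthonormal, and the law of $E$ is invariant under left multiplication by any orthogonal/unitary matrix by rotation invariance of the Gaussian law; hence $E$ has the same distribution as $[e_1|\cdots|e_k]$.

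Next I would split $G$ into its top $L$ rows $G^{(1)}$ and its bottom $M$ rows $G^{(2)}$, which are independent Gaussian matrices. Setting $A := (G^{(1)})^*G^{(1)}$ and $B := (G^{(2)})^*G^{(2)}$ yields independent $\beta$-Wishart matrices with $L$ and $M$ degrees of freedom respectively, and $G^*G = A + B$. Since $E^{(1)} = G^{(1)}(G^*G)^{-1/2}$, this immediately gives
$$\Psi^{(1)} = (E^{(1)})^*E^{(1)} = (A+B)^{-1/2}\,A\,(A+B)^{-1/2}.$$

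It remains to identify the law of $(A+B)^{-1/2}A(A+B)^{-1/2}$ for independent Wishart matrices of the above sizes. I would perform the change of variables $(A,B)\mapsto(S,\Psi)$ given by $S=A+B$, $A = S^{1/2}\Psi S^{1/2}$, $B = S^{1/2}(I_k - \Psi)S^{1/2}$. The Jacobian of this map on the space of symmetric/Hermitian $k\times k$ matrices is a power of $\det(S)$ whose exponent depends only on $\beta$ and $k$; combined with the two Wishart densities, one checks that the joint density of $(S,\Psi)$ factorizes, showing that $S$ and $\Psi$ are independent and that $\Psi$ has density proportional to
$$\det(\Psi)^{\beta(L-k+1)/2-1}\det(I_k-\Psi)^{\beta(M-k+1)/2-1}\mathds{1}_{0\le\Psi\le I_k}.$$

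The only delicate point is the Jacobian bookkeeping in this last change of variables; it is however entirely standard (see, e.g., Muirhead's \emph{Aspects of Multivariate Statistical Theory} or Forrester's \emph{Log-gases and random matrices}), so rather than re-derive it I would simply invoke the classical matrix Beta / Jacobi ensemble density.
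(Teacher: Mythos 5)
Your proof is correct, but it follows a genuinely different route from the paper's. The paper identifies $\Psi^{(1)}$ directly as $U_1^*U_1$ where $U_1$ is the top-left $L\times k$ block of a Haar orthogonal/unitary matrix $U$, then rewrites this as $\Pi' U^* \Pi U \Pi'$ and invokes the result of Collins (\cite[Theorem 2.2]{Col05}) on the law of such compressed Haar blocks. You instead realize the uniform $k$-frame through the Gaussian construction $E=G(G^*G)^{-1/2}$, split $G$ into independent top and bottom Gaussian blocks to write $\Psi^{(1)}=(A+B)^{-1/2}A(A+B)^{-1/2}$ with $A,B$ independent Wishart, and then perform the classical matrix Beta change of variables $(A,B)\mapsto(S,\Psi)$. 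Both routes are standard and equivalent in spirit (Collins' theorem is itself close in flavor to the Wishart computation), but yours is more self-contained and makes the independence structure manifest, while the paper's is shorter because it outsources the density computation to a cited theorem. One point worth flagging for either argument: the hypothesis ``$N>k$'' stated in the lemma is not by itself enough for the displayed density to be finite and normalizable; one really needs $L\ge k$ and $M\ge k$ (so that both Wishart blocks $A$ and $B$ are nonsingular and the exponents in $\det(\Psi^{(1)})$ and $\det(I_k-\Psi^{(1)})$ are integrable). This is harmless in context since $L,M\to\infty$ with $k$ fixed, but your Gaussian/Wishart derivation makes this requirement explicit, which is a small clarity advantage.
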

\begin{proof}
	Let $U$ be a orthohogonal/unitary $N \times N$ Haar matrix, $U_1$ its $ L \times k$ top left block. Then $\Psi^{(1)}$ has the same law as $U_1^* U_1$. If we denote $\Pi$ the matrix $diag(\underbrace{1,...,1}_{\text{L times}},0,...0)$ and $\Pi '$ the matrix $diag(\underbrace{1,...,1}_{\text{k times}},0,...0)$, then $U_1^* U_1 = \Pi' U^* \Pi U \Pi '$. Then we can apply \cite[Theorem 2.2]{Col05} (up to  adapt this theorem  to the real case). 
\end{proof}

Therefore, using Laplace's method, we see that the distribution of $\Psi^{(1)}$ satisfies  a large deviations principle with rate function $I$ : 
\[ I(M) =\begin{cases} - \frac{\beta}{2} \Big[ \frac{1}{1+ \alpha} \ln \det ( M) + \frac{\alpha}{1 + \alpha} \ln \det (I_{k} - M) \Big]- Z   \text { if } 0 \leq M \leq I_{k}, \\
+ \infty \text{ otherwise.} \end{cases} \]
where $Z$ is such that $\min I = 0$. As a consequence, 
Varadhan's lemma implies that 
\[ \limsup \Lambda_N(\bar{\theta}) \leq  \Lambda(\bar{\theta}) \]
where :

\[ \Lambda(\bar{\theta}) = \sup_M [f(M) - I(M)] \]
with $f(M) =  \frac{\beta}{4} \sum_{i,j = 1}^{k} \theta_i \theta_j  M_{i,j} (I_{k} - M)_{j,i} $. We notice by taking $M=\alpha' I_k$ that
\[ Z \leq - \frac{k \beta}{2} \Big( \alpha \log \alpha' + (1- \alpha') \log (1- \alpha') \Big)\,.\] 
On the other hand, because $\det(M)\le \prod M_{ii}$ for any positive self-adjoint matrix $M$, 
\[ I(M) \geq  - \frac{\beta}{2} \Big[ \sum_{i = 1}^{k} \{\alpha' \ln(M_{i,i}) + (1- \alpha') \ln ( 1 - M_{i,i})\} \Big]+ Z \]
whereas $f(M)\ge  \frac{\beta}{4} \sum_{i= 1}^{k} \theta_i \theta_j  M_{i,i} (I_{k} - M)_{i,i}
$ since the off-diagonal terms are non-positive (because $M$ is symmetric and the $\theta_i$'s non-negative). We deduce (with $M_{i,i}=a_i$) that
\[
 \Lambda(\bar{\theta}) \le  \frac{\beta}{2} \sup_{  (a_i)_{i=1}^{k} \in ]0,1[^{k}} \sum_{i=1}^k  \Big( \theta_i^2 a_i (1- a_i) + \alpha' \ln \frac{a_i}{\alpha'} + (1- \alpha') \ln\frac{1 - a_i}{1 -\alpha'} \Big) = \sum_{i = 1}^k \Lambda(\theta_i)\]

To obtain the lower bound on  $\liminf_N \Lambda_N(\bar{\theta})$ as in \cite{HuGu}, it is enough to find a sequence of events $V_N^{\kappa}$ independent of $\Psi^{(1)}$ such that on these events $|e_i(l)| \leq C N^{-1/4 - \kappa}$ for some $\kappa>0$ and all $i$ and $l$ since then we will be in the regime where the sharp sub-Gaussian bound is also a lower bound. Note here that $\Psi^{(2)}$ is determined by $\Psi^{(1)}$, so we only condition on $\Psi^{(1)}$. To do that let us denote $U$ the $k\times L$ matrix with column vectors $(e^{(1)}_i, 1\le i\le k)$. Then 
$$U= (\Psi^{(1)} )^{1/2}V$$
and conditionally  to $\Psi^{(1)}$, $V=(v_1,\ldots, v_k)$ follows the uniform law on the set of $k$ orthonormal vectors on the sphere $\mathbb S_L$. We can then let $V_N^\kappa=\{\max_i \max_l |v_i(l)|\le N^{-1/4 - \kappa}\}$. On this set, $\max_i \max_l |e_i(l)| \leq C N^{-1/4 - \kappa}$ so that
\begin{eqnarray*}
 \Lambda_N(\bar{\theta}) &\ge &  \E_e \Big[1_{e \in V_N^\kappa} \exp \Big( \frac{\beta N}{4} (1+o(1))
  \sum_{l=1,...,L \atop m = L+1,...,N}\sum_{i,j = 1}^{k} \theta_i \theta_j \Re (e_i(l) \bar{e}_i(m)\bar{e}_j(l) e_j(m)) \Big) \Big]\\
  &=&  \E_e \Big[1_{e \in V_N^\kappa} \exp \Big( \frac{\beta N}{2} \sum_{i,j = 1}^{k} \theta_i \theta_j  \psi^{(1)}_{i,j} \psi^{(2)}_{j,i} \Big) \Big]
 \end{eqnarray*}
 where we expended the Laplace transform of the entries close to the origin. We finally notice that $V_N^\kappa$ is independent of $\Psi^{(1)}$ and with probability going to one. We can therefore apply the large deviations principle to deduce that
 \[ \limsup \Lambda_N(\bar{\theta}) \geq \sup_M [f(M) - I(M)] \,.\]
We finally conclude by taking $M$ diagonal that the above right hand side is bounded below by $\sum \Lambda(\theta_i)$, which completes the proof of \eqref{lim}.  
 To deduce the large deviations principle for the $k$ largest eigenvalues of Wishart matrices, we first obtain a large deviations upper bound by tilting the measure by the $k$-dimensional spherical integral. Because it factorizes as well as $\Lambda(\bar \theta)$ the upper bound has a rate function given by the sum of the rate functions for each outliers. To obtain the large deviations upper bound,  we tilt again  the measure by $\exp( \frac{\beta N}{2} \sum_{i = 1}^{k} \theta_i \langle e_i, \bY_N e_i \rangle)$ with $e_i \in V_N^{\epsilon}$. Under this tilted measure, we have the following expectations : 
 $$\E^{(e, \theta)}[\bY_N] = \sum_{i= 1}^k \theta_i \Big( e_i^{(1)}(e_i^{(2)})^* +e_i^{(2)}(e_i^{(1)})^* \Big) $$ ( where we identify $\C^L$ and $\C^M$ respectively with $\C^L \times \{0\}^M$ and $\{0\}^L \times\C^M $).  We can then write
\[ \bY_N = \tilde{\bY}_N + \sum_{i= 1}^k \theta_i \Big( e_i^{(1)}(e_i^{(2)})^* +e_i^{(2)}(e_i^{(1)})^* \Big) + o(1) \]
where $\tilde{\bY}_N$ has the same form as $\bY_N$ under the original measure. Then to identify the eigenvalues of $\bY_N$ outside the bulk of the limit measure we need to solve the following equation 
\[ \det\Big(  I_N + (\tilde{Y}_N - z)^{-1}\sum_{i= 1}^k \theta_i \big( e_i^{(1)}(e_i^{(2)})^* +e_i^{(2)}(e_i^{(1)})^* \big) \Big)= 0\]
Note that the above arguments also show that 
in $\Pp^{\bar{\theta}}$-probability $\Psi^{(1)}$ converges towards the diagonal matrix with entries $(x_{\theta_i,\alpha})_{1\le i\le k}$.  We also have 
local laws for $(z -\tilde{Y}_N)^{-1}$ under $\Pp^{\bar{\theta}}$. Therefore, if we denote $\tilde {\lambda_{+}} = \sqrt{(1+ \alpha)^{-1}\lambda_{+}}$ (which is the rightmost point of the support of the limit measure of $\bY_N$),
 the left hand side converges uniformly on any band $\{ z \in \C : \tilde \lambda_{+} + \epsilon \leq \Im z \leq A, |\Re z | \leq 1 \}$ toward :
\[ g : z \mapsto \prod_{i =1}^k \Big( 1 - \theta^2 z^2 x_{\theta_i,\alpha} (1 - x_{\theta_i,\alpha}) (1+ \alpha)^2  G_{MP(\alpha)}((1+ \alpha) z^2) G_{MP( 1/ \alpha)}((1+ \alpha) z^2) \Big) \]
where $MP(\alpha)$ is the Marchenko-Pastur distribution of parameter $\alpha$. Using the fact the these functions are holomorphic, we have the $k$ largest eigenvalue converges toward $z_{\theta_{1},\alpha} \geq  z_{\theta_{k},\alpha}$ where $z_{\theta,\alpha}$ is defined as the unique solution of 
\[ 1 - \theta^2 x_{\theta,\alpha} (1 - x_{\theta,\alpha}) (1+ \alpha)^2 z^2 G_{MP(\alpha)}((1+ \alpha) z^2) G_{MP( 1/ \alpha)}((1+ \alpha) z^2) = 0\]
on $] \tilde \lambda_+, + \infty [$ (see \cite{HuGu} for details).

\subsection{ Universality of the large deviations for the $k$ largest eigenvalues of Hermitian matrices with variance profiles  and sharp sub-Gaussian entries}
We consider in this section the setting of  Theorem \ref{thmuniv3}, which generalizes the previous subsection. We will proceed as in \cite{Hu} and we will first deal with the piecewise constant case with the supplementary technical assumption that the variance profile is non-negative. 

The main point is to prove the following estimate for the annealed spherical integral. 

\begin{lemma} Let $$\Lambda_N^\sigma(\bar{\theta}) =\frac{1}{N}\log  \E_{\bX,e}[\exp(N \sum_{i=1}^k \theta_i \langle e_i, \bX_N^\sigma e_i \rangle )]$$
Then, let $\sigma$ be piecewise constant and under the assumptions of Theorem \ref{thmuniv3}, for all $\theta_i\in\mathbb R^+$
$$\lim_{N\ra\infty}\Lambda^\sigma_N(\bar \theta)=\sum_{i=1}^k \Lambda^\sigma(\theta_i)$$
with, if $R_{ij}:=\sigma_{ij}^2$,
\[\Lambda^\sigma(\theta)= \frac{\beta}{2} \sup_{ \psi \in S } \Big[\frac{\theta^2}{2} \langle \psi_j, R \psi_j \rangle +  \sum_{i=1}^p \alpha_i \log \frac{\psi(j)}{\alpha_i} \Big] \]
\end{lemma}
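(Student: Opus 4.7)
My plan is to extend the proof of the Wishart estimate \eqref{lim} from $p=2$ blocks to general $p$, with Assumption \ref{Neg} playing the role that forces the variational problem to decouple over the $k$ outlier directions. I will treat the piecewise constant case first; the continuous case should then follow by approximating $\sigma$ by step functions and using continuity of the rate function.

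\textbf{Upper bound.} Integrating the sharp sub-Gaussian entries $X_{lm}$ out via \eqref{eq-ssg} yields
\[ \Lambda^\sigma_N(\bar\theta)\ \le\ \frac{1}{N}\log \E_e\Big[\exp\Big(\tfrac{\beta N}{4}\textstyle\sum_{l,m}\sigma_N(l,m)^2\,\big|\sum_i\theta_i e_i(l)\overline{e_i(m)}\big|^2\Big)\Big]. \]
Since $\sigma_N$ is constant on each product $I^k_N\times I^{k'}_N$, the $(l,m)$-sum groups block by block; writing $\Psi^{(k)}_{ij}=\langle e_j^{(k)},e_i^{(k)}\rangle$ for the Gram matrix of the block restrictions $e_i^{(k)}=(e_i(l))_{l\in I^k_N}$, the exponent collapses to
\[ f(\Psi)\ :=\ \tfrac{\beta}{4}\sum_{k,k'=1}^p \sigma^2_{k,k'}\,\tr\big(\Theta\Psi^{(k)}\Theta\Psi^{(k')}\big),\qquad \Theta=\diag(\theta_1,\ldots,\theta_k). \]
The family $(\Psi^{(k)})_{k=1}^p$ lies in the matrix simplex $\{\Psi^{(k)}\succeq 0:\sum_k \Psi^{(k)}=I_k\}$, and by realising $(e_i)_{i\le k}$ as the first $k$ columns of a Haar matrix and applying the block version of \cite[Theorem 2.2]{Col05} (as in Lemma \ref{Jacobi}) its joint density is proportional to $\prod_k\det(\Psi^{(k)})^{\beta(N_k-k+1)/2-1}$. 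Laplace's method provides a large deviations principle at scale $N$ with good rate function $I(\Psi)=-\tfrac{\beta}{2}\sum_k\alpha_k\log\det(\Psi^{(k)}/\alpha_k)-Z$, and Varadhan gives $\limsup_N \Lambda^\sigma_N(\bar\theta)\le \sup_\Psi[f(\Psi)-I(\Psi)]$.

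\textbf{Reduction to diagonal and decoupling.} The heart of the argument is to show this supremum is attained when each $\Psi^{(k)}$ is diagonal. Split $f$ into $i=j$ and $i\neq j$ contributions. Using $\Psi^{(k)}_{ji}=\overline{\Psi^{(k)}_{ij}}$ and the orthonormality constraint $\sum_k\Psi^{(k)}_{ij}=0$ for $i\neq j$, the off-diagonal part equals $\tfrac{\beta}{2}\sum_{i<j}\theta_i\theta_j\,\langle\psi^{ij},R\psi^{ij}\rangle$ where $R=(\sigma^2_{k,k'})$ and $\psi^{ij}=(\Psi^{(k)}_{ij})_k\in\mathrm{Vect}(1,\ldots,1)^{\bot}$. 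Assumption \ref{Neg} makes this quadratic form non-positive, and since the $\theta_i\theta_j\ge 0$, the off-diagonal contribution to $f$ is $\le 0$. Simultaneously, Hadamard's inequality $\log\det(\Psi^{(k)})\le\sum_i\log\Psi^{(k)}_{ii}$ shows that $I(\Psi)\ge I(\Psi_{\mathrm{diag}})$, so both effects push the supremum towards diagonal matrices. Writing $\psi^{(k)}_i=\Psi^{(k)}_{ii}$, the orthonormality constraint degenerates into $\sum_k\psi^{(k)}_i=1$ for each $i$, so the variational problem factorises over $i\in\{1,\ldots,k\}$ and each factor is exactly $\Lambda^\sigma(\theta_i)$.

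\textbf{Matching lower bound.} Following the Wishart subsection, restrict the $e$-integration to the delocalisation event $V_N^\kappa=\{\max_i\|e_i\|_\infty\le N^{-1/4-\kappa}\}$. Decomposing each block restriction as $e^{(k)}=(\Psi^{(k)})^{1/2}V^{(k)}$ with conditionally Haar $V^{(k)}$'s makes $V_N^\kappa$ essentially independent of $(\Psi^{(k)})_k$ and of probability tending to $1$; on this event, the Laplace transform of each sub-Gaussian entry agrees with its sharp sub-Gaussian upper bound to leading order, yielding $\liminf_N \Lambda^\sigma_N(\bar\theta)\ge \sup_\Psi[f(\Psi)-I(\Psi)]=\sum_i\Lambda^\sigma(\theta_i)$ by plugging in the diagonal optimiser as a test point. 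The main obstacle I expect is the careful derivation of the matrix-Dirichlet law of $(\Psi^{(k)})_k$ and its LDP when $p>2$ — in the Wishart setting one has a single Jacobi matrix, whereas here the $p$ coupled matrices need a joint description via a block Haar decomposition; the rest of the argument is then a clean interplay between Hadamard and Assumption \ref{Neg}.
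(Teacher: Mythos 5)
Your proposal follows the paper's own proof essentially step for step: sharp sub-Gaussian integration, the block Gram matrices $\Psi^{(k)}$ summing to $I_k$, their matrix-Dirichlet density via a block version of Collins' theorem (the paper's Lemma \ref{Jacobi} and its $p$-block refinement), Varadhan's lemma, Hadamard's inequality to control the entropic part, Assumption \ref{Neg} with $\sum_k\Psi^{(k)}_{ij}=0$ ($i\neq j$) to kill the off-diagonal contribution, and a delocalized-event lower bound using the decomposition $e^{(k)}=(\Psi^{(k)})^{1/2}V^{(k)}$. The only improvement to note is that the "main obstacle" you flag — the joint density and LDP of the $p$ coupled Gram matrices — is exactly what the paper resolves by extending \cite[Theorem 2.2]{Col05} to $p$ blocks, so your instinct about where the work lies is correct.
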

Indeed, let us define for $e \in \Ss^{\beta N -1}$ and $j \in [ 1, p]$, $e^{(p)}$ the vector of $\C^{\alpha_i(N)}$ whose coordinates are the coordinates of $e$ whose indices lie in $I_N^i$. We then define for $j= 1,...,p$ the random matrix $\Psi^{(j)} = ( \langle e^{(j)}_i, e^{(j)}_j \rangle)$. Following the same computations as before and using the sharp sub-Gaussian character of the entries, we have :

\[ \Lambda^\sigma_N(\bar{\theta}) \leq \E_{e}\Big[\exp \Big ( \frac{\beta N }{4} \sum_{l,m=1}^k \sum_{i,j = 1}^p \theta_l \theta_m \Psi^{(i)}_{l,m} \bar{\Psi}^{(j)}_{l,m} \sigma_{i,j}^2 \Big) \Big] \]

 Notice that the $\Psi^{(j)}$ are Gram matrices (hence self-adjoint and positive) and that their sum is $I_k$.
There again we will use a slightly improved version of the Lemma \ref{Jacobi} to determine the distribution of the $\Psi^{(j)}$ : 
\begin{lemma}
	The joint distribution of the matrices $\Psi^{(1)},...,\Psi^{(p-1)}$ when $ \alpha_1(N),...,\alpha_p(N) > k$ is given by the following density for the Lebesgue measure on the set of symmetric/Hermitian matrices : 
	\[ \frac{1}{Z} \prod_{i=1}^{p-1} \Big( \mathds{1}_{ 0 \leq \Psi^{(i)}}\det(\Psi^{(i)})^{\beta \frac{\alpha_i(N) -k +1}{2} - 1} \Big)\det( I_{k} - \sum_{i=1}^{p-1} \Psi^{(i)})^{\beta \frac{ \alpha_p(N) -k +1}{2} - 1} \mathds{1}_{ \sum_{i=1}^{p-1} \Psi^{(i)} \leq I_k} \prod_{i=1}^{p-1} d \Psi^{(i)} \]
\end{lemma}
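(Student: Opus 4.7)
The plan is to induct on $p$, with base case $p=2$ being exactly Lemma \ref{Jacobi}. I would first realize the frame $(e_1,\ldots,e_k)$ as the first $k$ columns of an $N\times N$ Haar-distributed orthogonal (resp.\ unitary) matrix $U$. Writing $U_j$ for the $\alpha_j(N)\times k$ submatrix consisting of the rows in $I_N^j$ and the first $k$ columns, we have $\Psi^{(j)}=U_j^*U_j$, and $\sum_{j=1}^p\Psi^{(j)}=I_k$ by orthonormality of the $e_i$'s.

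For the inductive step, I would condition on $\Psi^{(1)}$, whose marginal density is already given by Lemma \ref{Jacobi} (with $L=\alpha_1(N)$, $M=N-\alpha_1(N)$). Setting $G:=I_k-\Psi^{(1)}$, the central structural observation is that the matrix
\[ W := \begin{pmatrix} U_2\\ \vdots\\ U_p\end{pmatrix} G^{-1/2} \]
is, conditionally on $\Psi^{(1)}$, Haar-distributed on the Stiefel manifold of orthonormal $k$-frames in $\C^{N-\alpha_1(N)}$ (resp.\ $\R^{N-\alpha_1(N)}$). This follows from bi-invariance of Haar measure under left multiplication by block orthogonal/unitary matrices acting trivially on $I_N^1$, together with the polar decomposition of the complementary block. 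Partitioning $W$ into subblocks $W_j$ of sizes $\alpha_2(N),\ldots,\alpha_p(N)$, the inductive hypothesis (applied to a $(p-1)$-block system of total dimension $N-\alpha_1(N)$) yields the joint density of the auxiliary matrices $\tilde\Psi^{(j)}:=W_j^*W_j$ for $j=2,\ldots,p-1$.

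The final step is the change of variables $\tilde\Psi^{(j)}=G^{-1/2}\Psi^{(j)}G^{-1/2}$ for $j=2,\ldots,p-1$. By diagonalizing $G$, the Jacobian of each factor is $\det(G)^{\kappa}$ with $\kappa=\beta(k-1)/2+1$ (i.e.\ $\kappa=(k+1)/2$ for real symmetric and $\kappa=k$ for Hermitian matrices). Using that $\det(\tilde\Psi^{(j)})=\det(\Psi^{(j)})/\det(G)$ and $\det(I_k-\sum_{j=2}^{p-1}\tilde\Psi^{(j)})=\det(I_k-\sum_{j=1}^{p-1}\Psi^{(j)})/\det(G)$, and multiplying by the marginal density of $\Psi^{(1)}$ from Lemma \ref{Jacobi}, one assembles a joint density proportional to the announced product with a residual factor $\det(I_k-\Psi^{(1)})$ raised to the exponent
\[ \tfrac{\beta}{2}\bigl((N-\alpha_1(N))-k+1\bigr)-1-\sum_{j=2}^{p}\bigl[\tfrac{\beta}{2}(\alpha_j(N)-k+1)-1\bigr]-\kappa(p-2). \]
Since $\sum_{j=2}^{p}\alpha_j(N)=N-\alpha_1(N)$, this collapses to $(p-2)[\beta(k-1)/2+1-\kappa]=0$, so the spurious $\det(G)$-factors disappear and the announced density emerges.

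The only nontrivial point, and the main obstacle, is this exponent bookkeeping: the cancellation is tight, and is forced precisely by the dimension count $\kappa=\beta(k-1)/2+1$ of the space of $k\times k$ self-adjoint matrices under the conjugation action $A\mapsto G^{1/2}AG^{1/2}$. The conditional Haar property of $W$ in step two is a standard consequence of the Haar invariance, and the inductive structure carries no further subtlety once the base case $p=2$ is in hand.
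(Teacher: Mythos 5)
Your proof is correct, but it follows a genuinely different route from the paper's. The paper proceeds as in Collins \cite[Theorem 2.2]{Col05}: it represents the ensemble $(\Psi^{(i)})_{i\le p-1}$ in law as $(\Sigma^{-1/2}X_i\Sigma^{-1/2})_{i\le p-1}$ with $X_i$ independent Wishart matrices of parameters $(k,\alpha_i(N))$ and $\Sigma=\sum_i X_i$, and reads off the density from the Wishart densities by a global change of variables; the multi-block case is thus handled in one stroke as the direct analogue of the two-block argument, deferring all the work to the Gaussian representation. You instead induct on the number of blocks $p$, taking the two-block case (Lemma \ref{Jacobi}) as the base, reducing to a $(p-1)$-block system of dimension $N-\alpha_1(N)$ via the conditional Haar distribution of $W=(U_2^T,\ldots,U_p^T)^T(I_k-\Psi^{(1)})^{-1/2}$ on the Stiefel manifold, and then undoing the normalization by a self-adjoint congruence with Jacobian $\det(I_k-\Psi^{(1)})^{\kappa}$, $\kappa=\beta(k-1)/2+1$. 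Your exponent bookkeeping is right: the residual power of $\det(I_k-\Psi^{(1)})$ does collapse to $(p-2)\bigl[\beta(k-1)/2+1-\kappa\bigr]=0$ (using $\sum_{j\ge2}\alpha_j(N)=N-\alpha_1(N)$), and the indicator constraints also assemble correctly. The trade-off is essentially aesthetic: the paper's route exploits the Wishart machinery already needed for the $p=2$ case and gives a one-shot argument, whereas yours is self-contained once $p=2$ is granted and makes the determinant-exponent mechanics transparent, at the cost of the conditional-Haar and Jacobian lemmas.
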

\begin{proof}
	Here we need an improved version of \cite[Theorem 2.2]{Col05} which states as follows.
		Let $U$ be a $N \times N$ Haar-distributed orthogonal or unitary matrix, $n_0 = 0 < n_1 < ...<   n_p = N$ a $p$-uplet of integers and for $i \in [1,p]$, $\tilde{\pi}_i$ the orthogonal projection on the vector span of the columns of $U$ with indices between $n_{i-1} + 1$ and $n_i$. Let $\pi$ be a constant projection of rank $k$. Then, if we identify $\pi S_N(\R) \pi$ (respectively $\pi H_N(\C) \pi$) to $S_k(\R)$ (respectively $H_k( \C)$), the joint distribution of $(M_1,...,M_{p-1}) = \pi \tilde{\pi}_1 \pi,...,\pi \tilde{\pi}_{p-1} \pi$ has the following density on $S_N(\R)^{p-1}$ ( resp. $H_N(\C)^{p-1}$) : 
		\[ \frac{1}{Z} \prod_{i=1}^{p-1} \Big( \mathds{1}_{ 0 \leq M_i}\det(M_i)^{\beta \frac{m_i -k +1}{2} - 1} \Big)\det( I_{k} - \sum_{i=1}^{p-1} M_i)^{\beta \frac{ m_p -k +1}{2} - 1} \mathds{1}_{ \sum_{i=1}^{p-1} M_i \leq I_k} \prod_{i=1}^{p-1} d M_i \]
		where $m_i = n_i - n_{i-1} $.

	The proof of this result is the same as the proof of \cite[Theorem 2.2]{Col05}. The difference is that one need to prove that $(\pi \tilde{\pi}_i \pi)_{1 \leq p -1}$ has the same law as $( \Sigma^{- 1/2} X_i \Sigma^{-1/2})$ where the $X_i$ are independent Gaussian Wishart of parameters $(k, m_i)$ and $\Sigma = X_1 + ...+ X_{p}$. 
Once we have this result, we take a Haar-distributed unitary matrix  $U$ and we denote $U_i$ the $ \alpha_i(N) \times k$ matrix extracted from $U$ by taking its $k$ first columns and its rows of indices in $I_N^i$. We denote $ \Pi' = \diag(\underbrace{1,...,1}_{ \text{ k times}}, 0, ..,0)$ and $\Pi_i$ the diagonal matrix with entries equal to $1$ for indices in $I_N^i$ and $0$ elsewhere. Then, since $(\Psi^{(i)})_{1 \leq p -1}$ has the same law as $( \Pi' U^* \Pi_i U \Pi')_{i \leq p-1}$, we can use the previous theorem. 
\end{proof}
We deduce from this explicit distribution of  the $p-1$-uplet $( \Psi^{(i)})_{1\leq i  \leq p}$ that it  follows a large deviations principle with rate function : 

\[ I( (M_i)_{1 \leq i \leq p-1})  = \begin{cases} 
 - \frac{\beta}{2} \Big [ \sum_{i=1}^{p} \alpha_i \ln \det(M_i)  \Big] - C \text{ if } \forall i\in [1,p], 0 \leq M_i \leq I_k, \text { and } \sum_{i=1}^p M_i = I_k \\
  + \infty \text{ otherwise.}
  \end{cases}
  \]
 Then we have using Varadhan's lemma : 
 
 \[ \limsup_N \Lambda^\sigma_N(\bar{\theta}) \leq \Lambda^\sigma( \bar{\theta}) \]
 where : 
 
 \[ \Lambda^\sigma(\bar{\theta}) = \sup_{(M_i)_{1\leq i \leq p}}[ f( (M_i)) - I( (M_i)) ] \]
 with $f((M_i)) = \frac{\beta}{4} \sum_{i,j=1}^{k} \theta_i \theta_j \langle M^{i,j}, R M^{i,j} \rangle$ where $R$ is the $p \times p$ matrix $( \sigma_{i,j}^2)$ and $M^{i,j}$ is the vector $(M_1(i,j),...,M_p(i,j))$. But, as before,  if $d(M)$ represents the diagonal matrix with entries $(M_{ii})_{1\le i\le k}$ we have that 
 $$I((M_i)) \geq I((d(M_i))) \mbox{ and for }i \neq j \langle M^{i,j}, R M^{i,j} \rangle \leq 0$$
 where the last inequality is due to Assumption \ref{Neg} which implies $\sum_{i=1}^p M_i=I$ and therefore that for $i\neq j$
 $\sum_{l=1}^p M^{i,j} =0$. Therefore we can again restrict the $\sup$ to diagonal matrices and it then decouples into 
 
 \[ \Lambda^\sigma(\bar{\theta}) = \frac{\beta}{2} \sup_{ (\psi_i)_{1 \leq i \leq k} \in S^{k}} \Big[ \sum_{j=1}^k \frac{\theta_i^2}{2} \langle \psi_j, R \psi_j \rangle + \sum_{j=1}^k \sum_{i=1}^p \alpha_i \log \frac{\psi_i(j)}{\alpha_i} \Big] = \sum_{j=1}^k \Lambda^\sigma( \theta_j) \]
 where $S := \{ \psi \in (\R^{+})^p : \psi_1 + ... + \psi_p =1 \}$. 
 In particular, since the function $\psi \mapsto \langle \psi, R \psi \rangle$ is concave on $S$ thanks to Assumption \ref{Neg}, the function optimized is strictly concave and thus is maximum at a unique  $\psi$. Furthermore $\psi_j$ only depends on $\theta_j$ so that we will denote $\psi_{j}= \psi_{\theta_j}$. Using again the strict concavity and the implicit function theorem, we have that the function $\theta \mapsto \psi_{\theta}$ is analytic in $\theta$. Furthermore, if we tilt our measure by $\E_{\bX}[ \exp( N \sum \theta_i \langle e_i, \bX_N^\sigma e_i \rangle )]$, the $\Psi^{(i)}$'s follow a large deviations principle and converges respectively toward $\diag( \psi_{\theta_1}(i),...,\psi_{\theta_k}(i))$. 
 For the lower bound we restrict the integral as in the preceding subsection to delocalized vectors with fixed $\Psi$ and conclude similarly. 
 
 To prove the large deviations principle, we first observe that the large deviations upper bound is direct after a tilt by spherical integrals and decoupling of the annealed spherical integrals.
  For the large deviations lower bound, we tilt  by $\exp( N \sum_{i=1}^k \theta_i \langle e_i, \bX_N^\sigma e_i \rangle)$. Under this tilted measure $\Pp^{e, \bar{\theta}}$, we have the following expectation $\E^{e, \bar{\theta}}[ \bX_N^\sigma] = \sum_{i=1}^k \theta_i \sum_{l,m=1}^p \sigma_{l,m}^2 e_i^{(l)} (e_i^{(m)})^*$. Using the BBP transition phenomenon, the local law for $\bX_N$ as in \cite[Lemma 5.6]{Hu} and the fact that the $\Psi^{(k)}$ converges in $\Pp^{\bar{\theta}}$ - probability, we have that the eigenvalues outside the bulk are asymptotically solution of the following equation in $z$ : 
 \[ \prod_{i=1}^k \det ( I_p - \theta_i R D(\theta_i,z)) = 0 \]
 where $D(\theta,z)$ is defined as in \cite[Section 5]{Hu}.
 To conclude, it suffices to prove that for any $z_1 > ...>z_k > r_{\sigma}$, there exists $\theta_1 \geq ...\geq \theta_k \geq 0$ such that $z_i$ is the unique solution of $\det(I_p - \theta R D(\theta,z)) =0$ on $[z_p, + \infty[$. We already know thanks again to the proof of the large deviations lower bound in \cite{Hu} that there is for every $z$, a $\theta$ such that $z$ is the largest solution. Let us prove that with Assumption \ref{Neg}, this solution is unique on $] r_{\sigma}, + \infty[$. First, one can notice that this assumption implies that the quadratic form whose matrix is $R$ has signature $(1, p -1)$ and so it is also true for the quadratic form whose matrix is $\sqrt{D(\theta,z)} R \sqrt{D(\theta,z)}$. Therefore, if we denote $\rho(\theta,z)$ the largest eigenvalue of $\sqrt{D(\theta,z)} R \sqrt{D(\theta,z)}$, the equation $\det(I_p - \theta_i R D(\theta_i,z)) =0$ is equivalent for $\theta >0$ and $z > r_{\sigma}$ to $\theta \rho(\theta,z) = 1$. Since $z \mapsto \rho(\theta,z)$ is strictly decreasing, the result is then proved. 
 
 For the continuous case, we can as in \cite[Section 6]{Hu} approximate our continuous variance profiles by piecewise constant ones. This approximation step is in fact easier than in the more general case of \cite{Hu} since if $\sigma$ satisfy Assumption \ref{Neg} then we can approximate $\bX_N^\sigma$ by the $\bX_N^{(p)}$ defined as follows : 

 \[ \bX_N^{(p)} = \sigma^{(p)}_N(i,j) \frac{X_{i,j}}{\sqrt{N}} \]
where 
$\sigma^{(p)}_N(i,j) = \sum_{k,l =1}^p \sigma^{(p)}_{k,l} \mathds{1}_{I^k_N \times I^l_N}(i,j)$ if 
 $I_N^1 = [0, N/p]$ and $I_N^i = ]N(i-1)/p, Ni/p]$ for $i = 2...p$ and 
\[ \sigma^{(p)}_{i,j} = \sqrt{ p^2 \int_{(i-1)/p}^{i/p} \int_{(j-1)/p}^{j/p} \sigma^2(x,y) dxdy }\,. \]
Since $\sigma$ satisfy Assumption \ref{Neg}, it is easy to check that $\sigma^ {(p)}$ also satisfies Assumption \ref{Neg} for all $p$ and therefore if we denote $\lambda^{(p),1}_N,...,\lambda^{(p),k}_N$ its $k$ largest eigenvalues, they satisfy a large deviations principle with rate function $I^{(p)}(x_1,...,x_p) = \sum I^ {(p)}(x_i)$. If we denote $\lambda^{1}_N,...,\lambda^{k}_N$ the $k$ largest eigenvalues of $\bX_N$, we have for all $i= 1,...,k$, $|\lambda^{i}_N - \lambda^{(p),i}_N| \leq ||\bX_N - \bX^{(p)}_N ||$. Using  \cite[Lemma 6.6]{Hu}, we have that $||\bX_N - \bX^{(p)}_N ||$ can be neglected  at exponential scale once $p$ is large enough. And using again \cite[Lemma 6.4 and Lemma 6.5]{Hu}, we have that the rate function converges toward the sum of the rate functions for one eigenvalue. Therefore, $\lambda^{1}_N,...,\lambda^{k}_N$ satisfy a large deviations principle and the rate function is the sum of the rate functions for one eigenvalue. 

\begin{remark}
Contrary to the Wigner case where we can see that asymptotically the positive and negative eigenvalues deviate independently from one another, this is not the case for matrices with variance profiles. An example is the linearization of a Wishart matrix where the negative eigenvalues are always exactly the opposite of the positive ones. 
\end{remark}

\subsection{ Large deviations for the $k$ largest eigenvalues  for the Gaussian ensembles with a $k$-dimensional perturbation}
We next prove proposition \ref{Wigper}. 
We first observe that the result is well known when $\bar\theta=0$, see e.g. Theorem \ref{thmuniv}.
We next remark that the joint law of the eigenvalues of $\bX_N^\theta$ is given by

$$d\mathbb P_N^\theta(\lambda)=\frac{1}{Z_N}\Delta(\lambda)^\beta\int \exp\{-\frac{\beta}{4}N\tr|UD(\lambda) U^*-\sum_{i=1}^{k}\theta_{i} e_{i}e_{i}^{*}|^2\}dU \prod_{1\le i\le N} d\lambda_i$$
where $U$ follows the Haar measure on the unitary group  (resp. the orthogonal group) when $\beta=2$ (resp, $\beta=1$).  $\Delta(\lambda)=\prod_{i<j} |x_i-x_j|$ is the Vandermonde determinant and $D(\lambda)$ is a diagonal matrix with entries given by $\lambda=(\lambda_1,\ldots,\lambda_N)$. Expanding the integral under the unitary (or orthogonal) group, we find that
$$d\mathbb P_N^\theta(\lambda)=\frac{1}{\tilde Z_N} \E_e[ e^{\frac{\beta}{2} N\sum_{i=1}^k \theta_i \langle e_i, D(\lambda) e\rangle }] d\mathbb P_N^0(\lambda)\,,$$
where $(e_1,\ldots,e_k)$ follows the uniform law on $k$ orthonormal vectors in dimension $N$. Hence the density is exactly given by the spherical integral. Using that Assumption \ref{ass} holds under $\mathbb P_N^0$ (see e.g \cite{GZ00}), we see that the empirical measure of $\lambda$ is close to the semi-circle law with overwhelming probability. Assume that $\theta_1\ge \theta_2\cdots\ge \theta_p\ge 0\ge\theta_{p+1}\cdots\ge\theta_k$. 
Then, on the set where the extreme eigenvalues $\lambda_N^N\ge \cdots\lambda_{N-p}^N$ and $\lambda^N_{1}\le \cdots\le\lambda^N_{k-p+1}$  are close to $x_{1}\ge x_{2}\ge\cdots\ge x_{p} \ge 2\ge -2\ge x_{-k+p}\ge \cdots \ge x_{-1}$, Theorem \ref{SIkdc} and Varadhan's Lemma give the result.

\subsection{ Large deviations for  $k$ extreme eigenvalues  for Gaussian Wishart matrices with a $k$-dimensional perturbation}
The proof of Proposition \ref{Wisper} is similar to the previous one. 
Again the proof is based on the explicit  joint law of $\lambda_{1}^{N,\gamma}\ge \lambda_{2}^{N,\gamma}\ge\cdots\lambda_{M}^{N,\gamma}$ given by the law on $(\mathbb R^+)^M$ 
$$d\mathbb P_{M,N}^{\bar \gamma}(d\lambda)=\frac{1}{Z_N}\Delta(\lambda)^\beta\int  e^{-\frac{\beta}{2}N\tr (U D(\lambda) U^* \Sigma^{-1})} dU \prod_{1\le i\le M}\lambda_i ^{\frac{\beta}{2}(N-M+1)}  d\lambda_i$$
Noticing that
$$\Sigma^{-1}=I+\sum_{i=1}^k \frac{\gamma_i}{1-\gamma_i} e_i e_i^*$$ we recognize again that the density with respect to the case $\gamma=0$ is given by a spherical integral. The result follows as for the Wigner case.

\section{Appendix}\label{app}
In this Appendix we investigate the continuity property of spherical integrals. First we need to prove the continuity of the deterministic limit itself :  
\begin{theorem}
	Let $d$ be a distance compatible with the weak topology on the set $\mathcal{P}(\R)$ and $ ||. ||$ any norm on $\R^{k+\ell}$, and for $M >0$, $\mathcal{K}_M$ the subset of $ E = \R^{k+\ell} \times (\R^{+})^k \times (\R^{-})^\ell \times \mathcal{P}(\R)$ defined by 
	\begin{eqnarray*}
		 \mathcal{K}_M := \{ (\bar{\lambda}, \bar{\theta}, \mu) \in E | M \geq \theta_1 \geq ...\geq \theta_k \geq 0 \geq \theta_{-\ell} \geq ... \geq \theta{-1} \geq  - M \\
		 M \geq \lambda_1 \geq ... \geq \lambda_k \geq r_{\mu} \geq l_{\mu} \geq \lambda_{-\ell} \geq ... \theta_{-1} \geq - M \}
		 \end{eqnarray*}
	 where $r_{\mu}$ and $l_{\mu}$ are respectively the rightmost and the leftmost point of the support of $\mu$. We endow $\mathcal{K}_M$ with the distance $D$ given by $D((\bar{\lambda}, \bar{\theta}, \mu), (\bar{\lambda}', \bar{\theta}', \mu') )= d(\mu, \mu') + || \bar{\lambda} - \bar{\lambda}'|| + ||\bar{\theta} - \bar{\theta}'||$. Then $\mathcal{K}_M$ is a compact set and the function $J$ 	 \[ J(\mu, \bar{\theta}, \bar{\lambda}) = \sum_{i= -\ell ,\neq 0}^k J( \mu, \theta_i,\lambda_i) \]
	 extends continuously  on $\mathcal{K}_M$. 
	 \end{theorem}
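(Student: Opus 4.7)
The plan is to (a) verify compactness of $\mathcal{K}_M$ by checking closedness of the defining constraints inside the compact space $[-M,M]^{2(k+\ell)}\times\mathcal{P}([-M,M])$, and (b) extend and prove the continuity of the single-term function $J(\mu,\theta,\lambda)$ on its respective domain (from which the statement for the sum follows by adding finitely many continuous functions). The main work lies in controlling the boundary behavior at $\theta=0$, at $\lambda=r_\mu$, and at the transition $G_\mu(\lambda)=\theta$.

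For (a), $\mathcal{P}([-M,M])$ is weakly compact by Prokhorov, and the ordering constraints on $\bar\theta,\bar\lambda$ are clearly closed. The remaining constraint $\lambda_i\geq r_\mu$ (and symmetrically for $l_\mu$) is closed because $\mu\mapsto r_\mu$ is lower semi-continuous on $\mathcal{P}([-M,M])$: if $r<r_\mu$, then $\mu((r,\infty))>0$ and the Portmanteau theorem gives $r_{\mu_n}>r$ eventually, so $r_\mu\leq\liminf_n r_{\mu_n}$; thus $\lambda_i^n\geq r_{\mu_n}$ passes to the limit along any weakly convergent sequence in $\mathcal{K}_M$.

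For (b), I would first establish continuity on the interior region $\theta>0$, $\lambda>r_\mu$, by working directly with $J=K(\mu,\theta,\lambda,v(\mu,\theta,\lambda))$. The function $v$ is continuous because both branches are individually continuous (the branch $v=\lambda$ trivially, and $v=G_\mu^{-1}(\theta)$ because $G_\mu$ is strictly decreasing and jointly continuous in $(\mu,v)$ on $\{v>r_\mu\}$), and they agree across the transition $G_\mu(\lambda)=\theta$ where both yield $v=\lambda$. Then $K$ is continuous on this region because $G_\mu(v)$ and $\int\ln|v-x|d\mu(x)$ are continuous in $(\mu,v)$ whenever $v>r_\mu$ (note $|v-x|\leq 2M$ gives an upper bound and $v-r_\mu>0$ bounds things away from the logarithmic singularity), using weak convergence against the continuous bounded integrand.

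The main obstacle is the boundary analysis. For $\theta\downarrow 0$ at fixed $\lambda$, one has $v=G_\mu^{-1}(\theta)\to+\infty$, and I would expand $G_\mu(v)=v^{-1}+m_1(\mu)v^{-2}+O(v^{-3})$ and $\int\ln|v-x|d\mu(x)=\ln v-m_1(\mu)v^{-1}+O(v^{-2})$ uniformly over $\mu\in\mathcal{P}([-M,M])$; this gives $-\ln\theta-\int\ln|v-x|d\mu(x)=O(v^{-2})$ and $(v-\lambda)G_\mu(v)\to 1$, so $K\to 0$ and one extends by $J(\mu,0,\lambda):=0$. The trickier cases are $\lambda\downarrow r_\mu$, and sequences along which $v_n\downarrow r_\mu$, where the logarithmic potential may degenerate. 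A clean way to handle these is to use the discretization $\mu^\varepsilon$ built in Section \ref{dif}, for which $J(\mu^\varepsilon,\theta,\lambda)$ is continuous by the explicit formula of Lemma \ref{formulaI}; combined with the equicontinuity estimate of Remark \ref{SICont} and the density of such $\mu^\varepsilon$ in $\mathcal{P}([-M,M])$, letting $\varepsilon\to 0$ transfers continuity from the discrete setting to arbitrary $\mu$. Compactness of $\mathcal{K}_M$ finally upgrades pointwise continuity at boundary points to the continuous extension claimed in the theorem.
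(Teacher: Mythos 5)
Your compactness argument via the lower semi-continuity of $\mu\mapsto r_\mu$ is correct, and your direct interior continuity argument (using the explicit $v$-formula and noting that $v$ is continuous across the transition $G_\mu(\lambda)=\theta$) is a valid alternative to what the paper does. Your $\theta\downarrow 0$ expansion showing $K\to 0$ is also sound. The paper instead first reduces to $\theta=1$ via the scaling $J(\mu,\theta,\lambda)=J(\theta*\mu,\theta\lambda,1)$, then uses the $1$-Lipschitz property of $\lambda\mapsto J(\mu,\lambda,1)$ to replace $\lambda_n$ and $\lambda$ by $\lambda+\epsilon$ (strictly above all $r_{\mu_n}$ for $n$ large), and finally integrates the explicit derivative $\partial_\lambda J(\mu,\lambda,1)$, which converges uniformly on $[\lambda+\epsilon,\infty)$, back from a large anchor point $\Lambda$ where weak convergence trivially gives pointwise convergence. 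This trick avoids ever approaching the set $v=r_\mu$ where the log-potential degenerates.

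The gap in your proposal is precisely at that boundary. You propose to handle $\lambda\downarrow r_\mu$ (or more generally $v_n\downarrow r_\mu$) by discretizing to $\mu^\varepsilon$ and invoking Remark~\ref{SICont} plus density of discrete measures to ``transfer continuity.'' This does not close. Remark~\ref{SICont} gives $\beta/2$-Lipschitz continuity of the spherical integral in the \emph{eigenvalue locations} $(\eta_i)$ with $\|\cdot\|_\infty$, i.e.\ in an operator-norm/$W_\infty$ sense, for \emph{fixed multiplicities}. Weak convergence $d(\mu_n,\mu)\to 0$ does not control $W_\infty(\mu_n,\mu)$: two measures can be weakly close while their support structures differ drastically, and nothing in Remark~\ref{SICont} controls the dependence of $J$ on the multiplicity weights $\alpha_i$. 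So you do not in fact have equicontinuity of $\mu\mapsto J(\mu,\theta,\lambda)$ in the metric $d$, and the limit exchange as $\varepsilon\to 0$ is exactly what needs to be proved. There is also a latent circularity: the passage $J(\mu^\varepsilon,\theta,\lambda)\to J(\mu,\theta,\lambda)$ used in the proof of Proposition~\ref{SIkdc} already appeals to the very Appendix statement you are trying to establish. Finally, the closing remark that ``compactness of $\mathcal{K}_M$ upgrades pointwise continuity at boundary points to a continuous extension'' is not a valid general principle; compactness gives you uniform continuity once you already have continuity, not the other way around. To repair the boundary case you would need an argument in the spirit of the paper's: exploit the uniform (in $\mu$) Lipschitz bound $|\partial_\lambda J(\mu,\cdot,1)|\leq 1$ to move off the boundary before taking the $\mu$-limit, rather than trying to control the log-potential directly as $v\downarrow r_\mu$.
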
 
\begin{proof}
	It is clear that we only need to prove the continuity of $(\theta, \lambda,\mu) \mapsto J( \mu,\theta,\lambda)$ where either $\theta \geq 0$ and $\lambda \geq r_{\mu}$ or $\theta \leq 0$ and $\lambda \leq l_{\mu}$. We assume without loss of generality that we are in the first case. Furthermore since $J(\mu, \theta, \lambda) = J( \theta* \mu, \theta \lambda, 1)$ we only need to prove the continuity for the first two arguments with the third being fixed equal to $1$. Let us take a sequence $( \mu_n, \lambda_n)$ such that $\forall n \in \N, l_{\mu} \geq - M, r_{\mu_n} \leq \lambda_n$ and $\lim \lambda_n = \lambda$ and $\lim_n \mu_n = \mu$. First, since $|J(\mu_n, \lambda_n,1) - J(\mu_n, \lambda + \epsilon,1)| \leq |\lambda_n - \lambda| + \epsilon$ for $n$ large enough so that $\lambda +\epsilon \geq r_{\mu_n}$, and $|J(\mu, \lambda,1) - J(\mu_n, \lambda + \epsilon,1)| \leq \epsilon$ we can restrict ourselves to proving $\lim J(\mu_n, \lambda + \epsilon,1) = J(\mu, \lambda + \epsilon,1)$. But, when we differentiate $J(\mu,\lambda,1)$ on the variable $\lambda$, we find
	\[ \frac{\partial}{\partial \lambda} J(\mu, \lambda,1) = 1 - \mathds{1}_{ [ G_{\mu}^{-1}(1), + \infty[} G_{\mu}(\lambda)- \mathds{1}_{ ] - \infty,  G_{\mu}^{-1}(1)[} G_{\mu}^{-1}(1) \]
On $[\lambda + \epsilon, + \infty [$, since $r_{\mu_n} \leq \lambda + \epsilon $ it is in fact easy to see that the weak convergence of $\mu_n$ imply the uniform convergence of $\partial/ \partial \lambda J( \mu_n, \lambda,1)$. The we conclude by choosing $\Lambda > \lambda + \epsilon$ so that $G_{\mu}( \Lambda) \leq 1/2$ so that $v(\mu_n,1, \Lambda) = \Lambda$ for $n$ large enough and then using the weak convergence and the fact that $ x \mapsto \log(\Lambda - x)$ is bounded on $[-M, \lambda + \epsilon]$, we have that $J( \mu_n, 1, \Lambda)$ converges toward $J( \mu,1, \Lambda)$.
	\end{proof}
With this continuity and the compactness of $\mathcal{K}_M$, we can prove the following theorem of uniform continuity, which generalizes \cite{Ma07} : 
 
\begin{theorem}\label{unifconv}
	Let $k,\ell \in \N$, $\bar{\theta} \in (\R^{-})^\ell \times(\R^{+})^k$ and 
	\[ J_N( X_N, \bar{\theta}) = \frac{1}{N} \log \E \Big[ \exp\Big( \frac{\beta N}{2} \sum_{i = -\ell, i \neq 0}^k \theta_i \langle e_i, \bX_N e_i \rangle \Big) \Big] \]
	
	 Let us denote $\lambda^N_{-1} \leq ... \leq \lambda^{N}_{-\ell}$ the smallest outliers of $\bX_N$ and $\lambda^N_{1} \geq ... \geq \lambda^{N}_{k}$ the largest outliers. We will denote this $k+\ell$-uplet $\bar{\lambda}_N$. Then for every $M >0$ and $\epsilon >0$, there is $N_0 \in \N$ so that
     for every $N \geq N_0$, for any  matrix $\bX_N$ such that $(\bar{\lambda}_N, \bar{\theta}, \mu_{\bX_N}) \in \mathcal{K}_M$
 \[   | J_N(\bX_N, \bar{\theta})  - J(\mu_{\bX_N}, \bar{\lambda}_N, \bar{\theta}) | \leq \epsilon \]
	\end{theorem}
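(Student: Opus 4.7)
The plan is a standard compactness-plus-pointwise-convergence argument, reducing the uniform statement to the already-established pointwise asymptotics from Proposition~\ref{SIkdc}, the continuity of $J$ on $\mathcal{K}_M$ established just above, and a uniform Lipschitz estimate in the parameter $\bar\theta$.

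First, I would establish a deterministic Lipschitz bound: for any Hermitian $\bX_N$ and any $\bar\theta,\bar\theta'$,
\[ |J_N(\bX_N,\bar\theta)-J_N(\bX_N,\bar\theta')|\le \tfrac{\beta}{2}\|\bX_N\|\sum_{i=-\ell,i\ne 0}^{k}|\theta_i-\theta'_i|, \]
obtained by differentiating along the line $t\mapsto J_N(\bX_N,(1-t)\bar\theta+t\bar\theta')$ and noting that the derivative equals $\tfrac{\beta}{2}\sum(\theta'_i-\theta_i)\mathbb{E}^{\mathrm{tilt}}[\langle e_i,\bX_N e_i\rangle]$, which is bounded by $\tfrac{\beta}{2}\|\bX_N\|\sum|\theta_i-\theta'_i|$. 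Any matrix $\bX_N$ whose data $(\bar\lambda_N,\bar\theta,\hat\mu_{\bX_N})$ lies in $\mathcal{K}_M$ has operator norm at most $M$ (its largest and smallest eigenvalues are the outliers $\lambda^N_1$ and $\lambda^N_{-1}$, both confined to $[-M,M]$). So the Lipschitz constant is uniform over $\mathcal{K}_M$.

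I would then argue by contradiction: if the theorem fails, there exist $\epsilon>0$ and a sequence of matrices $\bX_{N_j}$ with $N_j\to\infty$ and data $(\bar\lambda_{N_j},\bar\theta_j,\hat\mu_{\bX_{N_j}})\in\mathcal{K}_M$ such that
\[ |J_{N_j}(\bX_{N_j},\bar\theta_j)-J(\hat\mu_{\bX_{N_j}},\bar\lambda_{N_j},\bar\theta_j)|>\epsilon. \]
By compactness of $\mathcal{K}_M$, extract a subsequence (kept with the same index) along which $(\bar\lambda_{N_j},\bar\theta_j,\hat\mu_{\bX_{N_j}})\to(\bar\lambda,\bar\theta,\mu)\in\mathcal{K}_M$. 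Continuity of $J$ on $\mathcal{K}_M$ (the first theorem of this appendix) gives $J(\hat\mu_{\bX_{N_j}},\bar\lambda_{N_j},\bar\theta_j)\to J(\mu,\bar\lambda,\bar\theta)$. Now the sequence $\bX_{N_j}$ itself satisfies exactly the hypotheses of Proposition~\ref{SIkdc} for the fixed target parameter $\bar\theta$: its empirical spectral measure converges weakly to $\mu$ and its top-$k$ / bottom-$\ell$ eigenvalues converge to $\bar\lambda$. Therefore $J_{N_j}(\bX_{N_j},\bar\theta)\to J(\mu,\bar\lambda,\bar\theta)$. Combining with the Lipschitz estimate of the first paragraph,
\[ |J_{N_j}(\bX_{N_j},\bar\theta_j)-J_{N_j}(\bX_{N_j},\bar\theta)|\le \tfrac{\beta M}{2}(k+\ell)\|\bar\theta_j-\bar\theta\|_\infty\to 0, \]
so $J_{N_j}(\bX_{N_j},\bar\theta_j)\to J(\mu,\bar\lambda,\bar\theta)$ as well. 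Together with the convergence from continuity of $J$, this contradicts the choice of the sequence.

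The main obstacle I anticipate is the boundary case in the limit: Proposition~\ref{SIkdc} is stated under the strict separation $\lambda_i>r_\mu$ (and $\lambda_{-i}<l_\mu$), whereas the compact set $\mathcal{K}_M$ only imposes $\lambda_k\ge r_\mu$ and $\lambda_{-\ell}\ge l_\mu$, so a limit point can have an outlier sitting at the edge of the bulk. To handle this I would perturb the limit by replacing $\bar\lambda$ with $\bar\lambda+\delta\mathbf{1}$ for the positive outliers and $-\delta\mathbf{1}$ for the negative ones, apply Proposition~\ref{SIkdc} to a correspondingly perturbed matrix $\bX_{N_j}+\delta P_j$ where $P_j$ is the spectral projector on the top-$k$ and bottom-$\ell$ eigenspaces, use the Lipschitz estimate in $\bar\theta$ (and an analogous one in the outlier coordinates $\bar\lambda$ of $J_N$, obtained from $\|\bX_N-(\bX_N+\delta P_j)\|\le\delta$) to control the error by $O(\delta)$, and finally let $\delta\to 0$ using continuity of $J$ at the boundary. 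All other continuity/uniformity issues are folded into the compactness step.
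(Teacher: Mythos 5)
Your proposal is correct and reaches the conclusion, but it is organized quite differently from the paper's proof, so let me compare. The paper argues directly: it re-uses the discretization $\bX_N^\delta$ from the proof of Proposition~\ref{SIkdc}, observes that $\|\bX_N-\bX_N^\delta\|\le\delta$ gives a uniform-in-$N$ approximation of $J_N$, and that $J_N(\bX_N^\delta,\bar\theta)$ depends only on the data $(\bar\lambda_N,\hat\mu_{\bX_N})$; combined with compactness of $\mathcal{K}_M$ and continuity of the limit $J$ established in the preceding appendix theorem, the uniform convergence follows without any explicit subsequence extraction. Your proof instead treats Proposition~\ref{SIkdc} as a black box and runs a standard compactness--contradiction argument, supplying the required equicontinuity via explicit Lipschitz bounds: in $\bar\theta$ with constant $\tfrac{\beta}{2}\|\bX_N\|$, and in the matrix argument with constant $\tfrac{\beta}{2}\sum|\theta_i|$. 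This is a valid and arguably more modular route; it also highlights a point the paper's write-up glosses over, namely that the limit point $(\bar\lambda,\bar\theta,\mu)\in\mathcal{K}_M$ can sit on the boundary $\lambda_k=r_\mu$ or $\lambda_{-\ell}=l_\mu$, where Proposition~\ref{SIkdc} as stated does not directly apply. Your perturbation fix is the right idea; one small slip: the perturbing operator should be $\delta(P_j^+ - P_j^-)$, where $P_j^+$ projects on the top-$k$ and $P_j^-$ on the bottom-$\ell$ eigenspaces, so that the bottom outliers move \emph{away} from the bulk (writing $\bX_{N_j}+\delta P_j$ with $P_j$ the projector on both pushes the bottom outliers \emph{into} the bulk). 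With that correction, $\|\bX_{N_j}-(\bX_{N_j}+\delta(P_j^+-P_j^-))\|=\delta$, and your Lipschitz-in-the-matrix estimate together with continuity of $J$ closes the argument as you describe. Note finally that since $\bar\theta$ is fixed in the statement, the Lipschitz bound in $\bar\theta$ is not actually needed for this theorem, though it costs nothing and would be required if one wanted the stronger statement uniform over $\bar\theta$ in a ball as well.
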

\begin{proof} We first notice that in the proof of Proposition \ref{SIkdc} we approximated $J_N(\bX_N, \bar{\theta}) $ by $J_N(\bX_N^\varepsilon, \bar{\theta})$ with an error depending only on $\varepsilon$. Hence we may and shall replace in the above statement $\bX_N$ by $\bX_N^\delta$ for some small enough $\delta=\delta(\epsilon)$. $\bX_N^\delta$ has the same extreme eigenvalues than $\bX_N$ and otherwise eigenvalues $\lambda_{-\ell}^N+j\delta$ with multiplicity $\lfloor N\hat\mu^N([\lambda_{-\ell}^N+j\delta,\lambda_{-\ell}^N+(j+1)\delta])\rfloor$. Therefore, we see that $J_N(\bX_N, \bar{\theta}) $ is a function of the extreme eigenvalues and the empirical measure, hence a function on $\mathcal K_{N}$.  By the previous uniform approximation and the continuity of the limit, we deduce that it is uniformly continuous on $\mathcal{K}_{N}$, hence the result.
	\end{proof}

\bibliographystyle{amsplain}

\bibliography{bibAbel2}
\end{document}